\numberwithin{equation}{section}
\newtheorem{thm}{Theorem}[section]
\newtheorem{prop}[thm]{Proposition}
\newtheorem{lemm}[thm]{Lemma}
\newtheorem{cor}[thm]{Corollary}
\newtheorem{defi}[thm]{Definition}
\newtheorem{eg}[thm]{Example}
\newtheorem{question}[thm]{Question}
\newtheorem{rmk}[thm]{Remark}
\newcommand{\scr}[1]{\mathscr{#1}}
\newcommand{\bb}[1]{\mathbb{#1}}
\newcommand{\Sp}{\mathscr{S}_+}
\newcommand{\Spc}{\mathscr{S}_+^{\circ}}
\newcommand{\Qnn}{\mathbb{Q}_{\geq 0}}
\newcommand{\Rnn}{\mathbb{R}_{\geq 0}}
\newcommand{\Iff}{\Longleftrightarrow}
\DeclareMathOperator{\Sign}{Sign}
\DeclareMathOperator{\supp}{supp}
\DeclareMathOperator{\SC}{SC}
\DeclareMathOperator{\sz}{sz}
\newcommand{\plus}{\makebox[12pt]{$+$}}
\newcommand{\zero}{\makebox[12pt]{$0$}}
\newcommand*{\minus}{\makebox[12pt]{$-$}}
\newcommand{\floor}[1]{\lfloor#1\rfloor}
\newcommand{\ceil}[1]{\lceil#1\rceil}
\newcommand*{\poly}{\mathit{poly}}
\newcommand*{\expt}{\mathit{exp}}
\begin{document}

\title{On the Complexity of Interpolation by Polynomials with Non-negative Real Coefficients}

\author{Katsuyuki BANDO\footnote{kbando@ms.u-tokyo.ac.jp}, 
Eitetsu KEN\footnote{yeongcheol-kwon@g.ecc.u-tokyo.ac.jp}, \&
Hirotaka ONUKI\footnote{onuki-hirotaka010@g.ecc.u-tokyo.ac.jp}}








\maketitle
\begin{abstract}
In this paper, we consider interpolation by \textit{completely monotonous} polynomials (CMPs for short), that is, polynomials with non-negative real coefficients.
In particular, given a finite set $S\subset \bb{R}_{>0} \times \Rnn$, we consider \textit{the minimal polynomial} of $S$, introduced by Berg [1985], which is `minimal,' in the sense that it is eventually majorized by all the other CMPs interpolating $S$.
We give an upper bound of the degree of the minimal polynomial of $S$ when it exists. 
Furthermore, we give another algorithm for computing the minimal polynomial of given $S$, which utilizes an order structure on sign sequences. 
Applying the upper bound above, we also analyze the computational complexity of algorithms for computing minimal polynomials including ours.
%
%
\end{abstract}

\section{Introduction}
By celebrated Descartes' rule of signs, we have that each univariate polynomial with non-negative real coefficients (sometimes called \textit{completely monotonous} polynomial (CMP)) is uniquely characterized among others as an interpolation polynomial of an appropriate finite set of points.
\begin{eg}
The polynomial $f(X)=X^{100}$ is a unique CMP interpolating $S=\{(1,1),(2,2^{100}),(3,3^{100})\}$.
Indeed, if $g\in \Rnn[X]\setminus \{X^{100}\}$ interpolates $S$, the polynomial $g-X^{100}$ has three positive roots $1,2,3$, which contradicts Descartes' rule of signs since the number of sign changes of $g-X^{100}$ is at most two.
\end{eg}
This is a radically different situation compared with the case of polynomials with unrestricted real coefficients.
A further interesting point is that the minimum number of points we need to identify a CMP $f$ is related to the number of the terms of $f$ rather than the degree of $f$.
(See Corollary \ref{meaningofmu} for the details.)

A line of research on the reverse direction, that is, given a finite set $S=\{(a_{i},b_{i})\}_{i=1}^{n} \subset \bb{R}_{>0} \times \Rnn$, finding the unique CMP $f$ interpolating $S$ if such a polynomial exists was essentially initiated by \cite{firstminimalpolynomial} and further developed in \cite{minimalpolynomials} and related works.
After those, \cite{Interpolationbypolynomialswithnonnegativecoefficients} independently considered the problem in the context of feedback control systems.

\cite{firstminimalpolynomial} introduced the notion of \textit{minimal polynomials} (of given $S$), and it is essential when we consider the interpolation problem above; $f$ is a unique CMP interpolating $S$ if and only if it is a minimal polynomial of some $T \subsetneq S$. (cf. Corollary \ref{uniqueCMPisminimalpolynomial}). \cite{firstminimalpolynomial} gave a concrete algorithm to compute minimal polynomials in a special case,
\cite{Thielcke} gave two heuristics: one using linear programming and the other without it, and \cite{minimalpolynomials} established the theoretical background of \cite{Thielcke}.
Furthermore, \cite{Interpolationbypolynomialswithnonnegativecoefficients} took a more geometrical approach and gave another algorithm deciding the existence of interpolating CMP's by exploring the surface hyperplanes of a \textit{clam}, a particular convex set depending on $S$.

However, the computational complexity of these algorithms, which is related to degree upper bounds for $f$ above, has not been analyzed.
It is natural to refer to results on sparse interpolation since $f$ should be $\# S$-sparse if $f$ is the minimal polynomial of $S$.
Indeed, when we consider the case when $f$ is the unique CMP interpolating $S$, then $f$ is $(\# S -1)$-sparse, and the degree upper bound given in Theorem 4.5 in \cite{degreeupperboundforsparseinterpolation} can be applied.
Nevertheless, it is still unclear whether this degree upper bound can be applied to the minimal polynomials which are precisely $\# S$-sparse. 
It is beyond the scope of the theory of general sparse interpolation since it is trivial to find an $\# S$-sparse polynomial in $\mathbb{R}[X]$ interpolating $S$.

In this paper, we focus on analyzing minimal polynomials and give a degree upper bound for them, described in terms of the size of a reasonable representation of the input $S$ (Theorem \ref{discreteupperbound}). 
As an application, we evaluate the computational complexity of simple algorithms computing minimal polynomials by linear programming (Theorem \ref{thm:complexity-of-algo-by-LP}).
Furthermore, we give a new linear-programming-free algorithm for finding minimal polynomials which works along a particular order structure on sign sequences (Algorithm \ref{findtheminimalpolynomial}).  
We evaluate its complexity, too (Theorem \ref{complexityoffindtheminimalpolynomial}).
This algorithm is somehow similar to the one in \cite{Interpolationbypolynomialswithnonnegativecoefficients}, but ours are more refined so that the relation to the order structure is clearer.
We believe that this algorithm is theoretically meaningful rather than practically since it reveals the structure of minimal polynomials by developing the theory of sign sequences.

The article is organized as follows:

In section \ref{Preliminaries}, we set up our notation, review some of the known results on Descartes' rule of signs, and locate the work of \cite{minimalpolynomials} on minimal polynomials in our context.

In section \ref{A degree upper bound for minimal polynomials}, we prove our main result, a degree upper bound for minimal polynomials, and give a sketch of the evaluation of the complexity of simple algorithms using linear programming to find minimal polynomials.

In section \ref{An order structure of sign sequences}, we introduce an order structure on sign sequences, which is helpful in describing manipulations of CMPs relying on Descartes' rule of signs and its optimality.

In section \ref{An algorithm}, we present another linear-programming-free algorithm for finding minimal polynomials, which searches the object based on the order structure introduced in section \ref{An order structure of sign sequences}.
We also mention the relation to the algorithm in \cite{Interpolationbypolynomialswithnonnegativecoefficients}.

In section \ref{Open Problems}, we cast some open problems which clarify the precise location of our work in the course of research.

\section{Preliminaries}\label{Preliminaries}

First, we set our notation:

\begin{itemize}
\item $\bb{R}$ denotes the set of real numbers, and $\bb{Q}$ denotes the set of rational numbers. 
When we consider algorithms, since we want the inputs and outputs to have finite expressions, we restrict the 
coefficients of polynomials to be rational.
However, the theory behind those algorithms applies to the case of real coefficients, so we use $\bb{R}$ for generality, too.

\item $\omega$ denotes the set of natural numbers (including $0)$.
\item For $n, m \in \omega$, the interval $\{x \in \omega \mid n \leq x \leq m\}$ (which might be empty) is denoted by $[n,m]$. $[1,m]$ is also denoted by $[m]$.

\item Let $\bb{R}_{>0}:=\{x\in \bb{R}\mid x>0\}$ and $\bb{R}_{\geq 0}:=\{x\in \bb{R}\mid x\geq 0\}$.
We also define $\bb{Q}_{>0}$ and $\bb{Q}_{\geq 0}$ similarly.
\item For a set $A$, its cardinality is denoted by $\# A$. 
\item For a sequence $s=(s_{i})_{i \in \omega} \in \{-,0,+\}^{\omega}$ of signs, \textit{the support of $s$}, denoted by $\supp(s)$, is defined by;
\[
\supp(s):=\{i\in \omega\mid s_i\neq 0\}\subseteq \omega.
\]

\item The set $\scr{S}$ of finite sequences of signs is defined by;
\[
\scr{S}:=\left\{s \in \{-,0,+\}^{\omega} \mathrel{}\middle|\mathrel{} \# \supp(s) < \infty \right\}.
\]

For $s \in \scr{S}$, the degree $\deg(s)$ of $s$ is defined by;
\[
\deg(s):=\max\supp(s) \in \omega \cup \{-\infty\},
\]
where we set $\max \emptyset = -\infty$.

We write an element $s=(s_i)_{i\in \omega}\in \scr{S}$ by
\[
s_0\cdots s_d \ \mbox{or} \ (s_{0}\cdots s_{d})
\]
if $\deg(s) \leq d$.
\item 
Define 
\[
\Sign\colon \bb{R}[X]\to \scr{S} ;\; \sum_{i=0}^\infty a_iX^i\mapsto (\Sign(a_i))_{i\in \omega},
\]
where $\Sign(a_i)\in \{-,0,+\}$ is the sign of $a_i\in \bb{R}$.

Note that $\deg(\Sign(f(X))) = \deg(f(X))$ (we adopt a convention $\deg(0) = -\infty$).
\item Put $\scr{S}_+:=\scr{S}\cap\{0,+\}^{\omega}$.
Note that the restriction of $\supp$ to $\Sp$: 
\[\supp\colon \Sp \to \{S\subset \omega\mid \#S<\infty\}\]
is a bijection.
\item $\Rnn[X]$ denotes the set of polynomials with non-negative real coefficients (\textit{completely monotonous polynomials}, or \textit{CMP}s in short), that is, 
\[
\Rnn[X]:=\left\{f\in \bb{R}[X] \mathrel{}\middle|\mathrel{} \Sign(f)\in \scr{S}_+ \right\}.
\]
Note that each $f(X) \in \Rnn[X] \setminus \Rnn$ gives a strictly increasing function on $\bb{R}_{>0}$.
We also define $\Qnn[X]$ similarly.

\item $\SC(t)$ is the number of sign changes in $t$, defined by
\[
\SC(t):=\#\left\{(i,j)\in \omega^2 \mathrel{}\middle|\mathrel{}
 \begin{array}{l}
i<j\\
\{t_i,t_j\}=\{-,+\} \\
t_k=0\text{ for any $k\in \left[i+1,j-1\right]$}
\end{array}\right\}.
\]

\end{itemize}

The following well-known Descartes' rule of signs is crucial in this work:
\begin{thm}
Let $f \in \bb{R}[X] \setminus\{0\}$ and $p$ be the number of positive roots of $f$ counting their multiplicities. 
Then $(\SC(\Sign(f))-p)$ is an even non-negative integer. 
In particular, 
 \[p \leq \SC(\Sign(f)).\]
\end{thm}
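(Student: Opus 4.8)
The plan is to prove the theorem by stripping off the positive roots of $f$ one at a time, reducing the whole statement to an elementary fact about how multiplying by a linear factor $(X-a)$, $a>0$, changes a sign sequence.

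First I would make two harmless reductions. Writing $f(X)=X^{k}\wt f(X)$ with $\wt f(0)\neq 0$ changes neither $\SC(\Sign(f))=\SC(\Sign(\wt f))$ nor the number $p$ of positive roots, so we may assume $f(0)\neq 0$. Then I would factor $f(X)=q(X)\prod_{i=1}^{p}(X-\alpha_i)$, where $\alpha_1,\dots,\alpha_p>0$ are the positive roots of $f$ listed with multiplicity and $q\in\bb{R}[X]$ has no positive root; note $q(0)\neq 0$.

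The core is the following lemma: if $a>0$ and $g\in\bb{R}[X]\setminus\{0\}$ has $g(0)\neq 0$, then $\SC(\Sign((X-a)g))-\SC(\Sign(g))$ is a positive odd integer. Write $g=\sum_{i=0}^{d}b_iX^{i}$ (so $b_0,b_d\neq 0$) and $h:=(X-a)g=\sum_{i=0}^{d+1}c_iX^{i}$, so $c_i=b_{i-1}-ab_i$ with $b_{-1}=b_{d+1}=0$; in particular $c_0=-ab_0$ has the opposite sign to $b_0$ and $c_{d+1}=b_d$. Since a sign sequence with nonzero first and last entries has an even number of sign changes precisely when those two entries coincide, $\SC(\Sign(h))$ and $\SC(\Sign(g))$ have opposite parity. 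For the inequality $\SC(\Sign(h))\geq\SC(\Sign(g))$ --- which carries essentially all the content --- I would solve the recursion to get $b_i=-a^{-(i+1)}\sum_{k=0}^{i}a^{k}c_k$, so that $\Sign(b_i)$ is the opposite of the sign of the partial sum $P_i:=\sum_{k=0}^{i}a^{k}c_k$; hence $\SC(\Sign(g))$ equals the number of sign changes of $(P_0,\dots,P_d)$. Because $P_{d+1}=h(a)=0$, this is also the number of sign changes of the full partial-sum sequence of $(a^0c_0,\dots,a^{d+1}c_{d+1})$, and an elementary fact finishes it: the sequence of partial sums of a finite real sequence has no more sign changes than the sequence itself (inside a maximal block of same-sign summands the partial sums are monotone, hence change sign at most once there, and the first block contributes no sign change since the partial sums start from $0$). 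As the signs of $(a^kc_k)_k$ agree with those of $(c_k)_k$, this last number is $\SC(\Sign(h))$, proving the inequality; together with the parity statement, the difference is a positive odd integer.

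Finally, applying the lemma successively to $q,\ q(X-\alpha_1),\ q(X-\alpha_1)(X-\alpha_2),\dots$ gives $\SC(\Sign(f))=\SC(\Sign(q))+(\text{a sum of }p\text{ positive odd integers})$, so $\SC(\Sign(f))\geq\SC(\Sign(q))+p$ and $\SC(\Sign(f))\equiv\SC(\Sign(q))+p\pmod 2$. Since $q$ has no positive root and $q(0)\neq 0$, $q$ keeps a constant sign on $(0,\infty)$, so $\Sign(q(0))$ equals the sign of the leading coefficient of $q$; hence $\SC(\Sign(q))$ is even. Therefore $\SC(\Sign(f))-p$ is even and non-negative, which is the assertion. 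The one genuinely nontrivial step is the inequality $\SC(\Sign((X-a)g))\geq\SC(\Sign(g))$; all the rest is bookkeeping. (A standard alternative avoids the partial-sum lemma by inducting on $\deg f$ through $f'$: one uses that $\Sign(f')$ is $\Sign(f)$ with its constant term deleted, Rolle's theorem in the form ``$\#\{\text{positive roots of }f'\}\geq p-1$'', and the parity fact that the number of positive roots of a polynomial with nonzero constant term is congruent mod $2$ to $[\Sign(\text{constant coefficient})\neq\Sign(\text{leading coefficient})]$.)
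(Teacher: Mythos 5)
The paper states Descartes' rule of signs as a well-known result and gives no proof of it, so there is no in-paper argument to compare yours against; I can only assess your proposal on its own terms. Your proof is correct. The reduction (strip off $X^k$, factor $f=q\prod(X-\alpha_i)$ with $q$ having no positive root, and track the effect of multiplying by a single linear factor $(X-a)$, $a>0$) is sound, and the recursion $b_i=-a^{-(i+1)}\sum_{k\le i}a^kc_k$ correctly converts $\SC(\Sign(g))$ into the sign-change count of the partial sums of $(a^kc_k)_k$, which you then compare to $\SC(\Sign(h))$ via the partial-sum lemma. That lemma is the real content, and your justification of it works: the partial sums are piecewise strictly monotone, one monotone piece per maximal same-sign block of summands, so each block contributes at most one sign change, and the leading block contributes none because it starts from zero. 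One small presentational point: since $P_0=c_0\neq 0$, the phrase ``the partial sums start from $0$'' is best made precise by prepending a sentinel $P_{-1}=0$ (a leading zero does not alter $\SC$), after which the claim that the first block contributes no sign change is literal. The parity bookkeeping (first and last nonzero coefficients determine the parity of $\SC$; $\SC(\Sign(q))$ is even since $q$ is nonvanishing on $[0,\infty)$) is also correct, and combined with the inequality you get that each multiplication by $(X-a)$ increases $\SC$ by a positive odd amount, which yields the theorem. The alternative you sketch in parentheses (inducting through $f'$ using Rolle and the fact that $\Sign(f')$ is $\Sign(f)$ with the constant term removed) is the other standard route; either would serve as a proof the paper chose to omit.
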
 

Furthermore, it is also known that the bound above is optimal in the following sense:

\begin{thm}[\cite{AlbouyFu}]
Let $M$ be a finite multiset of positive reals and $\sigma\in \scr{S}$.
If $(\SC(\sigma) - \#M)$ is an even non-negative integer, then there exists $f \in \bb{R}[X]$ such that:
\begin{itemize}
 \item The set of all positive roots (formalized as a multiset reflecting their multiplicities) of $f$ is precisely $M$,
 \item $\Sign(f) = \sigma$.
\end{itemize}
\end{thm}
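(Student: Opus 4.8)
The plan is to build $f$ by induction on $\deg(\sigma)$, peeling off one linear or quadratic factor at a time. First I would normalise: since $\SC$, the positive-root multiset, and the realisability asked for are all unchanged under factoring out a power of $X$ (replace $\sigma$ by the sequence obtained after deleting its leading zeros, and $f$ by $f/X^{m}$) and under $f\mapsto -f$, I may assume $\sigma\neq 0$ (the only interesting case) and $s_{0}=+$, so in particular $s_{0}\neq 0$. Write $v:=\SC(\sigma)$ and $p:=\#M$; by hypothesis $v\ge p$, $v\equiv p\pmod 2$, and always $v\le\deg(\sigma)$. For the base case, if $v=0$ then $p=0$ and $\sigma\in\Sp$, so $f:=\sum_{i\in\supp(\sigma)}X^{i}\in\Rnn[X]$ works, a nonzero completely monotonous polynomial having no positive root; this also settles $\deg(\sigma)=0$.

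For the inductive step, assume $v\ge 1$ and $\deg(\sigma)\ge 1$. The aim is to find a sign sequence $\sigma''$ with $s''_{0}\neq 0$ and $\deg(\sigma'')<\deg(\sigma)$, a multiset $M''$, and an auxiliary factor $q$ with no positive root, so that multiplying a realisation of $(\sigma'',M'')$ by $q$ produces a realisation of $(\sigma,M)$. If $M\neq\emptyset$, I would pick $a\in M$, put $M'':=M\setminus\{a\}$ and $q:=X-a$, aiming for $\deg(\sigma'')=\deg(\sigma)-1$ and $\SC(\sigma'')=v-1$. If $M=\emptyset$ (so $v$ is even and $\ge 2$), I would take $M'':=\emptyset$ and $q:=X^{2}-bX+c$ with $b>0$, $b^{2}<4c$ --- a quadratic with no positive root and sign pattern $+\,-\,+$, morally a fattened double root $(X-b/2)^{2}$ --- aiming for $\deg(\sigma'')=\deg(\sigma)-2$ and $\SC(\sigma'')=v-2$. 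In either case $s''_{0}\neq 0$ is forced (evaluate at $0$), and the hypotheses transfer to $(\sigma'',M'')$, namely $\SC(\sigma'')\ge\#M''$, $\SC(\sigma'')\equiv\#M''\pmod 2$, and $\SC(\sigma'')\le\deg(\sigma'')<\deg(\sigma)$, so the inductive hypothesis applies. (A further harmless move, ``multiply by $X+e$ with $e>0$'', lowers the degree without touching $\SC$ or the positive roots, useful when $\deg(\sigma)>\SC(\sigma)$.)

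The crux, which I expect to be the genuine obstacle, is making the gluing step rigorous: the inductive hypothesis only produces \emph{some} realisation $h$ of $(\sigma'',M'')$, whereas to force $\Sign(qh)=\sigma$ one needs $h$ flexible enough that the low-order coefficients of $qh$ acquire exactly the signs dictated by $\sigma$. So the argument should really be carried out with a \emph{strengthened} inductive statement: that the realisations of $(\sigma,M)$ occupy a nonempty open set of admissible $g$, writing $f=\prod_{a\in M}(X-a)^{m_{a}}g$ with $g$ of degree $\deg(\sigma)-\#M$ and no positive root (equivalently, one may additionally impose any admissible finite list of strict inequalities among the coefficients of $f$, e.g. to make the free roots of $g$ widely separated or tightly clustered). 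Propagating these open conditions through multiplication by $X-a$ and by $X^{2}-bX+c$ --- using the explicit formula $(X-a)h=\sum_{i}(h_{i-1}-ah_{i})X^{i}$, the subadditivity $\SC(pq)\le\SC(p)+\SC(q)$, and the classical fact that multiplication by $X-a$ can be made to add exactly one sign change whose location slides from the bottom to the top of the sequence as $a$ runs over $(0,\infty)$ --- is where the real work lies.

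An alternative packaging worth trying is the one-shot form
\[
 f\;=\;\prod_{a\in M}(X-a)\cdot\prod_{j=1}^{(v-p)/2}\bigl(X^{2}-b_{j}X+c_{j}\bigr)\cdot u(X),
\]
where each $X^{2}-b_{j}X+c_{j}$ has non-real roots ($b_{j}>0$, $b_{j}^{2}<4c_{j}$), supplying the $v-p$ surplus sign changes with no new positive roots, and $u\in\Rnn[X]$ has degree $\deg(\sigma)-v$ to fill up the degree. This reduces the theorem to tuning the free parameters $b_{j},c_{j}$ and the coefficients of $u$ so that the product has sign sequence exactly $\sigma$; it is still proved by the same inductive peeling, but it has the advantage of separating the combinatorics of how $\sigma$ breaks into maximal blocks of constant sign from the analytic input (continuity and openness of the map from these parameters to the signs of the coefficients).
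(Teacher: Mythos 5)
The paper does not prove this theorem---it is imported as a black box from Albouy--Fu~\cite{AlbouyFu}, and the remark that follows points the reader to the literature on the optimality of Descartes' rule---so there is no in-paper argument to compare against.

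Taken on its own, your proposal is a plan rather than a proof, and you diagnose the gap yourself. Peeling off a linear factor $X-a$ per element of $M$, a root-free quadratic $X^{2}-bX+c$ per surplus pair of sign changes, and filling residual degree with a sign-change-free CMP is the right shape of construction and is in the spirit of the literature the authors cite. But the step that carries all the weight is forcing $\Sign\bigl((X-a)h\bigr)=\sigma$ \emph{exactly}---including the internal zero entries of $\sigma$---while creating no extra positive roots, and this is precisely what you label ``where the real work lies.'' Two further points to tighten. The strengthened inductive claim that the realisations form ``a nonempty open set of admissible $g$'' cannot be open in the full coefficient space of $g$: each internal zero of $\sigma$ forces an exact linear relation among the coefficients of $f=\prod_{a\in M}(X-a)^{m_a}g$, a closed codimension-one condition, so what you really mean is open within that affine slice; nonemptiness of that slice-open set is essentially the theorem itself, and the inductive lemma you would actually need (that multiplication by $X-a$ or by $X^{2}-bX+c$ carries a suitable nonempty piece of the slice for $(\sigma'',M'')$ onto a nonempty piece of the slice for $(\sigma,M)$) is neither stated nor proved. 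Finally, the parenthetical move ``multiply by $X+e$'' is not harmless: for $\sigma=+\,0\,+$ no product $(X+e)(h_{0}+h_{1}X)$ realises $\sigma$, since the middle coefficient $h_{0}+eh_{1}$ cannot vanish when $h_{0},h_{1}>0$; fortunately you never need this move, as your base case $\SC(\sigma)=0$ is handled directly by $f=\sum_{i\in\supp(\sigma)}X^{i}$ at any degree.
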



\begin{cor}\label{realization} 
Let $S$ be a finite set of positive reals and $\sigma\in \scr{S}$.
If $\#S \leq \SC(\sigma)$, then there exists $f \in \bb{R}[X]$ such that:
\begin{itemize}
 \item The set of all positive roots of $f$ is precisely $S$.
 \item $\Sign(f) = \sigma$.
\end{itemize}
\end{cor}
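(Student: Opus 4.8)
The plan is to obtain this as an immediate consequence of the realization theorem of \cite{AlbouyFu} stated just above, the only genuine work being a parity correction. I would write $n := \#S$ and enumerate $S = \{r_1,\dots,r_n\}$ with the $r_i$ pairwise distinct and positive. That theorem, fed a finite \emph{multiset} $M$ of positive reals for which $\SC(\sigma) - \#M$ is an even non-negative integer, returns a polynomial $f \in \bb{R}[X]$ whose multiset of positive roots is exactly $M$ and with $\Sign(f) = \sigma$; in particular the \emph{set} of positive roots of the returned $f$ is the underlying set of $M$. So it suffices to exhibit a finite multiset $M$ of positive reals whose underlying set is $S$ and for which $\SC(\sigma) - \#M$ is even and non-negative.

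First I would split on the parity of $\SC(\sigma) - n$. If $\SC(\sigma) \equiv n \pmod{2}$, simply take $M := S$, with every $r_i$ of multiplicity one: then $\#M = n \le \SC(\sigma)$ and $\SC(\sigma) - n$ is even and $\ge 0$, so applying the realization theorem produces an $f$ with $\Sign(f) = \sigma$ whose positive-root set is exactly $S$. If instead $\SC(\sigma) \not\equiv n \pmod{2}$, then $\SC(\sigma) - n$ is an odd non-negative integer, hence $\SC(\sigma) - n \ge 1$, so $n \le \SC(\sigma) - 1$; taking any $r \in S$ and letting $M$ be $S$ with the multiplicity of $r$ raised to $2$, the underlying set of $M$ is still $S$, $\#M = n+1 \le \SC(\sigma)$, and $\SC(\sigma) - (n+1)$ is even and $\ge 0$, so the realization theorem again gives the desired $f$.

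I do not expect a real obstacle here: all the substance lives in the cited theorem, and the corollary only records that one is free to repeat a root in order to adjust the parity of the number of positive roots. The one point to keep in mind is the degenerate case $S = \emptyset$ with $\SC(\sigma)$ odd, for which no repetition trick is available and the statement cannot hold literally; this does not occur in the uses we make of the corollary (where $S$ is non-empty) and could otherwise be excluded by a congruence condition, so it does not affect the development that follows.
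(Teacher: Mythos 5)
The paper states this corollary without proof, treating it as immediate from the Albouy--Fu theorem; your derivation supplies exactly the intended argument, choosing $M=S$ when $\SC(\sigma)-\#S$ is already even and doubling one root of $S$ to fix the parity otherwise. Your caveat is also a genuine catch: when $S=\emptyset$ and $\SC(\sigma)$ is odd, Descartes' rule of signs forces any $f$ with $\Sign(f)=\sigma$ to have an odd (hence positive) number of positive roots, so the corollary as literally stated fails in that degenerate case; as you observe, this does not arise in the paper's uses (where the $a_i$'s form a non-empty set), but the statement would be cleaner with that exclusion made explicit.
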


\begin{rmk}
As for the optimality, there had been a series of studies on it \cite{DescartesRuleofSignoptimalityfirst, DescartesRuleofSignsoptimalcoolconstruction} before \cite{AlbouyFu}, gradually relaxing the conditions on sign sequences $\sigma$. See section 1 of \cite{AlbouyFu} for a clear exposition of the history.
\end{rmk}

Our starting point is the following definition and corollary (Corollary \ref{meaningofmu}) of Descartes' rule of signs and its optimality, which essentially already appeared in \cite{minimalpolynomials} as a notion of \textit{pair structures} (cf. Proposition \ref{equivalence}):

\begin{defi}
For $s=(s_i)_{i\in\omega}\in \Sp$, the number $\mathfrak{d}(s)$ denotes the maximum of the number of sign changes in $t\in \scr{S}$ such that $t$ can be obtained by changing some entries of $s$ to $-$ (or we can also allow $0$).
More precisely,
\begin{align*}
\mathfrak{d}(s):=&\max\left\{\, \SC(t) \mathrel{}\middle|\mathrel{} 
\begin{array}{l}t=(t_i)_{i\in\omega}\in \scr{S}\\
t_i\in \{-,s_i\}\ \text{for any $i\in \omega$}
\end{array}\right\}\\
=&\max\left\{\, \SC(t) \mathrel{}\middle|\mathrel{} 
\begin{array}{l}t=(t_i)_{i\in\omega}\in \scr{S}\\
t_i\in \{-,0,s_i\}\ \text{for any $i\in \omega$}
\end{array}\right\}\text.
\end{align*}
Note that $t_i$ can be $-$ even if $i$ is greater than> $\deg s$.

For $f\in \Rnn[X]$, put $\mathfrak{d}(f):=\mathfrak{d}(\Sign(f))$.
\end{defi}

The following observation is useful: 

\begin{lemm}\label{achieve}
Let $s\in \Sp$ and $\supp(s) \neq \emptyset$.
Let $t\in \scr{S}$ be a sign sequence obtained by changing some entries of $s$ to $0$ or $-$.
If $\SC(t)=\mathfrak{d}(s)$, then $\deg(t) \geq \deg(s) \geq 0$ and $t_{\deg(t)} = -$.
\end{lemm}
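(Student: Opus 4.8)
The plan is to argue by contradiction, showing that if the maximising $t$ had either $\deg(t) < \deg(s)$ or $t_{\deg(t)} \neq -$, we could modify $t$ to strictly increase $\SC$, contradicting maximality. First I would record the basic monotonicity fact that underlies everything: if $t'$ is obtained from $t$ by changing entries only to $-$ or $0$ at positions where the change is "allowed" (i.e. consistent with $t'_i \in \{-, 0, s_i\}$), then appending or inserting a new $-$ beyond the current last nonzero position can only help — concretely, for any $t$ with $\deg(t) = e$ and $t_e = +$, the sequence $t'$ agreeing with $t$ on $[0,e]$ and with $t'_{e+1} = -$ satisfies $\SC(t') = \SC(t) + 1$, since the final run of $+$ in $t$ is now followed by a $-$, creating one extra sign change and destroying none. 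Note this $t'$ is still admissible: $t'_i \in \{-, s_i\}$ for $i \le e$ because $t$ was admissible, and $t'_{e+1} = -$ is allowed at \emph{any} position by the definition of $\mathfrak{d}(s)$.

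Now suppose $\SC(t) = \mathfrak{d}(s)$ and, toward a contradiction, that $t_{\deg(t)} \neq -$. Since $t$ is obtained from $s$ by changing entries to $0$ or $-$, and $t_{\deg(t)} \neq 0$ by definition of degree, we must have $t_{\deg(t)} = +$, which forces $s_{\deg(t)} = +$ as well (an entry can only become $+$ if it was already $+$). Apply the observation of the previous paragraph with $e = \deg(t)$: we obtain an admissible $t'$ with $\SC(t') = \SC(t) + 1 > \mathfrak{d}(s)$, contradicting the definition of $\mathfrak{d}(s)$ as a maximum. Hence $t_{\deg(t)} = -$; in particular $\deg(t) \neq -\infty$, so $t$ has at least one nonzero entry.

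It remains to show $\deg(t) \geq \deg(s)$. We have just shown $t_{\deg(t)} = -$, so $\SC(t) \geq 1$; in particular the set over which $\mathfrak{d}(s)$ is a max is nonempty and $\deg(s) \geq 0$ since $\supp(s) \neq \emptyset$. Suppose for contradiction $\deg(t) < \deg(s)$. Let $d := \deg(s)$, so $s_d = +$. Form $t'$ by setting $t'_i = t_i$ for $i \le \deg(t)$ and $t'_d = -$ (and $0$ elsewhere); this is admissible because changing position $d$ to $-$ is always allowed. Since $t_{\deg(t)} = -$ and we are inserting a new $-$ at a strictly larger position $d$ with only $0$'s in between, no sign change is created or destroyed by this insertion, so $\SC(t') = \SC(t)$; but now $t'_{\deg(t')} = t'_d = -$ with $\deg(t') = d = \deg(s)$. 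This alone does not yet give a contradiction, so instead I would argue directly: take $t'$ with $t'_i = t_i$ for $i < d$ and $t'_d = +$ (legal since $s_d = +$), then $t'_{\deg(t')} = +$ and by the first paragraph's observation applied to $t'$ we can append a $-$ at position $d+1$ to get $\SC \geq \SC(t') + 1 \geq \SC(t) + 1$, once we check $\SC(t') \geq \SC(t)$. The inequality $\SC(t') \geq \SC(t)$ holds because $t'$ agrees with $t$ below $d$ and replaces the trailing zeros of $t$ (from $\deg(t)+1$ to $d$) and sets position $d$ to $+$: since $t_{\deg(t)} = -$, inserting a later $+$ preserves all existing sign changes. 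This yields $\SC \geq \SC(t)+1 > \mathfrak{d}(s)$, a contradiction.

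The main obstacle I anticipate is bookkeeping the sign-change count carefully through these local modifications — in particular verifying that inserting a $-$ (resp. $+$) after a trailing $-$ (resp. $+$) run, separated only by zeros, changes $\SC$ by exactly $0$ (resp. $+1$) and never destroys an existing sign change. A clean way to handle this uniformly is to prove a short auxiliary lemma: for $t \in \scr{S}$ with last nonzero entry $t_e$, and any $e' > e$ and any sign $\epsilon \in \{-,+\}$, the sequence $t''$ with $t''_i = t_i$ ($i \le e$), $t''_{e'} = \epsilon$, $t''_i = 0$ otherwise, satisfies $\SC(t'') = \SC(t) + [\epsilon \neq t_e]$. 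Everything above then follows by one or two applications of this lemma together with the admissibility remark that $-$ is permitted at every coordinate.
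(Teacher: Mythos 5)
Your proposal is correct and follows essentially the same route as the paper's proof: you contradict the maximality of $\SC(t)$ by appending a $-$ after a trailing $+$, and, when $\deg(t)<\deg(s)$, by setting the entry at $\deg(s)$ to $+$ and the next one to $-$. The only slip is the degenerate case $t\equiv 0$: your phrase ``hence $t_{\deg(t)}=-$; in particular $\deg(t)\neq-\infty$'' is circular (the argument for $t_{\deg(t)}=-$ already presupposes $\deg(t)$ finite), and $t_{\deg(t)}=-$ alone does not give $\SC(t)\geq 1$; this is dispatched exactly as in the paper by first noting $\mathfrak{d}(s)\geq 1$ since $\supp(s)\neq\emptyset$, so that $\SC(t)=\mathfrak{d}(s)\geq 1$ forces $t$ to have a nonzero entry.
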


\begin{proof}
First, note that $\mathfrak{d}(s) \geq 1$ since $\supp(s) \neq \emptyset$.

Suppose $\deg(t)<\deg(s)$. Then we can increase the number of sign changes in $t$ by changing $t_{\deg(s)}$ to $+$ and $t_{\deg(s)+1}$ to $-$.
Thus $\deg(t) \geq \deg(s) \geq 0$.

Now, $t_{\deg(t)}$ is $-$ or $+$ by defnition. 
Suppose $t_{\deg(t)}=+$.
Then we can increase the number of sign changes in $t$ by changing $t_{\deg(t)+1}$ to $-$. 
Thus $t_{\deg(t)}=-$.
\end{proof}

The quantity $\mathfrak{d}(s)$ can be feasibly calculated as follows:
\begin{prop}\label{rmk:mucalculation}
Let $s=(s_i)_{i\in\omega}\in \Sp$.
Let $X_0,\ldots,X_k$ be the unique decomposition of $\supp(s)$ satisfying the following:
\begin{itemize}
\item $\supp(s)=X_0\amalg\cdots\amalg X_k$.
\item For each $i \in [0,k]$, $X_i$ is an interval in $\omega$.
\item If $0 \in \supp(s)$, $0 \in X_{0}$. Otherwise, $X_{0}=\emptyset$.
\item For each $i \in [k]$, $X_{i} \neq \emptyset$.
\item $\min X_{i}-\max X_{i-1}\geq 2$ for any $i\in [k]$.
Note that we set $\max\emptyset := -\infty$.
\end{itemize}
Let $l_{i}:=\#X_{i}$ for each $i \in [0,k]$.
Then 
\[
\mathfrak{d}(s)=l_0+2\left\lceil \frac{l_1}{2} \right\rceil+\cdots +2\left\lceil \frac{l_k}{2} \right\rceil.
\]
\end{prop}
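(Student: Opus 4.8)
The plan is to prove the two inequalities $\mathfrak{d}(s)\le l_0+2\sum_{i=1}^k\lceil l_i/2\rceil$ and $\mathfrak{d}(s)\ge l_0+2\sum_{i=1}^k\lceil l_i/2\rceil$ separately, using the second ("allow $0$") description of $\mathfrak{d}(s)$ throughout. For the upper bound I would fix an arbitrary $t\in\scr{S}$ with $t_i\in\{-,0,s_i\}$ for all $i$ and analyse where its sign changes can live. Each sign change is a pair $(i,j)$ with $i<j$, $\{t_i,t_j\}=\{-,+\}$ and only zeros strictly between; exactly one of the two positions carries a $+$, and since $t_q=+$ forces $s_q=+$, that position lies in $\supp(s)$, hence in a unique block $X_m$. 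Thus assigning each sign change to that block partitions the sign changes of $t$, and it suffices to bound, for each $m$, the number $N_m$ of sign changes whose $+$-position lies in $X_m$. A $+$-position $q\in X_m$ carrying a sign change on its left satisfies $t_{q-1}\ne+$ (the entry just left of $q$ is a zero or the $-$ of that sign change), so $q$ is the left endpoint of its maximal run of consecutive $+$'s; symmetrically on the right. Hence every sign change counted by $N_m$ is anchored at an endpoint of a unique maximal $+$-run inside $X_m$ and each endpoint anchors at most one, so $N_m\le 2g_m$ where $g_m$ is the number of such runs; and since $X_m$ is an interval of length $l_m$ with distinct $+$-runs separated by at least one non-$+$ entry, $g_m\le\lceil l_m/2\rceil$. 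This yields $\SC(t)\le\sum_{m=0}^k 2\lceil l_m/2\rceil$, which matches the target apart from the $X_0$-contribution, which must be improved from $2\lceil l_0/2\rceil$ to $l_0$.

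For this improvement I would use that $X_0=[0,l_0-1]$ starts at $0$, so there is no position $-1$. If $t$ has no $+$ in $X_0$ then $N_0=0$; otherwise, if the leftmost $+$-run of $X_0$ begins at $0$, its left endpoint anchors nothing, so $N_0\le 2g_0-1\le 2\lceil l_0/2\rceil-1\le l_0$ (the last inequality by the parity of $l_0$), while if that run begins at some $p\ge1$, all $g_0$ runs lie in $[p,l_0-1]$, an interval of length at most $l_0-1$, so $g_0\le\lceil(l_0-1)/2\rceil=\lfloor l_0/2\rfloor$ and $N_0\le 2\lfloor l_0/2\rfloor\le l_0$. Summing the per-block bounds gives $\SC(t)\le l_0+2\sum_{i=1}^k\lceil l_i/2\rceil$, and since $t$ was arbitrary, $\mathfrak{d}(s)\le l_0+2\sum_{i=1}^k\lceil l_i/2\rceil$.

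For the reverse inequality I would write down an explicit admissible $t$ attaining this value. Take $Q\subseteq\supp(s)$ consisting of the positions $a_i,a_i+2,a_i+4,\dots$ inside each $X_i=[a_i,b_i]$ with $i\ge1$, together with (when $X_0\ne\emptyset$) the positions $0,2,\dots,l_0-1$ if $l_0$ is odd and $1,3,\dots,l_0-1$ if $l_0$ is even. Then $|Q\cap X_i|=\lceil l_i/2\rceil$ in all cases, so $|Q|=\sum_{i=0}^k\lceil l_i/2\rceil$; moreover $0\in Q$ exactly when $X_0\ne\emptyset$ and $l_0$ is odd, and the hypothesis $\min X_i-\max X_{i-1}\ge2$ ensures consecutive elements of $Q$ differ by at least $2$. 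I would then define $t$ by placing $+$ at every element of $Q$, a single $-$ strictly between each pair of consecutive elements of $Q$, a $-$ at position $\max Q+1$, a $-$ at position $\min Q-1$ whenever $\min Q\ge1$, and $0$ elsewhere; the gap condition makes all these positions distinct, so $t\in\scr{S}$ is well-defined with $t_i\in\{-,0,s_i\}$, and its nonzero subsequence is $-\,+\,-\,+\cdots+\,-$ (with $|Q|$ signs $+$) when $\min Q\ge1$ and $+\,-\,+\cdots+\,-$ when $0\in Q$. Hence $\SC(t)=2|Q|-[\,0\in Q\,]$, and a one-line check in the three cases $X_0=\emptyset$, $l_0$ even, $l_0$ odd --- using $2\lceil l_0/2\rceil=l_0$ when $l_0$ is even and $2\lceil l_0/2\rceil=l_0+1$ when $l_0$ is odd --- shows $\SC(t)=l_0+2\sum_{i=1}^k\lceil l_i/2\rceil$.

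I do not expect a deep obstacle here; the part needing the most care is the consistently asymmetric role of $X_0$. Both the sharpening of the upper bound from $2\lceil l_0/2\rceil$ down to $l_0$ and the parity-dependent choice of $+$-positions inside $X_0$ in the construction are precisely where the absence of a coefficient of $X^{-1}$ is exploited, and they are exactly what forces the stated formula to carry a bare $l_0$ (rather than $2\lceil l_0/2\rceil$) in its first summand. The rest --- the run-counting bound for the blocks $X_m$ with $m\ge1$ and the verification that the constructed $t$ alternates --- is routine bookkeeping with the block decomposition of $\supp(s)$.
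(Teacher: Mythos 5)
Your proof is correct, and it takes a genuinely different route from the paper's. The paper proves Proposition~\ref{rmk:mucalculation} by induction on $k$, the number of blocks: in the inductive step it invokes Lemma~\ref{achieve} to normalize an optimal sign assignment for the truncation $s'$, glues on an alternating tail for $\ge$, and splits the sign-change count at the boundary for $\le$. You instead give a direct two-sided argument with no induction and no appeal to Lemma~\ref{achieve}: for the upper bound you assign each sign change to the block containing its unique $+$-position (well-defined because a maximal $+$-run of $t$ sits inside $\supp(s)$ and hence inside one block, since blocks are separated by gaps), show each sign change is anchored at a left or right endpoint of a maximal $+$-run with each such endpoint receiving at most one sign change, and bound the number of runs per block via the separation-by-zeros argument, with the $X_0$-contribution tightened from $2\lceil l_0/2\rceil$ to $l_0$ by exploiting the absence of a position $-1$; for the lower bound you exhibit an explicit near-alternating $t$ supported on every second position in each block. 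The trade-off: the paper's inductive argument is shorter once Lemma~\ref{achieve} is in hand, while yours is self-contained, makes the asymmetric role of $X_0$ completely transparent, and localizes the counting --- which is closer in spirit to how Lemma~\ref{survivingcoefficients} and Lemma~\ref{survivingcoefficientsforceil} are later used. One cosmetic note: when you say ``each endpoint anchors at most one'' you should make explicit that an endpoint is a pair (run, left/right), so that a length-one run still contributes two anchors; and the degenerate case $\supp(s)=\emptyset$ (so $Q=\emptyset$) should be dispatched separately before the construction, though of course both sides of the identity are then $0$.
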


\begin{eg}
If
$s=\underbrace{\plus\plus\plus}_{l_0=3}\zero\zero \underbrace{\plus}_{l_1=1} \zero \underbrace{\plus\plus}_{l_2=2}$,

then
$\mathfrak{d}(s)=3+2+2=7$. 

If $s=\underbrace{}_{l_0=0}\zero\underbrace{\plus\plus\plus}_{l_1=3}\zero\zero \underbrace{\plus}_{l_2=1}$,
then $\mathfrak{d}(s)=0+4+2=6$.
\end{eg}

\begin{proof}[Proof of Proposition \ref{rmk:mucalculation}]
Note that, given $n \geq 0$, $2\left\lceil \frac{n}{2} \right\rceil$ is the least even integer bounding $n$ above.

We prove the equality by induction on $k$ of the decomposition.

When $k=0$, $s=(s_{i})_{i \in \omega}$ is of the following form:
\[
s=\underbrace{\plus\cdots\plus}_{l_0}. 
\]
By changing the components of $s$ to $-$ alternately from $s_{l_{0}}$,
we obtain
\[t=\underbrace{\cdots \minus\plus\minus\plus}_{l_0}\minus\]
attaining $\SC(t) = l_{0}$.
Hence, $\mathfrak{d}(f) \geq l_{0}$.
On the other hand, if $t^{\prime}$ is obtained from $s$ by changing some components of $s$ to $-$, there is no sign change in $(t^{\prime}_{i})_{i=l_{0}}^{\infty}$ since $(s_{i})_{i=l_{0}}^{\infty}=(0,0,\cdots)$.
Therefore, $\SC(t^{\prime}) \leq l_{0}$.
Thus $\mathfrak{d}(s) = l_{0}$.

Consider the case for $k+1$.
The sign sequence $s$ is of the following form:
\[s = \underbrace{s^{\prime}}_{n} \underbrace{\zero\cdots\zero}_{m\geq 1} \underbrace{\plus\cdots\plus}_{l_{k+1}},\]
where $\supp(s^{\prime})$ is divided into intervals $X_{0}, \ldots, X_{k}$ in the manner described in the statement. 
By induction hypothesis, there exists $t$ obtained by changing some components of $(s^{\prime}00\cdots)$ to $-$ such that 
\[\SC(t) = \mathfrak{d}(s^{\prime})= l_0+2\left\lceil \frac{l_1}{2} \right\rceil+\cdots +2\left\lceil \frac{l_k}{2} \right\rceil.
\]
By Lemma \ref{achieve}, $\deg(t) \geq \deg(s) \geq 0$ and $t_{\deg(t)} = -$, so, without loss of generality, we may assume that $t$ is of the form:
\[ t= \underbrace{t^{\prime}}_{n} \underbrace{\minus\cdots\minus}_{m}.\]
(Changing $t_{n+i}$ to $-$ does not change $\SC(t)$, and neither cutting off $t_{n+m+i}$ does, given $t_{n+m-1}=-$.)

Now, changing components of $(s_{i})_{i=0}^{n+m-1}$ to $t$ and those of\\
$(s_{i})_{i
=n+m}^{n+m+2\left\lceil  \frac{l_{k+1}}{2}\right\rceil-1}$ alternately starting from $t_{n+m+2}$, we obtain the following $u$:
\[u=\underbrace{t^{\prime}}_{n} \underbrace{{-}\,\cdots\,{-}}_{m}\, \underbrace{{+}\,{-}\,{+}\,{-}\,\cdots\, {+}\,{-}}_{2\left\lceil  \frac{l_{k+1}}{2}\right\rceil} \]
Then we have 
\[\mathfrak{d}(s) \geq \SC(u) = \SC(t) + 2\left\lceil  \frac{l_{k+1}}{2}\right\rceil =  l_0+2\left\lceil \frac{l_1}{2} \right\rceil+\cdots +2\left\lceil \frac{l_{k+1}}{2} \right\rceil.\]
On the other hand, if $v$ attains $\SC(v) = \mathfrak{d}(s)$, without loss of generality, we may assume that it has the following form:
\[v=\underbrace{v^{\prime}}_{n} \underbrace{{-}\,\cdots\,{-}}_{m} \,\underbrace{v^{\prime\prime}\,{-}}_{l_{k+1}+1}.\]
(Changing $0$-entries of $s$ to $-$ does not decrease $\SC(v)$, and neither cutting off $v_{n+m+l_{k+1}+1}$ and further, given $v_{n+m+l_{k+1}}=-$.)

The number of sign changes in $(v_{i})_{i=0}^{n+m-1}$ is bounded by $\mathfrak{d}(s^{\prime})$.
Furthermore, the number of sign changes in $(v_{i})_{i=n+m-1}^{n+m+l_{k+1}}$ is bounded by $2\left\lceil \frac{l_{k+1}}{2} \right\rceil$ since it starts and ends with $-$.
Thus 
\[\mathfrak{d}(s) \leq \mathfrak{d}(s^{\prime}) + 2\left\lceil  \frac{l_{k+1}}{2}\right\rceil =  l_0+2\left\lceil \frac{l_1}{2} \right\rceil+\cdots +2\left\lceil \frac{l_{k+1}}{2} \right\rceil,\]
and this completes the proof.
\end{proof}

\begin{cor}[The meaning of $\mathfrak{d}(f)$; a corollary of Descartes' rule of signs]\label{meaningofmu}
 Let $f(X) \in \Rnn[X]$ interpolate $S = \{(a_{i},b_{i})\}_{i=1}^{n} \subset \bb{R}_{>0}\times \Rnn$ $(0 < a_{1} < \cdots< a_{n})$. 
 \begin{enumerate}
  \item\label{uniqueness} If $\mathfrak{d}(f) + 1 \leq n$, then $f$ is the unique CMP interpolating $S$.
  \item\label{manyothers} If $n \leq \mathfrak{d}(f)$, then there are infinitely many CMPs interpolating $S$.
 \end{enumerate}
\end{cor}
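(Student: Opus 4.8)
The plan is to derive both parts directly from Descartes' rule of signs together with its optimality (Corollary~\ref{realization}), using $\mathfrak{d}(f)$ as the bridge between sign sequences and root counts. Throughout, write $s := \Sign(f)$, and recall that $\mathfrak{d}(f) = \mathfrak{d}(s)$ is the maximum of $\SC(t)$ over $t \in \scr{S}$ with $t_i \in \{-,0,s_i\}$ for all $i$; in particular $t_i$ is allowed to be $+$ precisely at those $i$ with $s_i = +$, i.e.\ with $f_i \neq 0$.

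For part~\ref{uniqueness} I would argue by contradiction. Suppose $g \in \Rnn[X]$ also interpolates $S$ with $g \neq f$, and set $h := g - f \neq 0$. Since $h$ vanishes at the $n$ distinct positive reals $a_1,\dots,a_n$, Descartes' rule of signs gives $n \leq \SC(\Sign(h))$. The one place a little care is needed is that one should look at $-h = f - g$ rather than at $h$ itself: at every index $i$ with $f_i = 0$ we have $(-h)_i = -g_i \leq 0$ because $g$ has non-negative coefficients, so $\Sign(-h)_i \in \{-,0,s_i\}$ for all $i$. Thus $\Sign(-h)$ is one of the sequences over which the maximum defining $\mathfrak{d}(f)$ is taken, whence $\SC(\Sign(h)) = \SC(\Sign(-h)) \leq \mathfrak{d}(f)$, the first equality holding because $\SC$ is unchanged when $+$ and $-$ are globally interchanged. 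Combining, $n \leq \mathfrak{d}(f)$, contradicting $\mathfrak{d}(f) + 1 \leq n$.

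For part~\ref{manyothers} I would build a one-parameter family of interpolants. We may assume $n \geq 1$ (the case $n = 0$ being trivial). Fix $\sigma \in \scr{S}$ with $\sigma_i \in \{-,0,s_i\}$ for all $i$ and $\SC(\sigma) = \mathfrak{d}(f)$ (such a $\sigma$ exists; one is exhibited in the proof of Proposition~\ref{rmk:mucalculation}). Since $\#\{a_1,\dots,a_n\} = n \leq \mathfrak{d}(f) = \SC(\sigma)$, Corollary~\ref{realization} furnishes $p \in \bb{R}[X]$ whose set of positive roots is exactly $\{a_1,\dots,a_n\}$ and with $\Sign(p) = \sigma$; note $p \neq 0$ since $\SC(\sigma) \geq n \geq 1$. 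Put $g_\varepsilon := f - \varepsilon p$ for $\varepsilon > 0$. Then $g_\varepsilon(a_i) = b_i - \varepsilon p(a_i) = b_i$ for every $i$, and I claim $g_\varepsilon \in \Rnn[X]$ once $\varepsilon$ is small: at the finitely many indices $i$ with $f_i > 0$ the coefficient $f_i - \varepsilon p_i$ remains positive for $\varepsilon$ small, while at each index $i$ with $f_i = 0$ we have $\sigma_i \in \{-,0\}$, hence $p_i \leq 0$, hence $(g_\varepsilon)_i = -\varepsilon p_i \geq 0$. As $p \neq 0$, distinct values of $\varepsilon$ give distinct $g_\varepsilon$, so $S$ has infinitely many interpolating CMPs.

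The argument is largely bookkeeping once the right object is pinned down; the only genuine subtlety is the sign direction in part~\ref{uniqueness} --- it is $f - g$, not $g - f$, whose sign pattern is subordinate to the template defining $\mathfrak{d}(f)$ --- and the very same observation ($p_i \leq 0$ wherever $f_i = 0$) is what keeps the perturbation $f - \varepsilon p$ inside $\Rnn[X]$ in part~\ref{manyothers}.
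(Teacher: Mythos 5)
Your proof is correct and follows essentially the same route as the paper's: part~\ref{uniqueness} bounds the sign changes of $f-g$ by $\mathfrak{d}(f)$ using Descartes' rule, and part~\ref{manyothers} perturbs $f$ by $-\varepsilon p$ where $p$ is produced by the optimality of Descartes' rule with sign pattern subordinate to $\Sign(f)$. You simply spell out in more detail the bookkeeping of why $\Sign(f-g)$ and the perturbation respectively fit the template $\{-,0,s_i\}$, which the paper leaves implicit.
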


\begin{proof}
 
 We first consider the case \ref{uniqueness}.
 Assume $\mathfrak{d}(f) + 1 \leq n$.
 Suppose $g \in \Rnn[X]\setminus\{f\}$ also interpolates $S$.
 Then each $a_{i}$ is a positive root of the polynomial $f-g \in \bb{R}[X]\setminus \{0\}$.
 Therefore, we have 
 \[n=\#S \leq \SC(\Sign(f-g)) \leq \mathfrak{d}(f)\]
 by Descartes' rule of signs, the definition of $\mathfrak{d}(f)$, and that each coefficient of $f$ and $g$ is non-negative.
 This contradicts $\mathfrak{d}(f)+1 \leq n$.
 
 Next, we consider the case \ref{manyothers}.
 By the definition of $\mathfrak{d}(f)$, there exists $\sigma \in \mathscr{S}$ such that:
 \begin{itemize}
  \item $\sigma$ can be obtained by changing some components of $\Sign(f)$ to $-$, and
  \item $n=\#S \leq \SC(\sigma)$.
 \end{itemize}
 
 For such $\sigma$, there exists $h(X) \in \bb{R}[X]$ such that: 
 \begin{itemize}
  \item $h(a_{i}) = 0$ for each $i \in [n]$, and
  \item $\Sign(h) = \sigma$
 \end{itemize} 
 by the optimality of Descartes' rule of signs.
 
 Then, for any sufficiently small $\epsilon > 0$, $f-\epsilon h \in \Rnn[X]$ also interpolates $S$.
\end{proof}

Now, we can define the notion of a \textit{minimal polynomial}:
\begin{defi}
 Let $S=\{(a_{i},b_{i})\}_{i=1}^{n} \subset \bb{R}_{>0}\times \Rnn$ $(0<a_{1}< \cdots < a_{n})$ and $f(X)  \in \Rnn[X]$ interpolate $S$.
 We say \textit{$f$ is the minimal polynomial of $S$} if $\mathfrak{d}(f)\leq n$.
\end{defi}

\begin{eg}
$0 \in \Rnn[X]$ is the minimal polynomial of $S=\emptyset$.
\end{eg}

\begin{eg}
For each $a>0$ and $b \in \Rnn$,
$b \in \Rnn[X]$ is the minimal polynomial of $S=\{(a,b)\}$.
Especially if $b=0$, then it is the only CMP interpolating $S$, by Corollary \ref{meaningofmu}.
\end{eg}

A further example is postponed to Example \ref{baseexample}. 
We treat the uniqueness and the existence of the minimal polynomial in Theorem \ref{equivalence} and Theorem \ref{existenceofminimalpolynomial}.

We use the same term ``the minimal polynomial" as \cite{minimalpolynomials} based on the following theorem:
\begin{thm}[essentially by \cite{minimalpolynomials}]\label{equivalence}
Let $f(X) \in \Rnn[X]$ interpolate finite $S=\{(a_{i},b_{i})\}_{i=1}^{n}$ $(0<a_{1} < \cdots < a_{n})$.
Then the following are equivalent:
\begin{enumerate}
 \item\label{defofminpol} $f$ is the minimal polynomial of $S$.
 \item \label{quitestrongminimality} Any $g \in \Rnn[X]\setminus\{f\}$ interpolating $S$ can be represented as
 \[g(X) = f(X) + r(X) \prod^{n}_{i=0}(X-a_{i})\]
 with a residual polynomial $r(X)$ such that $r(t) > 0$ for any $t > 0$ (cf. \cite{firstminimalpolynomial} and Theorem 5 in \cite{minimalpolynomials}).
 \item\label{strongminimality} For any $g \in \Rnn[X]\setminus\{f\}$ interpolating $S$ and any $t > a_{n}$, it holds that $g(t) > f(t)$.
 \item\label{weakminimality} For any $g \in \Rnn[X]$ interpolating $S$ and any $t > a_{n}$, it holds that $g(t) \geq f(t)$.
  \item\label{quiteweakminimality} For any $g \in \Rnn[X]$ interpolating $S$, the leading coefficient of $g-f$ is nonnegative, that is, for sufficiently large $t$, it holds that $g(t) \geq f(t)$.
 \item\label{pairstructure} $f$ has the following pair structure (cf. \cite{minimalpolynomials}):
\[f(X) = \sum_{i=1}^{g_{n}} (\alpha_{i}+\beta_{i}X)X^{m_{i}},\]
 where 
 $\alpha_{i},\beta_{i} \in \Rnn$,
 $g_{n} := \left\lfloor\frac{n+1}{2}\right\rfloor$,
 $m_{i}+2 \leq m_{i+1}$ for $i \in [g_{n}-1]$, and
 \[\begin{cases}
  \text{$m_{1}=-1$ \& $\alpha_{1}=0$} \ &(\mbox{if $n$ is odd})\\
  m_{1}\geq 0 \ &(\mbox{if $n$ is even}).
 \end{cases}
 \]
\end{enumerate}
In particular, a minimal polynomial of $S$ is unique.
\end{thm}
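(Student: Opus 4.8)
The plan is to prove the equivalence by establishing a cycle of implications together with one or two direct proofs, and to treat the pair‐structure condition \ref{pairstructure} somewhat separately, since it is the most concrete and rigid of the six. I would organize the argument as
\[
\ref{defofminpol} \Rightarrow \ref{quitestrongminimality} \Rightarrow \ref{strongminimality} \Rightarrow \ref{weakminimality} \Rightarrow \ref{quiteweakminimality} \Rightarrow \ref{defofminpol},
\]
closing the loop among the first five, and then prove \ref{defofminpol} $\Leftrightarrow$ \ref{pairstructure} to attach the sixth. The uniqueness of the minimal polynomial falls out for free: if $f$ and $\tilde f$ were two minimal polynomials of $S$, then by \ref{strongminimality} applied to each we would get $f(t) > \tilde f(t)$ and $\tilde f(t) > f(t)$ for $t > a_n$, a contradiction; so once the cycle is closed uniqueness is immediate.

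For \ref{defofminpol} $\Rightarrow$ \ref{quitestrongminimality}: given $g \in \Rnn[X]\setminus\{f\}$ interpolating $S$, the polynomial $g - f$ vanishes at the $n$ points $a_1,\dots,a_n$, so by Descartes' rule of signs $n \le \SC(\Sign(g-f)) \le \mathfrak{d}(f) \le n$ (using $g \ge 0$ coefficientwise and the definition of $\mathfrak{d}$, exactly as in the proof of Corollary \ref{meaningofmu}). Hence $\SC(\Sign(g-f)) = n$ exactly, so $g-f$ has exactly $n$ positive roots, each simple, namely $a_1,\dots,a_n$; write $g - f = r(X)\prod_{i=1}^n (X-a_i)$. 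One must still insert the extra factor $(X - a_0)$ from the product $\prod_{i=0}^n$ in the statement — this is a harmless normalization: absorb $(X-a_0)^{-1}$ into $r$, or note the statement's product starts at $i=0$ and reindex; I would add a remark fixing $a_0$ (e.g. $a_0 := 0$ or any convenient value) so the displayed formula is literally correct. The sign claim $r(t) > 0$ for $t > 0$ follows because $g - f$ has no positive roots beyond the $a_i$, so $r$ has none either, and its sign for large $t$ is the sign of the leading coefficient of $g-f$, which is nonnegative by Descartes (the top sign of $g-f$ cannot be $-$ when it already achieves $n = \mathfrak{d}(f)$ sign changes — invoke Lemma \ref{achieve}), hence positive; by continuity and absence of zeros, $r > 0$ on all of $\bb{R}_{>0}$. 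The implications \ref{quitestrongminimality} $\Rightarrow$ \ref{strongminimality} $\Rightarrow$ \ref{weakminimality} $\Rightarrow$ \ref{quiteweakminimality} are then essentially trivial (evaluate the product factor, which is positive for $t > a_n$; weaken strict to non‐strict; weaken "all $t > a_n$" to "sufficiently large $t$").

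The step I expect to carry the real weight is \ref{quiteweakminimality} $\Rightarrow$ \ref{defofminpol}, i.e.\ showing that if $f$ is \emph{not} minimal ($\mathfrak{d}(f) \ge n+1$) then some interpolating CMP $g$ has $g - f$ with negative leading coefficient — this is where the optimality of Descartes' rule (Corollary \ref{realization}) is needed, and one must be careful: by Lemma \ref{achieve}, when $\mathfrak{d}(f) = \SC(\sigma)$ is attained, the top entry of $\sigma$ is $-$, so one can pick $\sigma$ obtained from $\Sign(f)$ by flipping entries to $-$ with $\SC(\sigma) \ge n+1 > n = \#\{a_i\}$ and $\deg \sigma$ large, realize $h$ with $\Sign(h) = \sigma$ and positive roots exactly $\{a_1,\dots,a_n\}$, and then check that $f - \epsilon h$ stays a CMP for small $\epsilon$ (here the flips were \emph{to} $-$, so $f$'s nonzero coefficients dominate, and coefficients of $f$ that were $0$ can only be decreased if $h$'s corresponding coefficient is positive — but $\sigma$ was built from $\Sign(f)$, so outside $\deg f$ we may have introduced a trailing $-$; one must arrange $h$ so that $f - \epsilon h$ is still nonnegative, which forces some care about which coordinates of $\sigma$ are allowed to be $-$ versus $0$, cf.\ the second display in the definition of $\mathfrak{d}$). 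The leading coefficient of $(f - \epsilon h) - f = -\epsilon h$ is then $-\epsilon \cdot(\text{leading coeff of }h) < 0$, contradicting \ref{quiteweakminimality}. Finally, for \ref{defofminpol} $\Leftrightarrow$ \ref{pairstructure}: the forward direction uses Proposition \ref{rmk:mucalculation} — write $\Sign(f)$ via its blocks $X_0,\dots,X_k$ with $\mathfrak{d}(f) = l_0 + 2\sum_{i\ge 1}\lceil l_i/2\rceil \le n$; each block of length $l_i$ contributes a run of consecutive exponents, and the constraint forces each block (except possibly the first, when $n$ is even, contributing $l_0$ directly) to have length $\le 2$, i.e.\ $f$ is a sum of terms $(\alpha + \beta X)X^m$ with gaps $\ge 2$ between the $m$'s; the case split on the parity of $n$ matches whether $l_0$ can be odd. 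Conversely, a polynomial in pair‐structure form has exactly this block decomposition, so Proposition \ref{rmk:mucalculation} gives $\mathfrak{d}(f) \le n$ directly. The bookkeeping of $g_n = \lfloor (n+1)/2 \rfloor$ and the $m_1 = -1$, $\alpha_1 = 0$ convention in the odd case is the fiddly part, but it is pure arithmetic once Proposition \ref{rmk:mucalculation} is in hand.
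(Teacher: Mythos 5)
Your overall approach mirrors the paper's exactly: the cycle \ref{defofminpol} $\Rightarrow$ \ref{quitestrongminimality} $\Rightarrow$ \ref{strongminimality} $\Rightarrow$ \ref{weakminimality} $\Rightarrow$ \ref{quiteweakminimality} $\Rightarrow$ \ref{defofminpol}, with \ref{defofminpol} $\Leftrightarrow$ \ref{pairstructure} attached via Proposition \ref{rmk:mucalculation}. Your \ref{defofminpol} $\Rightarrow$ \ref{quitestrongminimality} argument is correct and is the paper's argument (and you are right that the index $\prod_{i=0}^{n}$ in the statement appears to be a typo: the paper's own proof writes $\prod_{i=1}^{n}$).

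There is, however, a genuine gap in your key step \ref{quiteweakminimality} $\Rightarrow$ \ref{defofminpol}. Your construction takes $\sigma$ obtained from $\Sign(f)$ by flipping entries to $-$ with $\SC(\sigma) \ge n+1$ and ``$\deg\sigma$ large,'' realizes $h$ with $\Sign(h)=\sigma$ and roots $a_1,\dots,a_n$, and then asserts that the leading coefficient of $-\epsilon h$ is negative, i.e.\ that the leading coefficient of $h$ is \emph{positive}. But you yourself observe, via Lemma \ref{achieve}, that when $\SC(\sigma)$ is (near) maximal the top entry of $\sigma$ is $-$; so the leading coefficient of $h$ is typically \emph{negative}, making the leading coefficient of $g_\epsilon - f = -\epsilon h$ \emph{positive} and yielding no contradiction with \ref{quiteweakminimality}. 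The worry you flagged about CMP-ness of $f - \epsilon h$ from a ``trailing $-$'' beyond $\deg f$ is actually harmless (a negative coefficient of $h$ at index $i > \deg f$ makes the $i$-th coefficient of $f-\epsilon h$ \emph{positive}); the real issue is the sign of the leading term. The paper fixes this with an explicit truncation step that your sketch is missing: writing $\Sign(f)=s_0\cdots s_l$ with $l=\deg f$ and $\sigma=\sigma_0\cdots\sigma_k$ with $k\ge l$, change $\sigma_l\cdots\sigma_k$ back to $s_l\cdots s_k = +0\cdots0$, obtaining $\tau$ with $\deg\tau = l$, $\tau_l=+$, and $\SC(\tau)\ge n$ (the truncation loses at most one sign change). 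Realizing $h$ with $\Sign(h)=\tau$ then gives $h$ with positive leading coefficient of degree exactly $\deg f$, so $g_\epsilon := f-\epsilon h\in\Rnn[X]$ for small $\epsilon>0$, interpolates $S$, and $g_\epsilon - f = -\epsilon h$ has negative leading coefficient, contradicting \ref{quiteweakminimality}. Without that truncation, the argument as you wrote it does not close.
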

\begin{rmk}
Note that \cite{minimalpolynomials} considers polynomials vanishing at zero while we are treating the whole $\Rnn[X]$ here; that is, they adopt $X\Rnn[X]$ instead of $\Rnn[X]$ in the definition of minimal polynomials.
Note that the two conventions can be converted to each other simply; let $f(X) \in \Rnn[X]$ interpolate $S=\{(a_{i},b_{i})\}_{i=1}^{n} \subset \bb{R}_{>0} \times \Rnn$ $(0<a_{1}< \cdots < a_{n})$.
 Then $f$ is the minimal polynomial of $S$ in our sense if and only if $Xf(X)$ is the minimal polynomial of $\{(a_{i},a_{i}b_{i})\}_{i=1}^{n}$ in the sense of \cite{minimalpolynomials}.
The condition (\ref{pairstructure}) is also consistent with the criterion given in \cite{minimalpolynomials} applied to $Xf(X)$.
 We stick to our convention since we think it looks more natural when we consider $\mathfrak{d}(f)$.
\end{rmk}

\begin{rmk}
It is proven that $r(X)$ in (\ref{quitestrongminimality}) is actually a CMP. 
See Theorem 5 in \cite{minimalpolynomials}.
\end{rmk}

The essence is already argued in \cite{minimalpolynomials}, but we include our version of the proof for completeness:

\begin{proof}
 (\ref{quitestrongminimality}) $\Longrightarrow$ (\ref{strongminimality}) $\Longrightarrow$ (\ref{weakminimality})  $\Longrightarrow$ (\ref{quiteweakminimality})
 immediately follows from the definitions.
 
 Also, (\ref{defofminpol}) $\Longleftrightarrow$ (\ref{pairstructure}) follows easily from the computation of $\mathfrak{d}(f)$ given in Proposition \ref{rmk:mucalculation}.
 
 (\ref{quiteweakminimality}) $\Longrightarrow$ (\ref{defofminpol}).
 We consider the contrapositive. 
 Suppose $\mathfrak{d}(f) \geq n+1$.
 Then $f \neq 0$, and there exists $\sigma \in \mathscr{S}$ such that:
 \begin{itemize}
  \item $\sigma$ is obtained by changing some components of $\Sign(f)$ to $-$.
  \item $\SC(\sigma) \geq n+1$.
 \end{itemize}
 
 Let $\sigma=\sigma_{0}\cdots \sigma_{k}$ and $\Sign(f)=s_{0} \cdots s_{l}$, where \\
 $k\geq l=\deg(f)$.
 Changing the top components $\sigma_{l} \cdots \sigma_{k}$ back to $s_{l}\cdots s_{k}= +0\cdots 0$,
 we obtain $\tau \in \mathscr{S}$ such that:
 \begin{itemize}
  \item $\tau$ is obtained by changing some components of $\Sign(f)$ to $-$.
  \item $\SC(\tau) \geq n$.
  \item $\deg(\tau)=l$ and $\tau_{l}=+$.
 \end{itemize}
 
 Now, by the optimality of Descartes' rule of signs, there exists $h \in \bb{R}[X]$ such that:
 \begin{itemize}
  \item Each $a_{i}$ $(i \in [n])$ is a root of $h$.
  \item $\Sign(h) = \tau$.
 \end{itemize}
 
 Then we have $g_{\epsilon}:=f-\epsilon h \in \Rnn[X]$ for sufficiently small $\epsilon > 0$, and $g_{\epsilon}$ interpolates $S$.
 Furthermore, the leading coefficient of $f-g_{\epsilon}$ is positive, and therefore $f(t) > g_{\epsilon}(t)$ for sufficiently large $t$.
 Thus (\ref{quiteweakminimality}) does not hold.
 
  (\ref{defofminpol}) $\Longrightarrow$ (\ref{quitestrongminimality}).
 Suppose $g \in \Rnn[X]\setminus \{f\}$ also interpolates $S$.
 Then each $a_{i}$ is a root of $f-g \in \bb{R}[X] \setminus \{0\}$, and therefore we can write
 \[g(X) = f(X) + r(X) \prod_{i=1}^{n}(X-a_{i}).\]
 The remaining problem is to show $r(t) > 0$ for any $t > 0$.

 First, by Descartes' rule of signs, we have
  \[n \leq \SC(\Sign(f-g)) \leq \mathfrak{d}(f).\]
  
  By the assumption (\ref{defofminpol}), we obtain $n=\mathfrak{d}(f)$. 
  Therefore, we see that there is no root of $(f-g)$ other than $a_{i}$'s, and, by Descartes' rule of signs again, each $a_{i}$ is a simple root; that is, $r$ has no root in $\mathbb{R}_{>0}$.
  Furthermore, by Lemma \ref{achieve}, the leading coefficient of $(f-g)$ is negative.
  Thus, the leading coefficient of $r$ is positive.
  This implies $r(t)> 0$ for any $t>0$.
\end{proof}

By Corollary \ref{meaningofmu},
we can see that  our original problem, that is, to find the unique CMP
interpolating given $S$ if it exists, is essentially the same as finding the minimal polynomial of $S$ if it exists:

\begin{cor}\label{uniqueCMPisminimalpolynomial}
Let $f(X) \in \Rnn[X]$ interpolate finite $S=\{(a_{i},b_{i})\}_{i=1}^{n}$ $(0<a_{1}<\cdots < a_{n})$.
Then the following are equivalent:
\begin{enumerate}
\item $f(X)$ is the unique CMP interpolating $S$.
\item $f(X)$ is the minimal polynomial of some $T \subsetneq S$.
\end{enumerate}
\end{cor}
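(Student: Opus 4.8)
The plan is to read this off directly from Corollary~\ref{meaningofmu}, which already isolates both implications in terms of the quantity $\mathfrak{d}(f)$; no new ideas should be needed.

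First I would handle the direction (2)$\Rightarrow$(1). Suppose $f$ is the minimal polynomial of some $T\subsetneq S$. By the standing hypothesis $f$ interpolates $S$, and since $T$ is a proper subset of the $n$-element set $S$ we have $\#T\le n-1$, hence $\mathfrak{d}(f)\le\#T\le n-1$, i.e.\ $\mathfrak{d}(f)+1\le n=\#S$. Part~\ref{uniqueness} of Corollary~\ref{meaningofmu}, applied to $f$ and $S$, then gives that $f$ is the unique CMP interpolating $S$.

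For (1)$\Rightarrow$(2) I would argue as follows. Assume $f$ is the unique CMP interpolating $S$. Then $S\neq\emptyset$: otherwise every CMP interpolates $S$ vacuously, whereas $\Rnn[X]$ contains more than one element, contradicting uniqueness; so $n\ge 1$. Next, if $n\le\mathfrak{d}(f)$ held, part~\ref{manyothers} of Corollary~\ref{meaningofmu} would produce infinitely many CMPs interpolating $S$, again contradicting uniqueness; therefore $\mathfrak{d}(f)\le n-1$. Finally, pick any $T\subsetneq S$ with $\#T=n-1$, for instance $T=S\setminus\{(a_n,b_n)\}$. Since $f$ interpolates $S$ it interpolates $T$, and $\mathfrak{d}(f)\le n-1=\#T$, so by definition $f$ is the minimal polynomial of $T$, with $T\subsetneq S$ as required.

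There is essentially no real obstacle: Corollary~\ref{meaningofmu} does all the work, and the only point that needs a moment's care is the forward direction, where one must exclude the case $S=\emptyset$ so that a proper subset of size $n-1$ actually exists.
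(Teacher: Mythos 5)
Your proposal is correct and follows exactly the route the paper intends: the paper states Corollary \ref{uniqueCMPisminimalpolynomial} as an immediate consequence of Corollary \ref{meaningofmu}, and your two implications (using part \ref{uniqueness} for (2)$\Rightarrow$(1) and part \ref{manyothers} plus the definition of the minimal polynomial for (1)$\Rightarrow$(2)) are precisely that argument. The brief check that $S\neq\emptyset$ in the forward direction is a reasonable added detail and does not change the approach.
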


\begin{eg}\label{baseexample}
Let $S=\{(a_{1},b_{1}), (a_{2},b_{2})\}$ $(0<a_{1}<a_{2}$, $0 \leq b_{1} \leq b_{2})$.
If $b_{1}=b_{2}=0$, then $0 \in \Rnn[X]$ is the unique CMP interpolating $S$ since it interpolates $S$ and $\mathfrak{d}(0)=0<\#S$. 
If $b_1=0$ and $b_2 > 0$, then there is no CMP interpolating $S$. 

Assume $b_{1} > 0$.

If $b_{1}=b_{2}$, then $b_{1} \in \Rnn[X]$ is the unique CMP interpolating $S$ since it interpolates $S$ and $\mathfrak{d}(b_{1}) = 1 < \#S$.

If $0 < b_{1} < b_{2}$, let $n := \lceil \frac{\log(b_{2}/b_{1})}{\log(a_{2}/a_{1})} \rceil$, that is, $n$ is the unique natural number satisfying
\[\left(\frac{a_{2}}{a_{1}} \right)^{n-1} < \frac{b_{2}}{b_{1}} \leq \left(\frac{a_{2}}{a_{1}} \right)^{n}.\]
Then
\[f(X) := \frac{b_{1}\left\{ \left( \frac{b_{2}}{b_{1}} - \left(\frac{a_{2}}{a_{1}}\right)^{n-1}\right)X^{n} + a_{1} \left( \left(\frac{a_{2}}{a_{1}}\right)^{n} - \frac{b_{2}}{b_{1}} \right) X^{n-1} \right\}}{(a_{2}-a_{1})a_{2}^{n-1}} \]
is the minimal polynomial of $S$.
Indeed, it is straightforward to see $f \in \Rnn[X]$, $f$ interpolates $S$, and $\mathfrak{d}(f) = 2 = \#S$.
\end{eg}

There are also criteria when minimal polynomials exist:
\begin{thm}[\cite{Thielcke}]\label{existenceofminimalpolynomial}
A set $S=\{(a_{i},b_{i})\}_{i=1}^{n} \ (0<a_{1} < \cdots < a_{n})$ has a minimal polynomial if and only if there exists $f \in \Rnn[X]$ which interpolates $S$.
\end{thm}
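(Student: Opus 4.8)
The forward implication is immediate: when it exists, the minimal polynomial of $S$ lies in $\Rnn[X]$ and interpolates $S$ by definition. So the plan is to prove the converse, namely that the existence of a CMP interpolating $S$ produces a minimal polynomial of $S$. The idea is to exhibit, among all CMPs interpolating $S$, one that is ``eventually pointwise smallest'', and then to quote Theorem \ref{equivalence} to see that such a polynomial must be the minimal polynomial.

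If $S=\emptyset$ then $0$ is the minimal polynomial, so assume $S\neq\emptyset$ and fix some $f_{0}\in\Rnn[X]$ interpolating $S$ (one exists by hypothesis); put $d_{0}:=\max(\deg f_{0},0)$. Let $K$ be the set of $f\in\Rnn[X]$ with $\deg f\leq d_{0}$ interpolating $S$, viewed as a subset of $\bb{R}^{d_{0}+1}$ via coefficient vectors. Then $K$ is closed and convex, being cut out by the linear equations $f(a_{i})=b_{i}$ together with the inequalities expressing non-negativity of the coefficients; and $K$ is bounded, since for $f=\sum_{j=0}^{d_{0}}c_{j}X^{j}\in K$ one has $0\leq c_{j}a_{1}^{j}\leq f(a_{1})=b_{1}$, hence $0\leq c_{j}\leq b_{1}\max(1,a_{1}^{-d_{0}})$ for every $j$. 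Thus $K$ is a non-empty compact convex set (it contains $f_{0}$), in fact a polytope.

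Next I would extract from $K$ its ``eventually smallest'' element. Order polynomials by $f\preceq g$ iff $g(t)\geq f(t)$ for all sufficiently large $t$; since $(g-f)(t)$ tends to $\pm\infty$ according to the sign of the leading coefficient of $g-f$, this amounts to saying that $f=g$ or the leading coefficient of $g-f$ is positive, and on $K$ it coincides with the lexicographic order on coefficient vectors read from the $X^{d_{0}}$-coefficient downwards, which is a total order. A $\preceq$-least element $f^{*}$ of $K$ is then produced by the usual nested-minimization argument: minimize the continuous functional ``coefficient of $X^{d_{0}}$'' over the compact set $K$ to obtain a non-empty compact $K_{1}\subseteq K$, minimize ``coefficient of $X^{d_{0}-1}$'' over $K_{1}$ to obtain $K_{2}$, and so on; after $d_{0}+1$ steps a single polynomial $f^{*}$ remains, and $f^{*}\preceq g$ for every $g\in K$ by construction. (Equivalently, run a lexicographic linear program over the polytope $K$.) Moreover $f^{*}\preceq g$ holds for \emph{every} $g\in\Rnn[X]$ interpolating $S$, not just those in $K$: if $\deg g>d_{0}\geq\deg f^{*}$ then $g-f^{*}$ has positive leading coefficient, so $f^{*}\preceq g$ automatically, while if $\deg g\leq d_{0}$ then $g\in K$.

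It remains to identify $f^{*}$ as the minimal polynomial. Since $f^{*}\in\Rnn[X]$ interpolates $S$ and satisfies $g(t)\geq f^{*}(t)$ for all large $t$ whenever $g\in\Rnn[X]$ interpolates $S$, it satisfies condition (\ref{quiteweakminimality}) of Theorem \ref{equivalence}; hence the implication (\ref{quiteweakminimality})$\Rightarrow$(\ref{defofminpol}) of that theorem gives $\mathfrak{d}(f^{*})\leq n$, i.e.\ $f^{*}$ is the minimal polynomial of $S$. No circularity arises, since the proof of Theorem \ref{equivalence} does not refer to the present statement. The step I expect to need the most care is the extraction of $f^{*}$: one must verify both that the successive minima are attained — which is exactly where compactness of $K$, and therefore $a_{1}>0$ and non-negativity of coefficients, are used — and that the resulting $f^{*}$ dominates \emph{all} interpolating CMPs, not merely those of bounded degree; everything else is bookkeeping plus the single appeal to Theorem \ref{equivalence}.
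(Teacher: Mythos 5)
Your proof is correct and self-contained, and it differs in packaging from what the paper indicates. The paper does not prove this theorem itself: it cites \cite{Thielcke} and notes in Remark \ref{proofofabovecriterion} that the correctness analysis of Algorithm \ref{minimalpolynomialbylinearprogramming} --- search for the minimum degree $d$ admitting an interpolating CMP, then minimize the leading coefficient among interpolating CMPs of degree at most $d$ --- also furnishes a proof. Your argument is in the same spirit (produce an eventually-smallest interpolating CMP and feed it to Theorem \ref{equivalence}), but it is arranged more cleanly for an existence statement: you cap the degree at $d_0$ using one given interpolating CMP $f_0$, observe that the feasible set $K$ is a nonempty compact polytope, and extract the \emph{full} lexicographic minimum $f^*$ by nested compactness. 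The degree cap plus full lexicographic minimality yield condition (\ref{quiteweakminimality}) of Theorem \ref{equivalence} directly, so you may invoke (\ref{quiteweakminimality})$\Rightarrow$(\ref{defofminpol}) as a black box, and your remark that the proof of Theorem \ref{equivalence} does not depend on the present statement is exactly the right thing to check (it holds). By contrast, minimizing only the degree and the single leading coefficient is not \emph{a priori} enough to give (\ref{quiteweakminimality}) directly, since two interpolating CMPs may share degree and leading coefficient; that route finishes by re-opening the perturbation construction inside the proof of (\ref{quiteweakminimality})$\Rightarrow$(\ref{defofminpol}) and observing that the $g_\epsilon = f - \epsilon h$ built there keeps the degree fixed while strictly lowering the leading coefficient. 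Your version trades that re-opening for tracking all coefficients lexicographically --- a reasonable trade and arguably the more transparent proof.
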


The following criterion enables us to take an inductive approach:

\begin{thm}[\cite{minimalpolynomials}]\label{abovecriterion}
Let $n \geq 1$,
\[S_{n+1}=\{(a_{i},b_{i})\}_{i=1}^{n+1} \ (0<a_{1} < \cdots < a_{n+1}),\] 
and let $f(X) \in \Rnn[X]$ interpolate $S_{n}:=\{(a_{i},b_{i})\}_{i=1}^{n}$.
Suppose $f(X)$ is the minimal polynomial of $S_{n}$ and $\mathfrak{d}(f)=n$, that is, $f$ is not the minimal polynomial of $S_{n-1}:=\{(a_{i},b_{i})\}_{i=1}^{n-1}$.
Then the following are equivalent:
\begin{enumerate}
 \item There exists a minimal polynomial of $S_{n+1}$.
 \item $f(a_{n+1}) \leq b_{n+1}$.
\end{enumerate}

\end{thm}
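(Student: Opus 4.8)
The plan is to prove both directions via Theorem~\ref{equivalence}, exploiting the pair-structure characterization and the ``weak minimality'' characterization (\ref{weakminimality}) of minimal polynomials, together with the inductive flavor of the hypothesis $\mathfrak{d}(f)=n$.

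For the direction (1)$\Rightarrow$(2), I would argue contrapositively: assume $f(a_{n+1}) > b_{n+1}$ and show no minimal polynomial of $S_{n+1}$ can exist. By Theorem~\ref{existenceofminimalpolynomial}, it suffices to show that \emph{no} CMP interpolates $S_{n+1}$. Suppose $g \in \Rnn[X]$ interpolates $S_{n+1}$; then in particular $g$ interpolates $S_{n}$, and since $f$ is the minimal polynomial of $S_{n}$, characterization (\ref{weakminimality}) of Theorem~\ref{equivalence} gives $g(t) \geq f(t)$ for all $t > a_{n}$. Taking $t = a_{n+1} > a_{n}$ yields $b_{n+1} = g(a_{n+1}) \geq f(a_{n+1}) > b_{n+1}$, a contradiction. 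Hence no such $g$ exists, so $S_{n+1}$ has no minimal polynomial.

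For the direction (2)$\Rightarrow$(1), assume $f(a_{n+1}) \leq b_{n+1}$; by Theorem~\ref{existenceofminimalpolynomial} it suffices to construct some CMP interpolating $S_{n+1}$. The natural candidate is $g(X) = f(X) + c \cdot X^{m}\prod_{i=1}^{n}(X - a_{i})$ for a suitable monomial power $m$ and constant $c \geq 0$ chosen so that $g \in \Rnn[X]$, $g$ still interpolates $S_n$ (automatic, since the added term vanishes at each $a_i$), and $g(a_{n+1}) = b_{n+1}$ (which fixes $c = (b_{n+1} - f(a_{n+1}))/(a_{n+1}^{m}\prod_{i=1}^{n}(a_{n+1}-a_i)) \geq 0$ since $a_{n+1} > a_i$ for all $i \leq n$ and $b_{n+1} \geq f(a_{n+1})$). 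The crux is to pick $m$ so that $f(X) + c\,X^m\prod_{i=1}^n(X-a_i)$ has all non-negative coefficients. Here I would use the hypothesis $\mathfrak{d}(f)=n$ together with the pair-structure form (\ref{pairstructure}) of $f$: writing $f(X) = \sum_{i=1}^{g_n}(\alpha_i + \beta_i X)X^{m_i}$ with gaps $m_i + 2 \leq m_{i+1}$, one sees that $\Sign(f)$ has exactly the ``alternating blocks of length $\leq 2$'' shape forcing $\mathfrak{d}(f)=n$, and multiplying out $X^m\prod_{i=1}^n(X-a_i)$ produces a polynomial of degree $n+m$ whose sign pattern is $+\,-\,+\,-\cdots$; choosing $m$ large enough that $\deg f < m$ (so the new term sits entirely above $f$), or more delicately choosing $m$ to interleave with the pair structure, makes the sum a CMP — and in fact makes $g$ the minimal polynomial of $S_{n+1}$, since adding a degree-$(n+m)$ term with the right alternating signs raises $\mathfrak{d}$ by exactly $1$ when done minimally. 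I expect the main obstacle to be precisely this coefficient bookkeeping: verifying that the monomial shift $m$ can always be chosen (using $b_{n+1} \geq f(a_{n+1})$, the gap conditions in the pair structure, and possibly the freedom to add $\prod(X-a_i)$ times a short polynomial rather than a single monomial) so that no cancellation drives a coefficient negative; the cleanest route is likely to invoke the optimality of Descartes' rule of signs (Corollary~\ref{realization}) to realize the desired sign sequence with roots exactly $a_1,\dots,a_n$ and then scale, mirroring the argument in the proof of Corollary~\ref{meaningofmu}\ref{manyothers} but pushing the degree up rather than perturbing down.

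Finally, I would double-check that the $g$ produced in (2)$\Rightarrow$(1) indeed satisfies $\mathfrak{d}(g) \leq n+1$ (not merely that \emph{some} CMP interpolates $S_{n+1}$), which would give the stronger conclusion and also re-derive Theorem~\ref{existenceofminimalpolynomial} in this inductive setting; this amounts to checking the pair-structure condition (\ref{pairstructure}) for $g$ with parameter $n+1$, which follows from the gap $\deg f < m$ placing the new pair well above the existing ones.
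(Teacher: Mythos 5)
Your direction (1) $\Rightarrow$ (2) is clean and correct: it uses the weak-minimality characterization of Theorem~\ref{equivalence}(\ref{weakminimality}) exactly as the paper does (the paper routes the theorem through Remark~\ref{proofofexistenceofminimalpolynomial}, i.e., through Theorem~\ref{existenceofminimalpolynomial} and the proof of Theorem~\ref{inductiveupperbound}, but this particular direction is the same easy observation).

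Your direction (2) $\Rightarrow$ (1), however, has a genuine gap. The ansatz $g(X)=f(X)+c\,X^m\prod_{i=1}^{n}(X-a_i)$ with a \emph{single} monomial factor $X^m$ does not work in general, and in particular the variant you state first — ``choosing $m$ large enough that $\deg f < m$ (so the new term sits entirely above $f$) $\ldots$ makes the sum a CMP'' — is simply false: $X^m\prod_{i=1}^{n}(X-a_i)$ has $n+1$ nonzero coefficients in \emph{consecutive} degrees $m,\ldots,m+n$, alternating in sign, so if $m>\deg f$ the coefficients of $g$ at degrees $m+n-1, m+n-3,\ldots$ are strictly negative and are cancelled by nothing. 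The ``interleaving with the pair structure'' alternative also cannot work with a monomial $r$: the pair structure of $f$ places its positive coefficients in pairs separated by gaps of length $\geq 2$, while the negative coefficients of $X^m\prod(X-a_i)$ sit at every other degree in a length-$(n+1)$ window of \emph{consecutive} degrees, so they cannot all land in $\supp(\Sign(f))$ once $n\geq 3$ and $f$ has a genuine gap; and even when they do land there, the required scaling $c=(b_{n+1}-f(a_{n+1}))/(a_{n+1}^m\prod(a_{n+1}-a_i))$ can exceed the ratio of the relevant coefficients of $f$ when $b_{n+1}$ is large. Your parenthetical hedge (``possibly the freedom to add $\prod(X-a_i)$ times a short polynomial rather than a single monomial'') is the right instinct, but it is exactly where the actual work lies, and it is not carried out. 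The paper does this work in the proof of Theorem~\ref{inductiveupperbound}: writing $\supp(\lfloor\Sign(f)\rfloor)=\{d_1>\cdots>d_n\}$, it seeks $g=\sum_{i=1}^n\alpha_{d_i}X^{d_i}+\alpha_kX^k$, solves the resulting $(n+1)\times(n+1)$ Vandermonde system by Cramer's rule, and shows via an explicit determinant estimate that all coefficients are nonnegative once $k$ is large enough; this construction uses only $f(a_{n+1})<b_{n+1}$ (not the existence of a minimal polynomial of $S_{n+1}$), so combined with Theorem~\ref{existenceofminimalpolynomial} it yields (2) $\Rightarrow$ (1). To repair your argument you would need to replace the single monomial by the $n$-term perturbation and reproduce this nonnegativity analysis.
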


We do not include proofs of the above two theorems here.
However, the argument in the proof of Theorem \ref{inductiveupperbound} originates from a careful analysis of the proof given in \cite{minimalpolynomials}.
Furthermore, Algorithm \ref{minimalpolynomialbylinearprogramming} is based on a proof of Theorem \ref{existenceofminimalpolynomial}.
See Remark \ref{proofofexistenceofminimalpolynomial} and \ref{proofofabovecriterion}.

\section{A degree upper bound for minimal polynomials}\label{A degree upper bound for minimal polynomials}
From the very beginning of the study of minimal polynomials, linear programming has been one of the approaches to computing the minimal polynomial of given $S$ (\cite{firstminimalpolynomial}, \cite{Thielcke}).
The idea is simple; suppose we are given $S=\{(a_i,b_i)\}_{i=1}^n \subset \mathbb{Q}_{>0} \times \Qnn$, and it has the minimal polynomial with degree $d$.
Then it suffices to solve the following equation, minimizing $x_d, \ldots, x_0$ inductively in this order:
\begin{align*}
  \begin{bmatrix}
a_{1}^{0} & \cdots & a_{1}^{d} \\
& \vdots & \\
a_{n}^{0} & \cdots & a_{n}^{d} 
\end{bmatrix}
  \begin{bmatrix}
x_{0} \\
 \vdots \\
x_{d} \\
\end{bmatrix}
&=  \begin{bmatrix}
b_{1} \\
 \vdots \\
b_{n} \\
\end{bmatrix}, \\
x_{0}, \ldots, x_{d} &\geq 0. 
\end{align*}

Therefore, if we are just given $S$, the degree upper bounds for the minimal polynomial of $S$ (if it exists) play a prominent role in designing algorithms and measuring their complexity.
Theorem \ref{discreteupperbound} in this section tackles this issue.

We begin with setting up our convention of the complexity of descriptions of rational numbers.
\begin{defi}
 Define $||{n}|| := \lceil \log_{2}(|n|+1) \rceil$ for $n \in \bb{Z}$.
\end{defi} 
In other words, $||{n}||$ is the length of the binary notation of $n$ (ignoring signs).
\begin{defi}
For $x=\frac{q}{p} \in \bb{Q}$, where $p>0$ and $p,q$ are coprime, set $\sz(x) := \max\{||{p}||, ||{q}||\}$.
\end{defi}

Example \ref{baseexample} tells us that $\deg(f)$ for the minimal polynomial $f$ of given $S$ may be large. 
However, by the definition of minimal polynomial, $f$ is at lease $\# S$-sparse, that is, the number of monomials in $f$ with non-zero coefficients should be at most $\# S$.
Therefore, the following degree upper bound for sparse interpolation is instructive:
\begin{thm}[\cite{degreeupperboundforsparseinterpolation}]\label{degreeupperboundforsparseinterpolation}
Let $t \geq 1$, $S=\{(x_i,y_i)\}_{i=1}^{t+1} \subseteq \mathbb{Q}_{>0} \times \mathbb{Q}$.
Let $f(X)$ be a $t$-sparse polynomial (that is, the number of monomials with non-zero coefficients is at most $t$) in $\mathbb{Q}[X]$ interpolating $S$.
Then
\[ \deg (f) \leq \left(t+\log_{\gamma}[(t!)2^{t+1}\lambda \beta (\alpha \lambda)^{t^2}]\right)\left(1+\log_{\gamma}[\alpha \lambda]\right)^{t-1},\]
where $\alpha = \max_{i} x_i$, $\beta=\max_i |{y_i}|$, $\gamma = \{x_j/x_i \mid x_j > x_i\}$, and $\lambda$ is the least common multiple of the denominator of $x_i$'s and $y_i$'s in the reduced form. 
\end{thm}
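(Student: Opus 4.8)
Since Theorem~\ref{degreeupperboundforsparseinterpolation} is quoted from \cite{degreeupperboundforsparseinterpolation}, one may simply cite it; here is the route I would take to reprove it. By padding $f$ with dummy monomials of zero coefficient and of exponent below its support, we may assume $f(X)=\sum_{k=1}^{t}c_kX^{e_k}$ with $e_1<\cdots<e_t$, $c_t\neq 0$, and $\deg f=e_t$; reorder the points so that $0<x_1<\cdots<x_{t+1}$. The $t+1$ interpolation conditions say that $(y_i)_{i}$ lies in the span of the $t$ columns $\big(x_i^{e_k}\big)_i$, $k\in[t]$, which are linearly independent since a generalized Vandermonde of distinct positives has full rank. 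Hence
\[
\Gamma:=\det\begin{bmatrix} x_1^{e_1}&\cdots&x_1^{e_t}&y_1\\ \vdots&&\vdots&\vdots\\ x_{t+1}^{e_1}&\cdots&x_{t+1}^{e_t}&y_{t+1}\end{bmatrix}=0 .
\]
Multiplying column $k\le t$ by $\lambda^{e_k}$ and the last column by $\lambda$ (legitimate because $\lambda x_i,\lambda y_i\in\bb{Z}$) makes all entries integers; consequently every minor of it is an integer, so a nonzero minor has absolute value $\ge 1$. This integrality will be the only arithmetic input.

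\textbf{The key inequality.} Expand $\Gamma=0$ along the bottom row. The coefficient of $x_{t+1}^{e_t}$ is, up to sign, $\Gamma_{t-1}:=\det\big[(x_i^{e_k})_{i\in[t],\,k\in[t-1]}\mid(y_i)_{i\in[t]}\big]$, and $\Gamma_{t-1}\neq 0$: otherwise some $g=\sum_{k=1}^{t-1}d_kX^{e_k}$ would interpolate $x_1,\dots,x_t$, so $f-g$ would be $\le t$-sparse with leading term $c_tX^{e_t}\neq 0$ and yet have the $t$ positive roots $x_1,\dots,x_t$, contradicting Descartes' rule of signs. (Equivalently $\Gamma_{t-1}=c_t\cdot\det(x_i^{e_k})_{i,k\in[t]}$, and the generalized Vandermonde is positive by Schur positivity.) Solving $\Gamma=0$ for this leading term gives
\[
|\Gamma_{t-1}|\,x_{t+1}^{e_t}\le\sum_{k<t}x_{t+1}^{e_k}\,|M_k|+|y_{t+1}|\,|M_0| ,
\]
where $M_k,M_0$ are $t\times t$ sub-determinants built only from the points $x_1,\dots,x_t$. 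Bounding each of them by the rearrangement inequality — the exponent $e_t$ is forced onto the largest remaining node $x_t$, since $x_{t+1}$ has been deleted — gives $|M_k|,|M_0|\le t!\,\beta\,x_t^{e_t}\,(\alpha\lambda)^{O(e_1+\cdots+e_{t-1})}$, whereas integrality gives $|\Gamma_{t-1}|\ge\lambda^{-(e_1+\cdots+e_{t-1}+1)}$. Combining, dividing by $x_t^{e_t}$, and applying $\log_{\gamma}$ (using $x_{t+1}/x_t\ge\gamma$ and $x_{t+1}\le\alpha$) yields an inequality of the shape
\[
e_t\le\log_{\gamma}\!\big((t+1)!\,\lambda\beta\big)+c\,(e_1+\cdots+e_{t-1})\,\log_{\gamma}(\alpha\lambda),
\]
so $\deg f=e_t$ is controlled as soon as $e_1,\dots,e_{t-1}$ are.

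\textbf{The recursion, and the main obstacle.} What remains is to bound the smaller exponents, and this is the delicate part. One would like to lower the sparsity by ``peeling off a term'' of $f$ and inducting, but monomials do not transform nicely under divided differences when the nodes are not in geometric progression, so the naive reductions (subtract $c_1X^{e_1}$, divide by $X^{e_1}$, \dots) feed into the reduced data coefficients whose denominators already involve $e_t$ --- which is circular. The plan is instead to run the recursion inside the determinant hierarchy $\Gamma=\Gamma_t,\Gamma_{t-1},\Gamma_{t-2},\dots$, re-using the Descartes root-counting trick at each level to force the relevant sub-determinants to be nonzero (hence $\ge 1$ after clearing denominators), and tracking how $\alpha,\beta,\gamma,\lambda$ and the factorial factors propagate level by level. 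Unrolling the resulting linear recursion $N_t\le(1+\log_{\gamma}[\alpha\lambda])\,N_{t-1}+(\text{data term})$, with base case $t=1$ --- where $f=cX^e$ forces $(x_2/x_1)^e=y_2/y_1$, hence $e\le\log_{\gamma}(\lambda\beta)$ --- produces both the geometric factor $(1+\log_{\gamma}[\alpha\lambda])^{t-1}$ and the exponent $t^{2}$ in $(\alpha\lambda)^{t^{2}}$. I expect essentially all the effort to lie in this last bookkeeping: pinning down the stated explicit constants (the $(t!)\,2^{t+1}$ prefactor, the exponent exactly $t^{2}$) rather than merely a qualitatively similar weaker bound, and disposing cleanly of the degenerate cases set aside above.
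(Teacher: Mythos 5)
The paper does not prove this statement; it is cited verbatim from Borodin and Tiwari \cite{degreeupperboundforsparseinterpolation}, and the only place the paper engages with the underlying argument is the sketch of Corollary~\ref{degreeupperboundseparateversion}, which describes local modifications to the recurrence appearing in that reference rather than reproducing the proof. So there is no in-paper proof against which to match line by line; what can be said is that your opening moves (determinantal reformulation $\Gamma=0$, clearing denominators by $\lambda$ to force integer minors and hence lower bounds of the form $\lambda^{-(1+\sum D_j)}$, using Descartes' rule of signs to certify a cofactor is nonzero, and a resulting recursion $D_k$ on exponent bounds) are indeed the ingredients visible in the paper's sketch and are consistent with the Borodin--Tiwari machinery.

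That said, your proposal stops exactly where the proof has to do its work, and the mechanism you gesture at for the remaining part is not convincing as stated. You correctly derive a bound of the shape $e_t \le \log_\gamma((t+1)!\lambda\beta) + c\,(e_1+\cdots+e_{t-1})\log_\gamma(\alpha\lambda)$, but then assert that one can ``run the recursion inside the determinant hierarchy $\Gamma=\Gamma_t,\Gamma_{t-1},\dots$.'' The trouble is that only $\Gamma=\Gamma_t$ vanishes; the lower-order determinants $\Gamma_{t-1},\Gamma_{t-2},\dots$ are precisely the ones you proved \emph{nonzero}, and a nonzero minor gives you a lower bound, not a new linear relation to expand. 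Thus the step from ``$e_t$ is controlled once $e_1,\dots,e_{t-1}$ are'' to an actual bound on $e_1,\dots,e_{t-1}$ is missing: you need fresh vanishing conditions at each level (in Borodin--Tiwari these come from selecting appropriate sub-collections of rows/points at stage $k$, with the Descartes argument re-applied to that sub-problem, which is where the $\binom{t+1}{k}$ and $(t!)2^{t+1}$ factors and the $t^2$ in $(\alpha\lambda)^{t^2}$ enter), and your sketch does not construct them. Since the entire content of the stated bound, the explicit prefactor and the exact exponent, lives in this recursion, the gap is genuine: as written, you have reproved the single inequality bounding the top exponent in terms of the lower ones, together with the base case $t=1$, but not the theorem.
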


Actually, taking a close look at the proof in \cite{degreeupperboundforsparseinterpolation}, we can observe that $x_i$'s and $y_i$'s contribute to the degree upper bound independently:
\begin{cor}\label{degreeupperboundseparateversion}
In the setting in Theorem \ref{degreeupperboundforsparseinterpolation}, we further have the following:
\[\deg (f) \leq \left(t+\log_{\gamma}[(t!)2^{t+1}\nu \beta (\alpha \mu)^{t^2}]\right)\left(1+\log_{\gamma}[\alpha \mu]\right)^{t-1},\]
where $\mu$ is the least common multiple of the denominator of $x_i$'s, and $\nu$ is that of $y_i$'s. 
\end{cor}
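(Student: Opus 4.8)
The plan is to revisit the proof of Theorem \ref{degreeupperboundforsparseinterpolation} given in \cite{degreeupperboundforsparseinterpolation} and track exactly where the quantity $\lambda$ (the common denominator of \emph{all} the $x_i$'s and $y_i$'s together) enters, in order to split it into the two independent quantities $\mu$ (common denominator of the $x_i$'s) and $\nu$ (common denominator of the $y_i$'s). Roughly speaking, in that argument one clears denominators to pass from the rational data to integer data: multiplying each abscissa $x_i$ by $\mu$ rescales the variable $X$, and multiplying each ordinate $y_i$ by $\nu$ rescales the target values. Since these two rescalings act on different ``axes'' of the interpolation problem, the original proof's use of a single $\lambda$ bounding both is wasteful, and one should be able to replace every occurrence of $\lambda$ that came from an abscissa by $\mu$ and every occurrence that came from an ordinate by $\nu$.

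Concretely, I would proceed as follows. First, recall that $\mu \mid \lambda$ and $\nu \mid \lambda$, so the claimed bound is formally stronger than Theorem \ref{degreeupperboundforsparseinterpolation}, and it suffices to re-examine the proof rather than derive the corollary from the theorem as a black box. Second, locate in the cited proof the normalization step where $S$ is replaced by an integer-data instance; isolate the two substitutions $X \mapsto X/\mu$ (equivalently $x_i \mapsto \mu x_i \in \bb{Z}$) and $y_i \mapsto \nu y_i \in \bb{Z}$, and observe that the degree of the interpolant is invariant under the first substitution and unchanged by the second. Third, propagate these through the estimates: wherever a factor of $\lambda$ appears multiplying an abscissa-derived term (the $\alpha\lambda$ factors), it should be traceable to the first substitution and hence replaceable by $\mu$; wherever a factor of $\lambda$ multiplies an ordinate-derived term (the lone $\lambda$ in $(t!)2^{t+1}\lambda\beta$), it should be traceable to the second and hence replaceable by $\nu$. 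Finally, assemble the refined inequality, which is exactly the displayed bound with $\lambda$ replaced by $\mu$ in the $\alpha\lambda$ positions and by $\nu$ in the remaining position.

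The main obstacle I anticipate is bookkeeping rather than conceptual: the proof in \cite{degreeupperboundforsparseinterpolation} may not separate the roles of abscissae and ordinates cleanly, so some care is needed to verify that no single intermediate quantity genuinely depends on $\mathrm{lcm}(\mu,\nu)$ in a way that cannot be untangled --- for instance if at some point a common denominator of a \emph{linear combination} of $x_i$'s and $y_i$'s is taken. If such a step exists, one must check that the relevant combination still has denominator dividing $\mu$ alone (or $\nu$ alone), which should hold because the linear algebra underlying interpolation keeps the abscissa data inside the Vandermonde matrix and the ordinate data inside the right-hand side. Assuming that separation survives, the corollary follows by a line-by-line refinement of the original argument, and I would present only the points where the two proofs diverge rather than reproducing the whole of \cite{degreeupperboundforsparseinterpolation}.
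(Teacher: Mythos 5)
Your rescaling observation is the right idea, and in fact it yields a complete proof shorter than the paper's, but your opening sentence undersells it: you claim one cannot derive the corollary from Theorem \ref{degreeupperboundforsparseinterpolation} as a black box, yet one can, simply by applying the theorem not to $S$ but to the rescaled instance $S' := \{(\mu x_i,\, \nu y_i)\}_{i=1}^{t+1}$ with interpolant $f'(X) := \nu f(X/\mu) \in \bb{Q}[X]$, which is still $t$-sparse, interpolates $S'$, and satisfies $\deg(f') = \deg(f)$. Since $S'$ has integer data, the theorem's $\lambda$ equals $1$ for $S'$, while $\alpha$, $\beta$, $\gamma$ become $\mu\alpha$, $\nu\beta$, $\gamma$; substituting into the theorem's bound gives the displayed inequality verbatim, with none of the bookkeeping you worry about in your last paragraph. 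The paper does something different: it opens the proof of \cite{degreeupperboundforsparseinterpolation} at the lower bound $|\det \mathbf{Z}^{i_1,\ldots,i_k}_{1,\ldots,k}| \ge \lambda^{-(1+\sum_{j<k} D_j)}$, sharpens it to $\ge \nu^{-1}\mu^{-\sum_{j<k} D_j}$, and re-solves the resulting recurrence. That internal route is more laborious but makes explicit which part of the bound each of $\mu$ and $\nu$ is responsible for; it is also the step you would actually need to locate if you insist on going ``inside'' the proof, since the quantity to modify is this determinant lower bound rather than a global denominator-clearing normalization step of the sort your plan presupposes.
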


\begin{proof}[Sketch]
We just mention how we change the proof of Theorem 4.5 in \cite{degreeupperboundforsparseinterpolation}.
Immediately after the formula $(4.6)$ in \cite{degreeupperboundforsparseinterpolation}, they deduced $\det \mathbf{Z}^{i_1, \ldots, i_k}_{1,\ldots,k}=0$ or 
\[|{\det \mathbf{Z}^{i_1, \ldots, i_k}_{1,\ldots,k}}| \geq \left(\frac{1}{\lambda} \right)^{1+\sum_{j=1}^{k-1}D_j}.\]
We change this bounding term $\left(\frac{1}{\lambda} \right)^{1+\sum_{j=1}^{k-1}D_j}$ to $\frac{1}{\nu \mu^{\sum_{j=1}^{k-1}D_j}}$.
Then, the occurrences of the term $\lambda^{1+\sum_{i=1}^{k-1}D_i}$ in the latter inequalities are replaced with $\nu \mu^{\sum_{i=1}^{k-1}D_i}$, and the last inequality on page 80 of \cite{degreeupperboundforsparseinterpolation} becomes 
\[ \gamma^{(e_k-t)} \geq (t!) \binom{t+1}{k} \nu \mu^{\sum_{i=1}^{k-1}D_i}\frac{\prod_{i>j}(x_{b_i}-x_{b_j})}{\prod_{i>j}(x_{v_i}-x_{v_j})} \alpha^{\sum_{i=1}^{k-1}D_i}\beta.\]

Since each $(x_{b_i}-x_{b_j})$ is at most $\alpha$ and each $(x_{v_i}-x_{v_j})$ is at least $1/\mu$, we can change the subsequent recurrence in \cite{degreeupperboundforsparseinterpolation} to the following:
\[D_k = t+\log_{\gamma}\left\{ (t!)2^{t+1} \nu \beta (\alpha\mu)^{t^2 + \sum_{i=1}^{k-1}D_i} \right\}.\]

Thus, solving the recurrence in the same manner as \cite{degreeupperboundforsparseinterpolation}, we see the claim follows.
\end{proof}

Therefore, if $f$ is the minimal polynomial of $S=\{(a_i,b_i)\}_{i=1}^n \subset \mathbb{Q}_{>0} \times \Qnn$ and if $f$ is in addition $(n-1)$-sparse, then we have $\deg(f) \leq exp(n,A) poly(B)$, where $A= \max_i \sz(a_i)$, $B=\max_i \sz(b_i)$, by the following calculation:
\begin{align*}
&\left(n+\log_{\gamma}[(n!)2^{n+1}\nu \beta (\alpha \mu)^{n^2}]\right)\left(1+\log_{\gamma}[\alpha \mu]\right)^{n-1} \\
=& \left(n+\frac{\log_{2}[(n!)2^{n+1}\nu \beta (\alpha \mu)^{n^2}]}{\log_2 \gamma} \right)\left(1+\frac{\log_{2}[\alpha \mu]}{\log_2 \gamma}\right)^{n-1}\\
\leq & \left(n+\frac{poly(n,A,B)}{\log_2 \gamma} \right)\left(1+\frac{2A}{\log_2 \gamma}\right)^{n-1},\\
& \log_2 \gamma \geq \min_{j > i}\left\{1,(a_{j}/a_{i}-1)\right\} \geq 2^{-2A}.
\end{align*}
This gives a degree upper bound for the unique CMP interpolating given $S$ if it exists by Corollary \ref{uniqueCMPisminimalpolynomial}.
However, the minimal polynomial $f$ of $S$ can have $n$-terms, and this case is not covered by the theory of sparse interpolation in $\mathbb{Q}[X]$; if we do not stick to CMP's, then for any natural numbers $e_1 < \cdots < e_n$, there exists an $n$-sparse polynomial of the form $\sum_{j=1}^n c_j X^{e_j}$ interpolating $S$.
In other words, it is trivial to find $\# n$-sparse interpolating given $n$ points, and we cannot hope degree upper bounds for it.

Still, we can obtain an upper bound of the same order $exp(n,A) poly(B)$ for the case when $\mathfrak{d}(f)$, $n$, and the number of terms of $f$ coincide.

Towards the bound, we introduce the following operation on sign sequences:
\begin{defi}
Let $s\in \Sp$ be a sign sequence.
Divide $\supp(s)$ into the intervals $X_0,\ldots,X_k$ in the manner presented in Proposition \ref{rmk:mucalculation}.
\begin{enumerate}
\renewcommand{\labelenumi}{(\theenumi)}
\item For each $i \in [k]$, put
\[
\floor{X_i}:=\begin{cases}
\{\min X_i-1\}\amalg X_i& (\text{$\#X_i$ : odd})\\
X_i&(\text{otherwise})\text.
\end{cases}
\]
$\floor{s}\in \Sp$ is defined by
\[
\supp(\floor{s})={X_0} \amalg \floor{X_{1}}\amalg \cdots\amalg \floor{X_k}.
\]
\item For each $i \in [k]$, put
\[
\ceil{X_i}:=\begin{cases}
X_i\amalg \{\max X_i+1\}& (\text{$\#X_i$ : odd})\\
X_i&(\text{otherwise})\text.
\end{cases}
\]
$\ceil{s}\in \Sp$ is defined by
\[
\supp(\ceil{s})={X_0}\amalg \ceil{X_{1}} \amalg \cdots\amalg \ceil{X_k}.
\]
\end{enumerate}
\end{defi}
\begin{eg}
\begin{enumerate}
\renewcommand{\labelenumi}{(\theenumi)}
\item
If $s=\underbrace{\plus}_{l_0}\zero\zero\underbrace{\plus}_{\text{$l_1$ : odd}}\zero\underbrace{\plus\plus}_{\text{$l_2$ : even}}$,
then
\begin{align*}
\floor{s}&=\plus\zero\plus\plus\zero\plus\plus,\\
\ceil{s}&=\plus\zero\zero\plus\plus\plus\plus.
\end{align*}
\item
If $s=\underbrace{}_{l_0}\zero\underbrace{\plus\plus\plus}_{\text{$l_1$ : odd}}\zero\zero\underbrace{\plus}_{\text{$l_2$ : odd}}$,
then
\begin{align*}
\floor{s}&=\plus\plus\plus\plus\zero\plus\plus,\\
\ceil{s}&=\zero\plus\plus\plus\plus\zero\plus\plus.
\end{align*}
\end{enumerate}
\end{eg}

The following observation is useful when we manipulate interpolation CMPs:

\begin{lemm}\label{survivingcoefficients}
Let $s \in \Sp$.
\begin{enumerate}
\renewcommand{\labelenumi}{(\theenumi)}
\item Let $\supp(\floor{s})=\{d_1>\cdots >d_m\}$.
We have $d_{2i+1}\in \supp(s)$ for $2i+1 \in  [m]$.
\item Let $\supp(\ceil{s})=\{d_1>\cdots >d_m\}$.
We have $d_{2i}\in \supp(s)$ for $2i\in [m]$.
\end{enumerate}
\end{lemm}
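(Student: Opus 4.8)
## Proof Proposal

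The plan is to analyze the structure of $\floor{s}$ and $\ceil{s}$ interval-by-interval, using the explicit description of these operations in terms of the decomposition $\supp(s) = X_0 \amalg \cdots \amalg X_k$ from Proposition \ref{rmk:mucalculation}. The key point is that both $\floor{s}$ and $\ceil{s}$ modify each $X_i$ (for $i \in [k]$) into a block of \emph{even} length by adjoining at most one new index at the bottom (for $\floor{\cdot}$) or at the top (for $\ceil{\cdot}$), while $X_0$ is left untouched. So the support of $\floor{s}$, listed in decreasing order, decomposes into consecutive runs coming from $\floor{X_k}, \floor{X_{k-1}}, \ldots, \floor{X_1}, X_0$, each run having even length except possibly the last (from $X_0$). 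I would track the parity of the position index $j$ of $d_j$ as we pass from one run to the next.

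First, for part (1): process the runs from the top. Within the run coming from $\floor{X_i}$ with $i \in [k]$, the indices that are \emph{not} in $\supp(s)$ are exactly the adjoined bottom index $\min X_i - 1$ when $\#X_i$ is odd (and none otherwise). Since each such run has even length, the "spurious" index sits at an even offset within its run, measured from the top of that run. Combined with the fact that all runs strictly above it also have even length, the spurious index $\min X_i - 1$ occupies an \emph{even}-numbered position $d_{2i'}$ in the global decreasing enumeration. Hence every \emph{odd}-indexed $d_{2i+1}$ avoids all spurious indices and therefore lies in $\supp(s)$. For the trailing run coming from $X_0$: no new indices were adjoined there, so every element of that run is already in $\supp(s)$, and the claim holds trivially for those positions.

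For part (2), the argument is the mirror image: $\ceil{X_i}$ adjoins the top index $\max X_i + 1$ when $\#X_i$ is odd. Enumerating $\supp(\ceil{s}) = \{d_1 > \cdots > d_m\}$ in decreasing order, within each even-length run the adjoined top index sits at an \emph{odd} offset from the top of its run; again since all runs above have even length, it lands at an odd global position $d_{2i'+1}$. Therefore every \emph{even}-indexed $d_{2i}$ is a genuine element of $\supp(s)$, and as before the $X_0$-run contributes only genuine indices.

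The main obstacle — really the only thing requiring care — is the bookkeeping of parities: one must be precise that the runs from $\floor{X_i}$ (resp. $\ceil{X_i}$) for $i \in [k]$ all have even length, so that the parity of a position within a run agrees with its parity in the global list, and that $X_0$ appears as the \emph{last} run (lowest indices) so it never shifts the parities of the $d_j$ coming from the $X_i$ with $i \geq 1$. I would set up clean notation for the starting position of each run and verify the parity claim by a short induction on the run index, rather than an ad hoc case check; the rest is immediate from the definitions of $\floor{X_i}$ and $\ceil{X_i}$.
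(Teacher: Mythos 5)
The paper states this lemma without proof (it is introduced only as ``the following observation is useful''), so there is no paper proof to compare against; your job here was simply to supply one, and you did so correctly. The argument is sound: for each $i \in [k]$, the block $\floor{X_i}$ (resp.\ $\ceil{X_i}$) has even length, and its single spurious element $\min X_i - 1$ (resp.\ $\max X_i + 1$), when present, sits at the bottom (resp.\ top) of its block, hence at an even (resp.\ odd) local position in the decreasing enumeration of that block. Since the blocks $\floor{X_k},\ldots,\floor{X_{i+1}}$ lying above all have even length, the cumulative shift is even and local parity equals global parity, so the spurious elements land at even (resp.\ odd) global positions $d_j$; the trailing block from $X_0$ contains no spurious elements and, being last, cannot disturb the parities of the higher blocks. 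One implicit point worth making explicit is the disjointness of the blocks as subsets of $\omega$: the gap condition $\min X_i - \max X_{i-1} \geq 2$ guarantees that $\min X_i - 1 > \max X_{i-1}$ and $\max X_i + 1 < \min X_{i+1}$, so no adjoined index collides with another block, and the decreasing enumeration indeed decomposes as you describe. With that noted, the bookkeeping is complete and both parts follow.
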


The following bound allows us to take an inductive approach:
\begin{thm}\label{inductiveupperbound}
Let $n \geq 1$. 
Assume $f$ be the minimal polynomial of $S=\{(a_{i},b_{i})\}_{i=1}^{n+1} \subset \bb{R}_{>0} \times \Rnn$ $(0<a_{1}<\cdots<a_{n+1})$, and
  $S_{n}:=\{(a_{i},b_{i})\}_{i=1}^{n}$ has a minimal polynomial $f_{0} \in \Rnn[X]$.
Furthermore, assume $\mathfrak{d}(f_{0})=n$ and $f_{0} \neq f$.
Let $s := \Sign(f_{0})$ and $\supp(\lfloor s \rfloor)=\{ d_{1} >  \cdots > d_{n}\}$. (Note that $\#\supp(\lfloor s \rfloor) = \mathfrak{d}(f_{0})$.)

Define the $(n+1) \times n$-matrix $M$ and the $(n+1)$-dimensional vector $\mathbf{b}$ by
\[M := \begin{bmatrix}
 a_{1}^{d_{n}} & \cdots & a_{1}^{d_{1}}\\
   \vdots & & \vdots \\
  a_{n+1}^{d_{n}} & \cdots & a_{n+1}^{d_{1}}\\
\end{bmatrix}
\quad \text{and} \quad
\mathbf{b} := \begin{bmatrix}
    b_1 \\
    \vdots \\
    b_{n+1}
\end{bmatrix}.
\]
For each $j \in [n]$, let $\Delta_{j}$ be the $(n+1) \times n$-matrix obtained from $M$ by replacing the $(n-j+1)$-th column, which consists of $a_i^{d_j}$, with $\mathbf{b}$.
For $i \in [n+1]$, let $\Delta_{j}^{(i)}$ be the $(n \times n)$-matrix obtained by removing the $i$-th row of $\Delta_{j}$.
Let $D_{j}^{(i)} := \det\Delta_{j}^{(i)}$.

Then, for each odd $j \in [n]$ and any $i \in [n+1]$, $D_{j}^{(i)} > 0$.
Furthermore, if $k$ is the least natural number satisfying $k > d_1$ and
\[ k \geq \frac{\log(nD_{j}^{(i)}/D_{j}^{(n+1)})}{\log(a_{n+1}/a_{i})}\]
for any odd $j \in [n]$ and $i \in [n]$,
then we have $\deg(f) \leq k$.
\end{thm}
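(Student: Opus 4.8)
The plan is to reduce the whole statement to Cramer's rule, Descartes' rule of signs, and the classical fact that a generalized Vandermonde matrix $\bigl(x_p^{\,e_q}\bigr)_{p,q}$ with $0<x_1<\dots<x_r$ and $e_1<\dots<e_r$ has positive determinant. First I would fix notation. Since $\lfloor\,\cdot\,\rfloor$ only enlarges supports, $\supp(f_0)=\supp(s)\subseteq\supp(\lfloor s\rfloor)=\{d_1,\dots,d_n\}$, so I may write $f_0(X)=\sum_{j=1}^n c_jX^{d_j}$ with $c_j\in\Rnn$, and $c_j>0$ for odd $j$ by Lemma~\ref{survivingcoefficients}(1). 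As $f_0$ is the minimal polynomial of $S_n$ with $\mathfrak{d}(f_0)=n$ and $S_{n+1}$ has a minimal polynomial, Theorem~\ref{abovecriterion} gives $f_0(a_{n+1})\le b_{n+1}$; put $\rho:=b_{n+1}-f_0(a_{n+1})\ge 0$. Write $M^{(i)}$ for the $n\times n$ matrix obtained from $M$ by deleting row $i$ (so $\det M^{(i)}>0$), and $\widetilde M$ for the $(n+1)\times(n+1)$ matrix obtained from $M$ by appending the column $(a_\ell^{\,k})_{\ell=1}^{n+1}$, whose columns carry the distinct exponents $d_n<\dots<d_1<k$ (so $\det\widetilde M>0$).

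\emph{Step 1 (the inequalities $D_j^{(i)}>0$ for odd $j$).} For each $i\in[n+1]$ I would let $f_0^{(i)}(X)=\sum_{j=1}^n u_j^{(i)}X^{d_j}$ be the unique polynomial supported on $\{d_1,\dots,d_n\}$ interpolating $\{(a_\ell,b_\ell):\ell\neq i\}$ (unique since $\det M^{(i)}\neq 0$); Cramer's rule gives $u_j^{(i)}=D_j^{(i)}/\det M^{(i)}$, so it suffices to prove $u_j^{(i)}>0$ for odd $j$. Then I would study $h^{(i)}:=f_0^{(i)}-f_0$, supported on $\{d_1,\dots,d_n\}$, vanishing at the $n-1$ points $a_\ell$ $(\ell\in[n]\setminus\{i\})$, with $h^{(i)}(a_{n+1})=\rho$. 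If $h^{(i)}=0$ then $u_j^{(i)}=c_j>0$. Otherwise $h^{(i)}$ has at most $n$ terms and at least $n-1$ positive roots, so Descartes' rule forces $\SC(\Sign(h^{(i)}))=n-1$, whence $h^{(i)}$ has exactly the $n$ terms $d_1,\dots,d_n$ with strictly alternating signs and exactly the $n-1$ positive roots $a_\ell$; as these lie below $a_{n+1}$, $h^{(i)}$ has constant sign on $(a_n,\infty)$, and $h^{(i)}(a_{n+1})=\rho>0$ (were $\rho=0$, $a_{n+1}$ would be an $n$-th positive root), so the leading coefficient of $h^{(i)}$ — the coefficient of $X^{d_1}$ — is positive, and by alternation the coefficient of $X^{d_j}$ is positive for all odd $j$, giving $u_j^{(i)}>c_j\ge 0$ and hence $D_j^{(i)}=u_j^{(i)}\det M^{(i)}>0$.

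\emph{Step 2 (the bound $\deg f\le k$).} I would let $g(X)=\sum_{j=1}^n y_jX^{d_j}+y_0X^{k}$ be the unique polynomial supported on $\{d_1,\dots,d_n,k\}$ interpolating $S_{n+1}$ (unique since $\det\widetilde M\neq 0$), and aim to show $g\in\Rnn[X]$; then $g$ is an interpolating CMP of degree $\le k$, and Theorem~\ref{equivalence}\,(\ref{quiteweakminimality}) gives $g(t)\ge f(t)$ for all large $t$, forcing $\deg f\le\deg g\le k$. Cramer's rule gives $y_0=\rho\,\det M^{(n+1)}/\det\widetilde M\ge 0$ and $y_j=P_j(k)/\det\widetilde M$, where $P_j(e):=\sum_{i=1}^{n+1}(-1)^{n+1+i}a_i^{\,e}D_j^{(i)}$. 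Substituting $b_\ell=f_0(a_\ell)$ for $\ell\le n$ and $b_{n+1}=f_0(a_{n+1})+\rho$ into the determinant defining $P_j(e)$, expanding multilinearly in the $\mathbf b$-column, and discarding the resulting determinants that repeat an exponent, I would obtain
\[
P_j(e)=c_j\det\widetilde M_e+(-1)^{j}\,\rho\,\det N_j(e),
\]
where $\widetilde M_e$ is $\widetilde M$ with $k$ replaced by $e$ and $N_j(e)$ is the $n\times n$ generalized Vandermonde with nodes $a_1,\dots,a_n$ and exponents $(\{d_1,\dots,d_n\}\setminus\{d_j\})\cup\{e\}$ (so $\det N_j(e)>0$ once $e>d_1$). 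For \emph{even} $j$ both summands are $\ge 0$, so $y_j\ge 0$ for free. For \emph{odd} $j$ I would instead invoke Step~1: from $D_j^{(i)}>0$, $P_j(k)\ge a_{n+1}^{\,k}D_j^{(n+1)}-\sum_{i=1}^n a_i^{\,k}D_j^{(i)}$, and the hypothesis $k\ge\log\!\bigl(nD_j^{(i)}/D_j^{(n+1)}\bigr)/\log(a_{n+1}/a_i)$ for $i\in[n]$ gives $a_{n+1}^{\,k}D_j^{(n+1)}\ge n\,a_i^{\,k}D_j^{(i)}$ for every $i$, hence $a_{n+1}^{\,k}D_j^{(n+1)}\ge\sum_{i=1}^n a_i^{\,k}D_j^{(i)}$ and $P_j(k)\ge 0$.

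\emph{Main obstacle.} I expect Step~1 to be the crux: one must extract positivity of \emph{every} odd-indexed coefficient of $h^{(i)}$ from the single sign fact $h^{(i)}(a_{n+1})>0$, which is exactly where the rigidity of Descartes' rule at equality — forcing the full alternating sign pattern — is used, and where Theorem~\ref{abovecriterion} enters. A more subtle structural point is that the numeric hypothesis on $k$ says nothing about the even-indexed coefficients of $g$; the right way to handle those is the $\mathbf b$-decomposition above, in which they acquire the favourable sign $(-1)^j$ and become non-negative automatically, while the bound on $k$ is needed only to make the $i=n+1$ term of $P_j(k)$ (for odd $j$) outweigh the remaining $n$ terms.
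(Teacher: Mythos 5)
Your proposal is correct, and Step~2 (construct $g$ supported on $\{d_1,\ldots,d_n,k\}$, apply Cramer's rule to $\widetilde M=[M\ \mathbf a^k]$, expand $P_j(k)$ by cofactors, and use the hypothesis on $k$ to make the $i=n+1$ term dominate) is exactly the paper's argument. Where you diverge is Step~1. The paper proves $D_j^{(i)}>0$ for odd $j$ directly: writing $\mathbf b = M\mathbf c + \rho\,e_{n+1}$ (with $\mathbf c$ the coefficient vector of $f_0$ and $\rho = b_{n+1}-f_0(a_{n+1})>0$) and using multilinearity plus vanishing of repeated columns, one gets the closed form $D_j^{(i)} = c_{d_j}\det M_{n+1}$ for $i=n+1$ and $D_j^{(i)} = c_{d_j}\det M_i + \rho\det M_{i,j}$ for $i\in[n]$, each a nonnegative combination of positive Vandermonde minors. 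You instead introduce the auxiliary interpolants $f_0^{(i)}$, observe $D_j^{(i)}/\det M^{(i)}=u_j^{(i)}$ by Cramer, and then extract the sign of $u_j^{(i)}$ by applying Descartes' rule at equality to $h^{(i)}=f_0^{(i)}-f_0$: it has at most $n$ terms, at least $n-1$ forced positive roots, hence strictly alternating signs with positive leading (odd-indexed) coefficients. Both routes are sound; the paper's is a one-line determinant computation and is what you then reuse anyway in Step~2 to derive $P_j(e)=c_{d_j}\det\widetilde M_e+(-1)^j\rho\det N_j(e)$, so you end up doing both. Your Descartes route is conceptually appealing — it shows not just $D_j^{(i)}>0$ but the full rigidity of $h^{(i)}$ — but at the cost of a slightly longer argument. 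One small imprecision worth fixing: you set $\rho\ge 0$ and only establish $\rho>0$ en passant inside Step~1; since the hypothesis $f_0\neq f$ forces $f_0(a_{n+1})\neq b_{n+1}$ (else $f_0$ would interpolate $S$ and equal $f$ by uniqueness of minimal polynomials), you should record $\rho>0$ up front, as the paper does, because it is also needed in Step~2 (e.g.\ to conclude $\deg g = k$).
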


\begin{proof}
Because $f_{0} \neq f$, we have $f_{0}(a_{n+1}) < b_{n+1}$.
Let 
\[f_{0}(X) = c_{d_{1}}X^{d_{1}} + \cdots + c_{d_{n}}X^{d_{n}}.\]

Fix an odd $j \in [n]$ and an arbitrary $i \in [n+1]$.
First we prove $D_{j}^{(i)} > 0$.
Let $M_{i}$ be the $n \times n$-matrix obtained by removing the $i$-th row from the $(n+1) \times n$-matrix
$M$.
We observe that $b_l = f_0(a_l) = c_{d_{1}}a_l^{d_{1}} + \cdots + c_{d_{n}}a_l^{d_{n}}$ for $l \in [n]$.
Using elementary column operations on $\Delta_j^{(i)}$ and the multilinearity of the determinant, we see that
\[D_j^{(i)} = \begin{cases}
    c_{d_j} \det M_{n+1} & (i = n+1) \\
    c_{d_j} \det M_i + {} \\
    \quad (b_{n+1} - f_0(a_{n+1})) \det M_{i,j} & (i \in [n]),
\end{cases}
\]
where $M_{i,j}$ is the $(n-1) \times (n-1)$-matrix obtained from $M_{i}$ by deleting the bottom row and the $(n-j+1)$-th column.
We get $c_{d_{j}} > 0$ by Lemma \ref{survivingcoefficients}.
We have $\det M_{i}, \det M_{i,j} > 0$ since they are minors of a Vandermonde matrix with positive entries.
Thus, we obtain $D^{(i)}_{j} > 0$.

Next, we show $\deg(f) \leq k$.
By the minimality of $f$, it is enough to show that there exists $g \in \Rnn[X]$ interpolating $S$ and satisfying $\deg(g) \leq k$.

We search $g$ of the following form:
\[g = \sum_{i=1}^{n}\alpha_{d_{i}} X^{d_{i}} + \alpha_{k} X^{k}.\]
We obtain $g \in \bb{R}[X]$ interpolating $S$ by solving the linear equation
\[\Gamma [\alpha_{d_n}, \ldots, \alpha_{d_1}, \alpha_k]^t = \mathbf{b},\]
where
$\mathbf{a}^k:=\left[a_{1}^k, \ldots, a_{n+1}^k\right]^{t}$ and $\Gamma := \left[ M \, \mathbf{a}^k \right]$.
Hence, the remaining task is to show that all $\alpha_{d_{i}}$ and $\alpha_{k}$ of the solution are nonnegative.
We have $\det\Gamma > 0$ since it is a minor of a Vandermonde matrix of positive entries.
By Cramer's formula, we obtain $\alpha_{d_{j}}=E_j/\det\Gamma$ and $\alpha_k = E' / \det \Gamma$, where
$E_j := \det \left[\Delta_j \ \mathbf{a}^k \right]$ and $E' := \det [M \, \mathbf{b}]$.
Hence, it suffices to prove $E_j \ge 0$ for $j \in [n]$ and $E' \ge 0$.
By reasoning similar to that for $D_j^{(i)}$ above, we deduce that $E_j > 0$ for an even $j \in [n]$ and $E' > 0$.

Now, consider odd $j \in [n]$.
Taking a cofactor expansion, we get
\begin{align*}
 E_j&= D_{j}^{(n+1)}a_{n+1}^{k} + \sum_{i=1}^{n} (-1)^{n+1+i}D_{j}^{(i)}a_{i}^{k}\\
 &= D_{j}^{(n+1)}a_{n+1}^{k}  \left( 1 + \sum_{i=1}^{n} (-1)^{n+1+i}\frac{D_{j}^{(i)}}{D_{j}^{(n+1)}}\left(\frac{a_{i}}{a_{n+1}}\right)^{k}\right).
\end{align*}
By the definition of $k$, each of the $n$-terms in the sum is bounded by $1/n$, and hence the result follows.
\end{proof}

\begin{rmk}\label{proofofexistenceofminimalpolynomial}
    Note that the proof above and Theorem \ref{existenceofminimalpolynomial} imply Theorem \ref{abovecriterion}.
\end{rmk}

\begin{thm}\label{discreteupperbound}
Let $n \geq 2$, $S=\{(a_{i},b_{i})\}_{i=1}^{n}$ ($0<a_{1}<\cdots<a_{n}$, and $a_{i}, b_{i} \in \bb{Q}$ for each $i \in [n]$), and assume $f \in \Qnn[X]$ is the minimal polynomial of $S$.
Let $\alpha, \beta, \gamma, \mu, \nu$ be the parameters as in Theorem \ref{degreeupperboundforsparseinterpolation} and Corollary \ref{degreeupperboundseparateversion}, that is,
$\alpha = \max_i a_i = a_n$, $\beta = \max_i b_i=b_n$, $\gamma=\min \{a_j / a_i \mid a_j > a_i\}$, $\mu$ is the least common multiple of the denominators of $a_i$'s and $\nu$ is that of $b_i$'s.
Then 
\begin{align*}
\deg(f) \leq (n-1)! \log_{\gamma}\left((n+1)!\beta \nu\right)\left(\log_{\gamma}(\alpha \mu)\right)^{n-2}
\end{align*}

\end{thm}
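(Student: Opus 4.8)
The plan is to induct on $n$. The base case $n=2$ is read off from Example \ref{baseexample}, where $\deg(f)$ is either at most $1$ or equals $\lceil\log(b_2/b_1)/\log(a_2/a_1)\rceil\le\log_\gamma(b_2/b_1)+1\le\log_\gamma(\beta\nu)+1$ (using $\log(a_2/a_1)\ge\log\gamma$ and $1/b_1\le\nu$), which is of the claimed order. For the inductive step the first move is to reduce to the case $\mathfrak{d}(f)=n$: if $\mathfrak{d}(f)\le 1$ then $f$ is a constant and $\deg(f)\le 0$, and if $2\le m:=\mathfrak{d}(f)<n$ then $f$ is already the minimal polynomial of $S_m:=\{(a_i,b_i)\}_{i=1}^{m}$ (it interpolates $S_m$ and $\mathfrak{d}(f)\le m$), so the induction hypothesis bounds $\deg(f)$ in terms of the parameters of $S_m$; since passing to a subset only decreases $\alpha,\beta,\mu,\nu$ and only increases $\gamma$, and since $\log_\gamma(\alpha\mu)\ge 1$ always (see the last paragraph), the right-hand side of the theorem is monotone enough in $n$ for this to imply the bound for $S$.

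So assume $\mathfrak{d}(f)=n$, and put $S_{n-1}:=\{(a_i,b_i)\}_{i=1}^{n-1}$. Then $f$ is not the minimal polynomial of $S_{n-1}$, but $S_{n-1}$ does have one, say $f_0$ (it is interpolated by $f$; Theorem \ref{existenceofminimalpolynomial}), so $f_0\ne f$. Moreover $\mathfrak{d}(f_0)=n-1$: if $\mathfrak{d}(f_0)\le n-2$, then Corollary \ref{meaningofmu} makes $f_0$ the unique CMP interpolating $S_{n-1}$, forcing $f=f_0$, a contradiction. Hence Theorem \ref{inductiveupperbound} applies with its ``$n$'' taken to be our $n-1$: writing $s=\Sign(f_0)$, $\supp(\lfloor s\rfloor)=\{d_1>\cdots>d_{n-1}\}$, and using the determinants $D_j^{(i)}$ defined there, it gives $\deg(f)\le k$ for every natural $k$ with $k>d_1$ and $k\ge\log\big((n-1)D_j^{(i)}/D_j^{(n)}\big)/\log(a_n/a_i)$ for all odd $j\in[n-1]$ and all $i\in[n-1]$. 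Note $d_1=\deg(\lfloor s\rfloor)=\deg(f_0)$, which the induction hypothesis bounds by the theorem's expression at $n-1$ with the parameters of $S_{n-1}$ (again dominated by those of $S$).

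It remains to estimate these constraints and choose a suitable $k$. Since $\log(a_n/a_i)\ge\log(a_n/a_{n-1})\ge\log\gamma>0$ and the constraint is vacuous when its argument is $<1$, it suffices to bound $\log_\gamma\big((n-1)D_j^{(i)}/D_j^{(n)}\big)$ from above. Clear denominators simultaneously in the $(n-1)\times(n-1)$ matrices $\Delta_j^{(i)}$ and $\Delta_j^{(n)}$ --- which share the same columns, with row $i$, resp.\ row $n$, deleted --- by scaling the $b$-column by $\nu$ and each $a_l^{d_p}$-column by $\mu^{d_p}$. This preserves $D_j^{(i)}/D_j^{(n)}$; the scaled $\Delta_j^{(n)}$ is an integer matrix with positive (hence $\ge 1$) determinant, since $D_j^{(n)}>0$ is part of Theorem \ref{inductiveupperbound}; and the scaled $\Delta_j^{(i)}$ has entries $\le\beta\nu$ (the one $b$-column; here $b_l\le b_n=\beta$ because $f$, being non-constant, is strictly increasing) or $\le(\alpha\mu)^{d_1}$ (the $n-2$ $a$-columns; $a_l\mu\le\alpha\mu$, $\alpha\mu\ge 1$, $d_p\le d_1$), so by the Leibniz expansion its determinant is at most $(n-1)!\,\beta\nu\,(\alpha\mu)^{(n-2)d_1}$. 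Therefore
\[
\frac{(n-1)D_j^{(i)}}{D_j^{(n)}}\ \le\ (n-1)(n-1)!\,\beta\nu\,(\alpha\mu)^{(n-2)d_1}\ \le\ n!\,\beta\nu\,(\alpha\mu)^{(n-2)d_1},
\]
so its $\log_\gamma$ is at most $\log_\gamma(n!\,\beta\nu)+(n-2)\,d_1\log_\gamma(\alpha\mu)$. Taking $k=\big\lceil\log_\gamma(n!\,\beta\nu)+(n-2)\,d_1\log_\gamma(\alpha\mu)\big\rceil$ --- which exceeds $d_1$ since $(n-2)d_1\log_\gamma(\alpha\mu)\ge d_1$ and $\log_\gamma(n!\,\beta\nu)>0$ --- gives $\deg(f)\le k$; substituting the inductive bound $d_1\le(n-2)!\log_\gamma(n!\,\beta\nu)(\log_\gamma(\alpha\mu))^{n-3}$ and simplifying (absorbing the additive error into the gap between $n!$ and $(n+1)!$ and between the exponents $n-3$ and $n-2$, using $\log_\gamma(\alpha\mu)\ge 1$) yields the stated estimate.

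The inequality $\log_\gamma(\alpha\mu)\ge 1$ used throughout is in fact sharpened to $\log_\gamma(\alpha\mu)\ge n-1$: indeed $\alpha\mu=a_n\mu\ge a_n/a_1$ (as $a_1\mu$ is a positive integer, $a_1$'s denominator dividing $\mu$), and $a_n/a_1=\prod_{i=1}^{n-1}(a_{i+1}/a_i)\ge\gamma^{\,n-1}$ because each consecutive ratio is one of the ratios defining $\gamma$. I expect the main obstacle to be the bookkeeping: tracking how $\alpha,\beta,\gamma,\mu,\nu$ transform under restriction (to $S_{n-1}$ in the main step and to $S_m$ in the reduction step), and checking that the recursion $\deg(f)\lesssim (n-2)\deg(f_0)\log_\gamma(\alpha\mu)+\log_\gamma(n!\,\beta\nu)$, unrolled against the ansatz $\deg(f_0)\le(n-2)!\log_\gamma(n!\,\beta\nu)(\log_\gamma(\alpha\mu))^{n-3}$, closes up to exactly the claimed shape --- the determinant estimates (Leibniz above, clearing denominators below) being the substantive input that gives the recursion this form.
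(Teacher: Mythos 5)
Your proposal follows essentially the same route as the paper: induct on the number of points, invoke Theorem \ref{inductiveupperbound} with $f_0$ the minimal polynomial of the immediate prefix, bound $D_j^{(i)}/D_j^{(\text{top})}$ by expanding/clearing denominators to get $\leq (m)!\,\beta\nu(\alpha\mu)^{(m-1)d_1}$, and close the recursion with $\log_\gamma(\alpha\mu)\geq 1$. Two cosmetic differences: the paper inducts over the chain $S_1\subset\cdots\subset S_n$ with the parameters $\alpha,\beta,\gamma,\mu,\nu$ held fixed at their values for the full $S$ throughout (so the degenerate case $f_{m+1}=f_m$ is trivial and no ``parameters shrink under restriction'' bookkeeping is needed), whereas you reduce to $\mathfrak{d}(f)=n$ and appeal to the IH with the smaller set's own parameters; and the paper's base case is $m=1$ ($\deg(f_1)\leq 0$) rather than $n=2$ via Example \ref{baseexample}.

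One place you are more careful than the paper, and where you correctly sense trouble: the integrality/ceiling of $k$ in Theorem \ref{inductiveupperbound}. The paper simply asserts it is ``enough to show'' the real-valued constraints are $\leq\delta_{m+1}$ and never addresses that $k$ must be an integer $>d_1$; your attempt to absorb the $+1$ from $\lceil\cdot\rceil$ into the $n!\to(n+1)!$ and $(n-3)\to(n-2)$ gaps requires $\log_\gamma(n!\beta\nu)\geq 1$, i.e.\ $\gamma\leq n!\beta\nu$, which need not hold when the $a_i$ are widely spaced. This is not a defect of your argument relative to the paper's — the bound in Theorem \ref{discreteupperbound} is in fact off by an additive $O(1)$ as stated (e.g.\ $n=2$, $a_1=1$, $a_2=100$, $b_1=1$, $b_2=2$ gives $\deg f=1$ but $\log_\gamma(6\beta\nu)=\log_{100}12<1$), and the paper's own proof has exactly the gap you identified. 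For the complexity consequences drawn from this theorem the discrepancy is immaterial, but if you want a literally correct statement you would need a ceiling or a $+1$ somewhere in the bound.
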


\begin{proof}
First, note that $\beta \nu \geq 1$ and $\alpha \mu \geq \gamma > 1$, and therefore $\log_{\gamma} (\beta \nu) \geq 0$ and $\log_{\gamma} (\alpha \mu)\geq 1$.
Let $S_{m}:=\{(a_{i},b_{i})\}_{i=1}^{m}$ for $m \in [n]$, and let $f_{m}$ be the minimal polynomial of $S_{m}$.
Let 
\[\delta_m := (m-1)! \log_{\gamma}\left((m+1)!\beta \nu\right)\left(\log_{\gamma}(\alpha\mu)\right)^{m-2}.\]
By induction on $m \geq 1$, we prove 
\[\deg(f_{m}) \leq \delta_{m}.\] 
Then the case $m=n$ gives the claim.

The base case when $m=1$ is clear since $\deg(f_{1})\leq 0$.
Assume the statement holds for $m$ and consider $m+1$.
If $f_{m+1}=f_{m}$, then the statement is immediate from the induction hypothesis, so we focus on the case when $f_{m+1}\neq f_{m}$.
We use the notations in Theorem \ref{inductiveupperbound} applied to $f =
f_{m+1}$, $S=S_{m+1}$, $f_{0} = f_{m}$, $S_{n}=S_{m}$.
Note that $d_{1}$ there equals to $\deg(f_{m})$ in this context.
It is enough to show that
\begin{align}
\frac{\log\left(m\left(D_{j}^{(i)}/D_{j}^{(m+1)}\right)\right)}{\log(a_{m+1}/a_{i})} \leq \delta_{m+1} \label{aim}
\end{align}
for any odd $j \in [m]$ and arbitrary $i \in [m]$.

Indeed, we observe
\[ \log \gamma \leq \log(a_{m+1}/a_i).\]
Furthermore, expanding $D_{j}^{(i)}$ to a sum of $m!$-many products of $(m-1)$ terms of the form $a_{i}^{d_{j}}$ and one term of the form $b_{l}$, we obtain
\[D^{(i)}_j \leq m! \alpha^{d_1(m-1)} \beta.\]
On the other hand, expanding $D_{j}^{(m+1)}$ similarly as $D_{j}^{(i)}$ above, we have $(\mu^{d_1 (m-1)}\nu)D_{j}^{(m+1)} \in \mathbb{Z}$. 
Knowing $D_{j}^{(m+1)} > 0$, we have $D_{j}^{(m+1)} \geq (\mu^{d_1 (m-1)}\nu)^{-1}$.
Thus, we can bound the LHS of (\ref{aim}) as follows:
\begin{align*}
\frac{\log\left(m\left(D_{j}^{(i)}/D_{j}^{(m+1)}\right)\right)}{\log(a_{m+1}/a_{i})} 
&\leq \frac{\log\left(m\times m! \alpha^{d_1(m-1)} \beta \times \mu^{d_1 (m-1)}\nu\right)}{\log \gamma } \\
&\leq \log_{\gamma}\left( (m+1)! (\alpha \mu)^{d_1(m-1)} \beta \nu \right) \\
&\leq d_1 (m-1) \log_{\gamma}(\alpha \mu) + \log_{\gamma}\left((m+1)!\beta \nu\right).
\end{align*}

By induction hypothesis, $d_1 \leq \delta_m$.
Therefore, we can bound the last bound further:
\begin{align*}
&d_1 (m-1) \log_{\gamma}(\alpha \mu) + \log_{\gamma}\left((m+1)!\beta \nu\right)\\
\leq& (m-1)! \log_{\gamma}\left((m+1)!\beta \nu\right)\left(\log_{\gamma}(\alpha\mu)\right)^{m-2} \times (m-1) \log_{\gamma}(\alpha \mu) + \log_{\gamma}\left((m+1)!\beta \nu\right)\\
\leq& m!\left( \log_{\gamma}(\alpha \mu) \right)^{m-1}\log_{\gamma}((m+1)!\beta \nu)\\
\leq& \delta_{m+1}.
\end{align*}
As for the second last inequality, note that $\log_{\gamma}(\alpha \mu) \geq 1$ since $a_j/a_i \leq \alpha \mu$.
\end{proof}

For the discussion on whether the above degree upper bound is optimal or not, see section \ref{Open Problems}. 


As a quick application of Theorem \ref{discreteupperbound}, we consider the complexity of a straightforward algorithm using linear programming to compute the minimal polynomial $f$ of given $S$: Algorithm \ref{minimalpolynomialbylinearprogramming}.
Such algorithms are ones of the earliest in this field (\cite{firstminimalpolynomial}, \cite{Thielcke}).

\begin{figure}[t]
  \begin{algorithm}[H]
    \caption{Minimal Polynomial by Linear Programming}
    \label{minimalpolynomialbylinearprogramming}
    \begin{algorithmic}[1]
      \Require $n \geq 1$, $S=\{(a_i,b_i)\}_{i=1}^n \subset \bb{Q}_{>0} \times \Qnn$ with $a_{1}<\cdots<a_{n}$.
      \Ensure The minimal polynomial of $S$ if it exists. Otherwise, ``none."
      \State $U \leftarrow$ the degree upper bound given in Theorem \ref{discreteupperbound}
      \State $q \leftarrow 1$
      \State $r \leftarrow$ ``not yet"
      \While{$r=$ ``not yet"}\label{whilecondition}
      \State\label{solvelinearprogramming} Solve the following linear programming:
      \begin{align*}
  \begin{bmatrix}
a_{1}^{0} & \cdots & a_{1}^{q} \\
& \vdots & \\
a_{n}^{0} & \cdots & a_{n}^{q} 
\end{bmatrix}
  \begin{bmatrix}
x_{0} \\
 \vdots \\
x_{q} \\
\end{bmatrix}
&=  \begin{bmatrix}
b_{1} \\
 \vdots \\
b_{n} \\
\end{bmatrix}, \\
x_{0}, \ldots, x_{q} &\geq 0. 
 \end{align*}
 \If{such $[x_0, \ldots, x_q]^t$ exists}
 \State\label{degreeofminimalpolynomial} $d \leftarrow q$
 \State\label{found} $r \leftarrow$ ``found"
 \ElsIf{$q=U$}
 \State {\bf return} ``none"
 \Else \Comment{The case when $q<U$}
 \State\label{endofwhile} $q \leftarrow q+1$
 \EndIf
      \EndWhile
 \State\label{minimizetheleading} {\bf minimize} $y_d$ {\bf subject to}:
 \begin{align*}
  \begin{bmatrix}
a_{1}^{0} & \cdots & a_{1}^{d} \\
& \vdots & \\
a_{n}^{0} & \cdots & a_{n}^{d} 
\end{bmatrix}
  \begin{bmatrix}
y_{0} \\
 \vdots \\
y_{d} \\
\end{bmatrix}
&=  \begin{bmatrix}
b_{1} \\
 \vdots \\
b_{n} \\
\end{bmatrix}, \\
y_{0}, \ldots, y_{d} &\geq 0\text.
 \end{align*}
\State {\bf return} $y_{d}X^d + \cdots + y_{0} X^0 \in \Qnn[X]$
    \end{algorithmic}
  \end{algorithm}
\end{figure}
\begin{prop}
    Algorithm \ref{minimalpolynomialbylinearprogramming} outputs the minimal polynomial $f \in \bb{Q}_{\geq 0}[X]$ of $S$ if it exists and ``none'' if there is none. 
\end{prop}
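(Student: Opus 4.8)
The plan is to prove correctness of Algorithm~\ref{minimalpolynomialbylinearprogramming} in three stages: first argue the while loop terminates and identifies the right degree, then argue the final minimization recovers the minimal polynomial itself, and finally check the ``none'' branch. Throughout I will use Theorem~\ref{existenceofminimalpolynomial} (existence of a minimal polynomial is equivalent to existence of \emph{some} interpolating CMP), Theorem~\ref{discreteupperbound} (the degree bound $U$), and Theorem~\ref{equivalence}(\ref{quiteweakminimality}) (the minimal polynomial has the least leading behaviour among interpolating CMPs of the same degree).

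First I would analyze the while loop. At iteration with parameter $q$, the linear program is feasible if and only if there exists a CMP of degree $\le q$ interpolating $S$. I claim that the loop exits with $r =$ ``found'' and $d = q$ precisely when $q$ is the \emph{least} degree of an interpolating CMP, and this happens for some $q \le U$ whenever an interpolating CMP exists at all. Indeed, if $S$ has an interpolating CMP, then by Theorem~\ref{existenceofminimalpolynomial} it has a minimal polynomial $f$, and by Theorem~\ref{discreteupperbound} $\deg(f) \le U$; so the linear program is feasible at $q = \deg(f) \le U$, hence the loop exits at some $q \le \deg(f)$ (in fact at exactly the least feasible $q$, which is $\le \deg(f)$). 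Conversely, if no interpolating CMP exists, every linear program in the loop is infeasible, so the loop runs until $q = U$ and returns ``none'' --- this disposes of the last branch at the same time. Thus after the loop, either the algorithm has returned ``none'' correctly, or we have $d = $ the minimum degree $d^\ast$ of an interpolating CMP, and $S$ does have a minimal polynomial.

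Next I would handle the minimization step. Assume $S$ has a minimal polynomial $f$; I must show the final LP outputs exactly $f$. The key point is that $\deg(f) = d^\ast = d$: we have $\deg(f) \ge d^\ast$ trivially since $f$ is an interpolating CMP, and $\deg(f) \le d^\ast$ because by Theorem~\ref{equivalence}(\ref{quiteweakminimality}) (or (\ref{weakminimality})), for any interpolating CMP $g$ the leading coefficient of $g - f$ is nonnegative; taking $g$ to be an interpolating CMP of degree $d^\ast$ forces $\deg(f) \le d^\ast$. So the minimization is carried out over the correct degree $d = \deg(f)$. Now among all CMPs of degree $\le d$ interpolating $S$ --- equivalently all feasible points of the LP --- Theorem~\ref{equivalence}(\ref{quiteweakminimality}) says $f$ has the smallest coefficient of $X^d$: any interpolating CMP $g$ satisfies $\mathrm{lc}(g-f) \ge 0$, i.e.\ the $X^d$-coefficient of $g$ is $\ge$ that of $f$ (this covers both the case $\deg g = d$ and $\deg g < d$, where the $X^d$-coefficient of $g$ is $0 \ge$ that of $f$, but minimality of $d$ among interpolating CMPs actually forces the $X^d$-coefficient of $f$ to be positive). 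Hence $f$ is \emph{a} minimizer of $y_d$. Finally I must argue uniqueness of the returned polynomial: any minimizer $g$ of the LP has the same $X^d$-coefficient as $f$, so $g - f$ is an interpolating polynomial of degree $\le d - 1$ vanishing at $a_1, \dots, a_n$; but if $g \ne f$ were also a CMP then $g - f \in \mathbb{R}[X]\setminus\{0\}$ would have $n$ positive roots while $\SC(\Sign(g-f)) \le \mathfrak{d}(f) \le n$ and the leading coefficient sign is constrained --- more directly, by Theorem~\ref{equivalence}(\ref{strongminimality}), any interpolating CMP $g \ne f$ satisfies $g(t) > f(t)$ for $t > a_n$, which is incompatible with $g - f$ having degree $\le d-1$ and the same leading term structure unless $g = f$. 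So the LP solution, though the solver returns one optimal vertex, is forced to be $f$.

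The step I expect to be the main obstacle is pinning down the argument that $\deg(f)$ equals the minimum feasible degree $d$, and that the minimizer of $y_d$ in the final LP is \emph{unique} and equals $f$ --- the subtlety being that a generic LP solver returns some optimal vertex, so I need Theorem~\ref{equivalence} to guarantee that the whole optimal face is the single point $f$. The cleanest route is: show $f$ is optimal via condition (\ref{quiteweakminimality}); then show any other optimal $g$ would be an interpolating CMP with $g \ne f$ and $\deg(g) \le d$ whose leading ($X^d$) coefficient equals that of $f$, so that $g(t) - f(t)$ is $o(t^d)$, contradicting condition (\ref{strongminimality}) which forces $g(t) - f(t) > 0$ and, combined with $g$ being a CMP of degree $\le d$ and Descartes/Corollary~\ref{meaningofmu}, actually forces $g = f$. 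A careful writeup would interleave these with the loop-termination bookkeeping, but no heavy computation is needed --- everything is a consequence of the structural theorems already established.
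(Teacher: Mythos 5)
Your analysis of the while loop (the LP at $q$ is feasible iff some interpolating CMP of degree $\le q$ exists, Theorem \ref{existenceofminimalpolynomial} plus Theorem \ref{discreteupperbound} handle termination and the ``none'' branch, and $\deg(f)$ equals the minimal feasible degree by Theorem \ref{equivalence}) is sound and matches the paper, as is your observation that $f$ itself is an optimizer of the final LP via condition (\ref{quiteweakminimality}). The gap is exactly in the step you flag as the obstacle: showing every optimizer of the final LP equals $f$. Your ``more direct'' route does not work. If $g\neq f$ is an interpolating CMP with the same $X^{d}$-coefficient as $f$, then $\deg(g-f)\le d-1$, and Theorem \ref{equivalence}(\ref{strongminimality}) only gives $g(t)>f(t)$ for $t>a_{n}$; this is perfectly compatible with $g-f$ being a low-degree polynomial positive beyond $a_{n}$ --- indeed (\ref{quitestrongminimality}) says $g-f=r(X)\prod_{i}(X-a_{i})$ with $r>0$ on $\bb{R}_{>0}$, and nothing in that forces $\deg(g-f)\ge d$. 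Likewise Corollary \ref{meaningofmu} cannot ``force $g=f$'': in the relevant case $\mathfrak{d}(f)=n$ there are infinitely many interpolating CMPs, so uniqueness cannot come from interpolation alone but only from the degree/leading-coefficient constraint. So the asserted incompatibility is not established, and with it the claim that the solver's output is $f$.

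What closes the gap --- and is the paper's argument --- is Lemma \ref{achieve}. Suppose $g\neq f$ is feasible for the final LP with $X^{d}$-coefficient at most that of $f$. Since $f-g\neq 0$ vanishes at $a_{1},\dots,a_{n}$, Descartes' rule gives $n\le \SC(\Sign(f-g))$; since $\deg(g)\le d=\deg(f)$, the sequence $\Sign(f-g)$ is obtained from $\Sign(f)$ by changing entries to $0$ or $-$, so $\SC(\Sign(f-g))\le \mathfrak{d}(f)\le n$, whence $\SC(\Sign(f-g))=\mathfrak{d}(f)$. Lemma \ref{achieve} then forces $\deg(f-g)\ge\deg(f)=d$ with negative top sign, i.e.\ the $X^{d}$-coefficient of $g$ strictly exceeds that of $f$ --- contradicting the choice of $g$ (in your formulation, contradicting $\deg(g-f)\le d-1$). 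You gesture at ``$\SC(\Sign(g-f))\le\mathfrak{d}(f)\le n$ and the leading coefficient sign is constrained,'' which is the right direction, but without invoking Lemma \ref{achieve} (or an equivalent statement that an extremal sign pattern must end in $-$ at degree $\ge\deg(f)$) the uniqueness of the optimum, and hence correctness of the returned polynomial, is not proved.
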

\begin{proof}
Clearly, Algorithm \ref{minimalpolynomialbylinearprogramming} returns ``none'' if there does not exist a minimal polynomial of $S$. 

Consider the case when $S$ has a minimal polynomial $f$.
Then there are $\deg(f)$-times of repetition of solving linear programming during lines \ref{whilecondition}-\ref{endofwhile}, and it finds $\deg(f)$ as $d$ in line \ref{degreeofminimalpolynomial} since Theorem \ref{equivalence} (\ref{strongminimality}) implies that $\deg(f)$ is the minimum of degrees of CMPs interpolating $S$.

The latter line \ref{minimizetheleading} finds an interpolating CMP $g$ of degree $d$ whose leading coefficient attains the minimum, and actually $g=f$.
Indeed, if $g \neq f$, then $f-g$ has $a_1, \ldots, a_n$ as their roots. 
Since 
\[\SC(\Sign(f-g)) \leq \mathfrak{d}(f) \leq n,\]
$\SC(\Sign(f-g))=n=\mathfrak{d}(f)$ by Descartes' rule of signs. 
Hence, by Lemma \ref{achieve} and $\deg(f)=\deg(g)$, the leading coefficient of $g$ is greater than that of $f$, which is a contradiction.

Thus the algorithm outputs the minimal polynomial of $S$.
\end{proof}

\begin{rmk}\label{proofofabovecriterion}
    Note that the argument above, verifying that the output polynomial is the minimal polynomial, also gives a proof of Theorem \ref{existenceofminimalpolynomial} (cf. \cite{Thielcke}).
\end{rmk}

\begin{thm}\label{thm:complexity-of-algo-by-LP}
    Let $U$ be the upper bound in Theorem \ref{discreteupperbound}.
    Let $A= \max_i \sz(a_i)$ and $B=\max_i \sz(b_i)$.
    Algorithm \ref{findtheminimalpolynomial} runs in $\poly(U)$-time. 
    In particular, it runs in $\expt(n,A)\poly(B)$-time.
\end{thm}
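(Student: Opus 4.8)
The plan is to trace through Algorithm~\ref{minimalpolynomialbylinearprogramming} line by line and bound the cost of each component by a polynomial in $U$, using the standard fact that linear programming over the rationals runs in time polynomial in the bit-size of the input (via the ellipsoid method or interior-point methods). The key observation that makes everything go through is that $U$ itself already dominates every relevant quantity: the number of iterations of the \textbf{while}-loop, the degree of every polynomial solved for, and — after one checks the bit-sizes — the bit-size of every number appearing in any of the linear programs.

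First I would count iterations: the loop at lines~\ref{whilecondition}--\ref{endofwhile} increments $q$ from $1$ and halts no later than $q=U$, so there are at most $U$ iterations, and the final minimization at line~\ref{minimizetheleading} is executed once. Next I would bound the size of each linear program encountered. In iteration $q\le U$, the constraint matrix is the $n\times(q+1)$ matrix with entries $a_i^{\,e}$ for $e\le q\le U$; writing $a_i=p_i/q_i$ in lowest terms with $\sz(a_i)\le A$, each entry $a_i^{\,e}$ has numerator and denominator of bit-size at most $UA$, so the bit-size of the whole matrix is $\poly(n,U,A)=\poly(U)$ (since $n\le U$ by the trivial bound $\deg f\ge 0$ together with $n\le\mathfrak d(f)+1$, or simply because $n$ appears in the formula for $U$; in any case $n=O(\log U)$ suffices). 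The right-hand side $\mathbf b$ has bit-size $\poly(n,B)$. Hence each feasibility LP, and the single optimization LP at line~\ref{minimizetheleading} (whose matrix has degree $d=\deg(f)\le U$), has input bit-size $\poly(U,A,B)$, and since $A,B$ themselves enter the definition of $U$ we have $\poly(U,A,B)=\poly(U)$. By polynomial-time solvability of linear programming, each LP is solved in $\poly(U)$-time, and the solution it returns has bit-size $\poly(U)$ as well.

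Multiplying the per-iteration cost $\poly(U)$ by the iteration count $\le U+1$, and adding the cost of the one final optimization LP, gives a total running time of $\poly(U)$, which is the first assertion. For the second assertion I would simply substitute the explicit form of $U$ from Theorem~\ref{discreteupperbound},
\[
U=(n-1)!\,\log_{\gamma}\!\bigl((n+1)!\,\beta\nu\bigr)\bigl(\log_{\gamma}(\alpha\mu)\bigr)^{n-2},
\]
and estimate its bit-size: $\log_2\gamma\ge 2^{-2A}$ (as computed in the text following Corollary~\ref{degreeupperboundseparateversion}), while $\log_2(\alpha\mu)$, $\log_2(\beta\nu)$, and $\log_2((n+1)!)$ are all $\poly(n,A,B)$; therefore $\log_\gamma(\alpha\mu)\le 2^{2A}\poly(n,A)$ and $\log_\gamma((n+1)!\beta\nu)\le 2^{2A}\poly(n,A,B)$, so that
\[
U\le (n-1)!\,2^{2A}\poly(n,A,B)\,\bigl(2^{2A}\poly(n,A)\bigr)^{n-2}=\expt(n,A)\poly(B),
\]
and consequently $\poly(U)=\expt(n,A)\poly(B)$.

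The main obstacle I anticipate is not conceptual but bookkeeping: one must be careful that the \emph{outputs} of the intermediate LPs (the feasible points, and in particular the value $d$ and the minimizing vector $[y_0,\dots,y_d]$) have bit-size bounded by $\poly(U)$ and not something larger, since these are not inputs we control but are produced by the LP solver. This is handled by the standard quantitative form of LP solvability — a feasible/optimal basic solution of a rational LP has bit-size polynomial in the bit-size of the constraint data — but it is the point where the argument has to invoke a precise complexity-theoretic statement about linear programming rather than a black box. A secondary, minor point is confirming that ``none'' is returned within the iteration budget, which is immediate from the \textbf{elsif} branch testing $q=U$. I would also note, for cleanliness, that the statement of the theorem as written refers to Algorithm~\ref{findtheminimalpolynomial} whereas the algorithm under discussion here is Algorithm~\ref{minimalpolynomialbylinearprogramming}; the proof above is for the latter, and the analogous bound for Algorithm~\ref{findtheminimalpolynomial} would be established separately once that algorithm is presented.
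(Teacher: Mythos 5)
Your proposal is correct and takes essentially the same approach as the paper: bound the iteration count by $U$, bound the bit-size of each linear program by $\poly(n,A,B,d)=\poly(U)$, invoke polynomial-time solvability of rational LP, and then substitute the explicit form of $U$ to obtain the $\expt(n,A)\poly(B)$ estimate. You are also right that the theorem statement's reference to Algorithm~\ref{findtheminimalpolynomial} is a typo for Algorithm~\ref{minimalpolynomialbylinearprogramming}, which is what the paper's own proof analyzes; your additional care about the bit-size of LP \emph{outputs} is a detail the paper leaves implicit but that the standard quantitative theory of LP does cover.
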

\begin{proof}
Algorithm \ref{minimalpolynomialbylinearprogramming} executes linear programming $O(U)$-times repeatedly. 
Each time of the repetition during lines \ref{whilecondition}-\ref{endofwhile}, the size of the input to the linear programming is bounded above by
\[ O(dA)\times n(d+1) + nB \leq \poly(n,A,B,d).\]
Linear programming can be solved in polynomial time, so the whole costs $\poly(n,A,B,d)$-time.
The last optimization step \ref{minimizetheleading} is another linear programming, and this is solved in $\poly(n,A,B,d)$-time by a similar reasoning.


Hence, the whole procedure takes at most $\poly(n,A,B,d) \leq \poly(U)$-time (note that $\poly(n,A,B) = O(U)$).

In the case when $S$ does not have a minimal polynomial, the algorithm also runs in $\poly(U)$-time (equivalent to the case when $d=U$ above), and this completes the analysis.
\end{proof}

\section{An order structure of sign sequences}\label{An order structure of sign sequences}
In this section, we introduce a useful order structure on $\Sp$ which neatly expresses our manipulation of polynomials in the algorithm presented in section \ref{An algorithm}.

We begin by considering the following subset of $\Sp$:

\begin{defi}
Set 
\[
\Spc:=\{s\in \Sp\mid \mathfrak{d}(s)=\#\supp(s)\},
\]
that is, for $s \in \Sp$, $s \in \Spc$ if and only if each $l_{i}$ $(i \in [k])$ described in Proposition \ref{rmk:mucalculation} is even.
\end{defi}

\begin{rmk}
 We point out that $\{\supp(s) \mid s \in \Spc\}$ already appeared in \cite[Theorem 3]{Interpolationbypolynomialswithnonnegativecoefficients} in the characterization of the set of vertices of a clam giving surface hyperplanes.
 See the last part of \S \ref{An algorithm} for the relation between \cite{Interpolationbypolynomialswithnonnegativecoefficients} and our paper.
\end{rmk}

The following is immediate from the definitions:
\begin{lemm}\label{firstthingsonSpc}
For $s\in \Sp$, the following hold:
\begin{enumerate}
\renewcommand{\labelenumi}{(\theenumi)}
\item $\floor{s}, \ceil{s}\in \Spc$.
\item $\mathfrak{d}(s)=\mathfrak{d}(\floor{s})=\mathfrak{d}(\ceil{s})$.
\item $s\in \Spc \Longleftrightarrow \floor{s}=\ceil{s}\Longleftrightarrow \floor{s}=s=\ceil{s}$.
\end{enumerate}
\end{lemm}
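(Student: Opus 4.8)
\textbf{Proof proposal for Lemma \ref{firstthingsonSpc}.}

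The plan is to unwind the definitions of $\lfloor s\rfloor$ and $\lceil s\rceil$ together with the formula for $\mathfrak{d}$ in Proposition \ref{rmk:mucalculation}. First I would fix $s\in\Sp$ and take the canonical decomposition $\supp(s)=X_0\amalg X_1\amalg\cdots\amalg X_k$ into maximal intervals as in Proposition \ref{rmk:mucalculation}, with $l_i=\#X_i$. The crucial observation is that the gap condition $\min X_i-\max X_{i-1}\ge 2$ is maintained (indeed strengthened, or left unchanged) when we pass to $\lfloor s\rfloor$ or $\lceil s\rceil$: adding $\min X_i-1$ to $X_i$ (for $i\ge 1$, when $l_i$ is odd) can at worst shrink the preceding gap from the left by one, but since the intervals were separated by at least two zeros, the modified interval is still separated from $X_{i-1}$ by at least one zero — and here one must check that $\lfloor X_{i-1}\rfloor$ (which only grows on the \emph{left}) does not collide with $X_i$, which is clear. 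Similarly $\lceil\,\cdot\,\rceil$ only grows intervals on the right. Thus $\supp(\lfloor s\rfloor)={X_0}\amalg\lfloor X_1\rfloor\amalg\cdots\amalg\lfloor X_k\rfloor$ is exactly the canonical decomposition of $\lfloor s\rfloor$, with block sizes $l_0,\ l_1'=2\lceil l_1/2\rceil,\ \ldots,\ l_k'=2\lceil l_k/2\rceil$ (since adding one element to an odd-size interval makes it even, and even-size intervals are untouched); likewise for $\lceil s\rceil$.

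Given this, part (1) is immediate: every block of $\lfloor s\rfloor$ (resp. $\lceil s\rceil$) except possibly $X_0$ has even size, so by the criterion recorded in the definition of $\Spc$, $\lfloor s\rfloor,\lceil s\rceil\in\Spc$. Part (2) follows by applying the formula of Proposition \ref{rmk:mucalculation} to all three sequences: $\mathfrak{d}(s)=l_0+\sum_{i=1}^{k}2\lceil l_i/2\rceil$, while $\mathfrak{d}(\lfloor s\rfloor)=l_0+\sum_{i=1}^{k}2\lceil l_i'/2\rceil=l_0+\sum_{i=1}^{k}l_i'=l_0+\sum_{i=1}^{k}2\lceil l_i/2\rceil$, and identically for $\lceil s\rceil$. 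For part (3): if $s\in\Spc$ then every $l_i$ ($i\in[k]$) is already even, so $\lfloor X_i\rfloor=X_i=\lceil X_i\rceil$ for all $i$, whence $\lfloor s\rfloor=s=\lceil s\rceil$; this gives $s\in\Spc\Rightarrow(\lfloor s\rfloor=s=\lceil s\rceil)\Rightarrow(\lfloor s\rfloor=\lceil s\rceil)$. Conversely, if $\lfloor s\rfloor=\lceil s\rceil$, then comparing supports and using that $\lfloor X_i\rfloor$ extends $X_i$ to the left while $\lceil X_i\rceil$ extends it to the right, equality forces $\lfloor X_i\rfloor=X_i=\lceil X_i\rceil$ for every $i\ge 1$, i.e. every $l_i$ is even, i.e. $s\in\Spc$. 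This closes the cycle of implications.

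I do not expect a genuine obstacle here; the only point requiring a little care is the bookkeeping in the first paragraph — verifying that the prescribed modifications do not merge adjacent blocks and hence that the displayed decomposition of $\supp(\lfloor s\rfloor)$ (resp. $\supp(\lceil s\rceil)$) is genuinely the \emph{canonical} one to which Proposition \ref{rmk:mucalculation} applies. Once that is in place, (1), (2), (3) are all one-line consequences of the formula for $\mathfrak{d}$ and the evenness of the new block sizes. So the "hard part," such as it is, is purely the combinatorial verification that $\lfloor\,\cdot\,\rfloor$ and $\lceil\,\cdot\,\rceil$ are well-defined on $\Sp$ in the sense of respecting the interval structure — which is really already built into the definition.
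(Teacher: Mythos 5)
Your central claim is false, and it is precisely the point you flagged as "the only point requiring a little care." You assert that the blocks $X_0,\lfloor X_1\rfloor,\ldots,\lfloor X_k\rfloor$ (and similarly for $\lceil\cdot\rceil$) remain the \emph{canonical} decomposition of $\supp(\lfloor s\rfloor)$, justified by "the intervals were separated by at least two zeros." This misreads the gap condition: $\min X_i-\max X_{i-1}\ge 2$ guarantees only \emph{one} intervening zero (the number of zeros between consecutive blocks is the gap minus one). Consequently, when the gap is exactly $2$ and $l_i$ is odd, $\min\lfloor X_i\rfloor=\min X_i-1=\max X_{i-1}+1$, so $\lfloor X_{i-1}\rfloor$ and $\lfloor X_i\rfloor$ become adjacent and merge into a single interval in the true canonical decomposition of $\lfloor s\rfloor$. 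There is also an overlooked boundary phenomenon: when $X_0=\emptyset$ and $\min X_1=1$ with $l_1$ odd, $\lfloor X_1\rfloor$ contains $0$ and becomes the new $X_0$. The paper's own example immediately after the definition exhibits exactly this: for $s=0{+}{+}{+}00{+}$ one has $\lfloor s\rfloor={+}{+}{+}{+}0{+}{+}$, whose canonical $X_0$ is $\{0,1,2,3\}$, not $\emptyset$. So Proposition~\ref{rmk:mucalculation} is being applied to a decomposition that is not canonical, which is not a priori valid.

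The lemma is nevertheless true and your computations happen to land on the right numbers; the missing argument is that merging does no harm. Concretely: each canonical block of $\lfloor s\rfloor$ is a union of one or more consecutive sets among $X_0,\lfloor X_1\rfloor,\ldots,\lfloor X_k\rfloor$, and each $\lfloor X_i\rfloor$ with $i\ge 1$ has even cardinality; hence every canonical block of $\lfloor s\rfloor$ that does not contain $0$ has even size (a sum of evens), which gives $\lfloor s\rfloor\in\Spc$ directly, and since for a block of even size $n$ the contribution $2\lceil n/2\rceil=n$ is additive over merges, the value of $\mathfrak{d}$ is unchanged. The same reasoning applies to $\lceil s\rceil$. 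Your argument for (3) is essentially sound, but the "forces $\lfloor X_i\rfloor=X_i=\lceil X_i\rceil$ for every $i$" step deserves a word: for instance, pick the largest $i$ with $l_i$ odd and observe $\max X_i+1\in\supp(\lceil s\rceil)\setminus\supp(\lfloor s\rfloor)$. The paper gives no proof (it states the lemma is immediate), so there is no approach to compare against, but as written your argument has a real gap in parts (1) and (2).
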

Now, we introduce an order $\prec_0$ on $\Spc$:
\begin{defi}[Order structure on $\Spc$]
Let $s,s^{\prime}\in \Spc$, and write
\[
\supp(s)=\{d_1>\cdots>d_m\},\ \supp(s^{\prime})=\{d'_1>\cdots>d'_{m'}\}.
\]
The order structure $\preceq_0$ on $\Spc$ is defined by
\begin{align*}
s\preceq_0 s^{\prime}:\Iff
m\leq m' \ \& \
d_i\leq d_i' \ (i\in [m]).
\end{align*}
\end{defi}

\begin{rmk}\label{homomorphism}
 By definition, for $s,s^{\prime}\in \Spc$, it follows that if $s \preceq_0 s^{\prime}$, then $\mathfrak{d}(s) \leq \mathfrak{d}(s^{\prime})$.
\end{rmk}

The following is immediate:
\begin{lemm}\label{conservation}
For $s\in \Sp$, $\floor{s}\preceq_0 \ceil{s}$.
\end{lemm}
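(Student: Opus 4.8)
The claim to prove is Lemma~\ref{conservation}: for $s \in \Sp$, we have $\floor{s} \preceq_0 \ceil{s}$. The plan is to unwind the definitions of $\floor{s}$ and $\ceil{s}$ interval by interval and then check the two conditions defining $\preceq_0$, namely that $\#\supp(\floor{s}) \leq \#\supp(\ceil{s})$ and that, writing the supports in decreasing order, each element of $\supp(\floor{s})$ is bounded above by the corresponding element of $\supp(\ceil{s})$.

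First I would observe that $\#\supp(\floor{s}) = \#\supp(\ceil{s})$. Indeed, by Lemma~\ref{firstthingsonSpc}(1)--(2) both $\floor{s}$ and $\ceil{s}$ lie in $\Spc$ and satisfy $\mathfrak{d}(\floor{s}) = \mathfrak{d}(s) = \mathfrak{d}(\ceil{s})$; since membership in $\Spc$ means $\mathfrak{d}(\,\cdot\,) = \#\supp(\,\cdot\,)$, the two supports have equal cardinality, so the cardinality condition in $\preceq_0$ holds (with equality). This reduces the problem to the pointwise inequality on the enumerated supports.

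For the pointwise comparison I would use the explicit decomposition $\supp(s) = X_0 \amalg X_1 \amalg \cdots \amalg X_k$ from Proposition~\ref{rmk:mucalculation}. Recall $\supp(\floor{s}) = X_0 \amalg \floor{X_1} \amalg \cdots \amalg \floor{X_k}$ and $\supp(\ceil{s}) = X_0 \amalg \ceil{X_1} \amalg \cdots \amalg \ceil{X_k}$, where for each $i \in [k]$ with $\#X_i$ odd we have $\floor{X_i} = \{\min X_i - 1\} \cup X_i$ and $\ceil{X_i} = X_i \cup \{\max X_i + 1\}$ (and $\floor{X_i} = \ceil{X_i} = X_i$ when $\#X_i$ is even). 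The key point is that $\floor{X_i}$ and $\ceil{X_i}$ have the same cardinality, and $\floor{X_i}$ is ``$\ceil{X_i}$ shifted down by one'' precisely in the odd case: enumerating $\floor{X_i}$ and $\ceil{X_i}$ in decreasing order, the $\ell$-th element of $\ceil{X_i}$ is either equal to or exactly one more than the $\ell$-th element of $\floor{X_i}$. Since the blocks $X_0, \ldots, X_k$ occupy the same relative positions in both enumerations (they have matching block sizes, as $\#\floor{X_i} = \#\ceil{X_i}$), aligning the two decreasing enumerations of $\supp(\floor{s})$ and $\supp(\ceil{s})$ matches block against corresponding block, and within each block the desired inequality $d_\ell \leq d'_\ell$ holds termwise. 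This gives $\floor{s} \preceq_0 \ceil{s}$.

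The step requiring the most care is the bookkeeping that the decreasing enumerations of the two supports are ``block-aligned'' — that is, that the index ranges corresponding to $\floor{X_i}$ and to $\ceil{X_i}$ are literally the same set of positions $\ell$ in the two global enumerations. This follows from $\#\floor{X_i} = \#\ceil{X_i}$ for all $i$ (and $\#X_0 = \#X_0$), but it should be stated carefully, perhaps by an explicit induction on $i$ from $k$ down to $0$ tracking the running index offset; everything else is a routine comparison of finite sets of consecutive integers. No genuine obstacle is expected here; the content is essentially that $\floor{\cdot}$ and $\ceil{\cdot}$ differ only by shifting certain odd blocks down by one, which manifestly respects $\preceq_0$.
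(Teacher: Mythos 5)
Your argument is correct: the paper treats this lemma as immediate from the definitions and gives no proof, and your blockwise comparison (equal block sizes, with $\floor{X_i}$ obtained from $\ceil{X_i}$ by shifting the odd blocks down by one) is exactly the routine verification that is intended. The block-alignment bookkeeping you flag does go through, since $\min\floor{X_i} \geq \max X_{i-1}+1 > \max\floor{X_{i-1}}$ and similarly for the ceilings, so the decreasing enumerations list the blocks in the same positions.
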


Next, we extend the order on $\Spc$ to that of $\Sp$.

\begin{defi}[Order structure on $\Sp$]
The order structure $\preceq$ on $\Sp$ is defined by
\[
s\preceq s^{\prime}:\Iff
\text{$s = s^{\prime}$ or ($s\neq s^{\prime}$ \& $\ceil{s}\preceq_0 \floor{s^{\prime}}$).}
\]
\end{defi}
It is clear that $(\Sp,\preceq)$ is a poset extending $(\Spc, \preceq_0)$ by Lemma \ref{conservation}.
\begin{eg}
\[
\zero \prec \plus \prec \plus\plus \prec \zero\plus \prec \zero\plus\plus \prec \zero\zero\plus.
\]

For further examples, see the diagram below:
\[
\xymatrix@C=-45pt@R=5pt{
&&&&&\fbox{\plus\plus\plus\plus}\ar@{<-}[ld]\ar@{<-}[dd]\ar@{<-}[rd]&\\
&&&&\plus\plus\zero\plus\ar@{<-}[ld]&&\zero\plus\plus\plus\ar@{<-}@/^/[lddd]\\
&&&\fbox{\plus\plus\zero\plus\plus}\ar@{<-}[ld]\ar@{<-}[dd]\ar@{<-}[rd]&&\zero\plus\zero\plus\ar@{<-}[dd]&\\
&&\plus\plus\zero\zero\plus\ar@{<-}[ld]&&\zero\plus\zero\plus\plus\ar@{<-}[rd]&&\\
&\fbox{\plus\plus\zero\zero\plus\plus}\ar@{<-}[ld]\ar@{<-}[dd]\ar@{<-}[rd]&&\zero\plus\zero\zero\plus\ar@{<-}[dd]&&\fbox{\zero\plus\plus\plus\plus}\ar@{<-}[ld]\ar@{<-}[dd]\ar@{<-}[rd]&\\
&&\zero\plus\zero\zero\plus\plus\ar@{<-}[rd]&&\zero\plus\plus\zero\plus\ar@{<-}[ld]&&\zero\zero\plus\plus\plus\ar@{<-}@/^10pt/[lddd]\\
&&&\fbox{\zero\plus\plus\zero\plus\plus}\ar@{<-}[ld]\ar@{<-}[dd]\ar@{<-}[rd]&&\zero\zero\plus\zero\plus\ar@{<-}[dd]&\\
&&&&\zero\zero\plus\zero\plus\plus\ar@{<-}[rd]&&\\
&&&&&\fbox{\zero\zero\plus\plus\plus\plus}\ar@{<-}[ld]\ar@{<-}[d]\ar@{<-}[rd]&\\
&&&&&&
}
\]
Above, we write $s^{\prime} \leftarrow s$ if $s^{\prime} \prec s$, and the elements of $\Spc$ are in framed boxes.

\end{eg}

\begin{lemm}\label{generalhom}
For $s,s^{\prime} \in \Sp$, the following hold:
\begin{enumerate}
\renewcommand{\labelenumi}{(\theenumi)}
 \item $\floor{s} \preceq s \preceq \ceil{s}$.
 \item If $s \preceq s^{\prime}$, then $\floor{s} \preceq \floor{s^{\prime}}$, $\ceil{s} \preceq \ceil{s^{\prime}}$ and $\mathfrak{d}(s) \preceq \mathfrak{d} (s^{\prime})$.
\end{enumerate} 
\end{lemm}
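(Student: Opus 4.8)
\textbf{Proof plan for Lemma \ref{generalhom}.}

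The plan is to reduce everything to facts already established for $\preceq_0$ on $\Spc$, using the definition of $\preceq$ and Lemma \ref{firstthingsonSpc}. For part (1), I first note that $\lfloor s \rfloor, \lceil s \rceil \in \Spc$ by Lemma \ref{firstthingsonSpc}(1). If $s \in \Spc$ then $\lfloor s \rfloor = s = \lceil s \rceil$ by Lemma \ref{firstthingsonSpc}(3) and all three inequalities are equalities, so assume $s \notin \Spc$, hence $\lfloor s \rfloor \neq s \neq \lceil s \rceil$ (again Lemma \ref{firstthingsonSpc}(3)) and also $\lfloor s \rfloor \neq \lceil s \rceil$. To show $\lfloor s \rfloor \preceq s$ it suffices by definition of $\preceq$ to check $\lceil \lfloor s \rfloor \rceil \preceq_0 \lfloor s \rfloor$; but $\lfloor s \rfloor \in \Spc$, so $\lceil \lfloor s \rfloor \rceil = \lfloor s \rfloor = \lfloor \lfloor s \rfloor \rfloor$ by Lemma \ref{firstthingsonSpc}(3), and $\lfloor s \rfloor \preceq_0 \lfloor s \rfloor$ by reflexivity. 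Symmetrically, $s \preceq \lceil s \rceil$ reduces to $\lceil s \rceil \preceq_0 \lfloor \lceil s \rceil \rceil = \lceil s \rceil$, again by reflexivity. (One should double-check the idempotence statements $\lfloor \lfloor s \rfloor \rfloor = \lfloor s \rfloor$ and $\lceil \lceil s \rceil \rceil = \lceil s \rceil$: these are exactly Lemma \ref{firstthingsonSpc}(3) applied to $\lfloor s \rfloor, \lceil s \rceil \in \Spc$.)

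For part (2), assume $s \preceq s'$. If $s = s'$ everything is trivial, so assume $s \neq s'$, which by definition gives $\lceil s \rceil \preceq_0 \lfloor s' \rfloor$ in $\Spc$. I want $\lfloor s \rfloor \preceq \lfloor s' \rfloor$ and $\lceil s \rceil \preceq \lceil s' \rceil$ in $\Sp$. Since $\lfloor s \rfloor, \lceil s \rceil, \lfloor s' \rfloor, \lceil s' \rceil$ all lie in $\Spc$ and $(\Sp, \preceq)$ extends $(\Spc, \preceq_0)$, it is enough to produce the corresponding $\preceq_0$-inequalities. By Lemma \ref{conservation}, $\lfloor s \rfloor \preceq_0 \lceil s \rceil$ and $\lfloor s' \rfloor \preceq_0 \lceil s' \rceil$. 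Chaining: $\lfloor s \rfloor \preceq_0 \lceil s \rceil \preceq_0 \lfloor s' \rfloor \preceq_0 \lceil s' \rceil$, and transitivity of $\preceq_0$ then yields $\lfloor s \rfloor \preceq_0 \lfloor s' \rfloor$ (take the first three terms, drop the last) and $\lceil s \rceil \preceq_0 \lceil s' \rceil$ (take the last three, drop the first), hence the two claimed $\preceq$-inequalities. Finally, $\mathfrak{d}(s) \leq \mathfrak{d}(s')$ follows by Lemma \ref{firstthingsonSpc}(2) (which gives $\mathfrak{d}(s) = \mathfrak{d}(\lceil s \rceil)$ and $\mathfrak{d}(s') = \mathfrak{d}(\lfloor s' \rfloor)$) together with Remark \ref{homomorphism} applied to $\lceil s \rceil \preceq_0 \lfloor s' \rfloor$. (Note the statement writes ``$\mathfrak{d}(s) \preceq \mathfrak{d}(s')$'', which should read ``$\mathfrak{d}(s) \leq \mathfrak{d}(s')$'' since these are natural numbers; I would silently correct this.)

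Neither part presents a genuine obstacle — the whole content is bookkeeping with the definition of $\preceq$ (the ``$s \neq s'$'' case split) plus transitivity of $\preceq_0$ and the idempotence of $\lfloor \cdot \rfloor$, $\lceil \cdot \rceil$. The one point requiring minor care is making sure that in part (1) the non-trivial case $s \notin \Spc$ really does fall under the second clause of the definition of $\preceq$, i.e. that $\lfloor s \rfloor \neq s$ and $s \neq \lceil s \rceil$ there — this is where Lemma \ref{firstthingsonSpc}(3) is used in its contrapositive form. Everything else is a direct chain of already-proven monotonicity facts.
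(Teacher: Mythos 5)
Your proof is correct and takes essentially the same route as the paper's: part (1) reduces to the idempotence $\lceil\lfloor s\rfloor\rceil=\lfloor s\rfloor$ and $\lfloor\lceil s\rceil\rfloor=\lceil s\rceil$ from Lemma~\ref{firstthingsonSpc}, and part (2) chains $\lfloor s\rfloor\preceq_0\lceil s\rceil\preceq_0\lfloor s'\rfloor\preceq_0\lceil s'\rceil$ via Lemma~\ref{conservation} and the definition of $\preceq$, invoking Remark~\ref{homomorphism} for the $\mathfrak{d}$ claim. The extra case split on $s\in\Spc$ versus $s\notin\Spc$ in part (1) is harmless but not needed, since the ``$s=s'$'' clause of the definition of $\preceq$ already absorbs it; you also correctly flag that ``$\mathfrak{d}(s)\preceq\mathfrak{d}(s')$'' should read ``$\mathfrak{d}(s)\leq\mathfrak{d}(s')$''.
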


\begin{proof}
 $\floor{s} \preceq s$ follows from $\ceil{\floor{s}} = \floor{s}$ and $s \preceq \ceil{s}$ follows from $\ceil{s} = \floor{\ceil{s}}$.
 
 Now let $s \prec s^{\prime}$.
  $\floor{s} \preceq \floor{s^{\prime}}$ follows from $\floor{s} \preceq \ceil{s} \preceq \floor{s^{\prime}}$,
  and $\ceil{s} \preceq \ceil{s^{\prime}}$ can be shown similarly. 
  $\mathfrak{d}(s) \preceq \mathfrak{d} (s^{\prime})$ also follows from
  \[\mathfrak{d}(s) = \mathfrak{d} (\ceil{s}) \preceq \mathfrak{d} (\floor{s^{\prime}}) = \mathfrak{d}(s^{\prime}).\]
  Here, we have used Remark \ref{homomorphism} and Lemma \ref{conservation}.
\end{proof}

\begin{lemm}\label{well-founded}
\begin{enumerate}
 \renewcommand{\labelenumi}{(\theenumi)}
 \item\label{eventuallyincreasing} If $s_0\prec s_1 \prec s_2$ in $\Sp$, then $\floor{s_0} \prec \floor{s_2}$.
\item\label{well-foundedness} $(\Sp, \prec)$ is well-founded.
\item\label{degreediverge} If $s_0 \prec s_1 \prec s_2 \prec \cdots\ (s_i\in \Sp)$, then $\lim_{i\rightarrow \infty }\deg(s_i)=\infty$.
\item\label{numberofiterations} Let $s_1 \prec \cdots \prec s_l$ be a chain in $\Spc$, and $\#supp(s_l) = n \geq 2$. Then $l \leq n\deg(s_l)$.
\end{enumerate}
\end{lemm}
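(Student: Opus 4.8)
The statement to prove is Lemma~\ref{well-founded}, which has four parts; I will focus on sketching all four, since they build on each other.

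\textbf{Part (\ref{eventuallyincreasing}).} The plan is to unwind the definition of $\prec$ on $\Sp$. Since $s_0 \prec s_1$ with $s_0 \neq s_1$, we have $\ceil{s_0} \preceq_0 \floor{s_1}$; since $s_1 \prec s_2$, we have $\ceil{s_1} \preceq_0 \floor{s_2}$. By Lemma~\ref{conservation}, $\floor{s_1} \preceq_0 \ceil{s_1}$. Chaining these gives $\ceil{s_0} \preceq_0 \floor{s_2}$ in $\Spc$. Now $\floor{s_0} \preceq_0 \ceil{s_0}$ again by Lemma~\ref{conservation}, so $\floor{s_0} \preceq_0 \floor{s_2}$. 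To upgrade $\preceq_0$ to a strict inequality $\prec$, I need to rule out $\floor{s_0} = \floor{s_2}$; if that equality held, then collapsing the chain would force $\ceil{s_0} = \floor{s_1} = \ceil{s_1} = \floor{s_2}$, hence $s_1 \in \Spc$ with $\floor{s_1} = \ceil{s_1} = s_1$ by Lemma~\ref{firstthingsonSpc}(3), and also $\ceil{s_0} = s_1 = \floor{s_2}$, which via Lemma~\ref{generalhom}(1) squeezes $s_0 \preceq s_1 \preceq s_2$ with $s_0 \preceq \ceil{s_0} = s_1$ and $s_1 = \floor{s_2} \preceq s_2$ — I then need to check this contradicts $s_0 \prec s_1 \prec s_2$ being strict. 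Actually $s_0 \prec s_1$ strict together with $\ceil{s_0}=s_1$ is fine, but combined with $s_1 = \floor{s_2}$ and $s_1 \prec s_2$ we'd get $\floor{s_2} \prec s_2$, which is consistent too; so the cleaner route is: if $\floor{s_0}=\floor{s_2}$ then since both $s_0 \prec s_1$ and $s_1 \prec s_2$ are strict and $\prec$ is a partial order (already established in the text), transitivity gives $s_0 \prec s_2$, yet $\floor{}$ being order-preserving and $s_0, s_2$ having the same floor would need a separate argument; I expect the intended proof simply observes that a strict step $s \prec s'$ with $s \notin \Spc$ or $s' \notin \Spc$ still forces $\ceil{s} \preceq_0 \floor{s'}$, and two consecutive strict steps cannot both have $\ceil{}=\floor{}$ on the middle term unless the middle term is in $\Spc$, in which case one of the two steps degenerates. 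I will present it by case analysis on whether $s_1 \in \Spc$.

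\textbf{Parts (\ref{well-foundedness}) and (\ref{degreediverge}).} These follow from (\ref{eventuallyincreasing}) plus an observation about $\Spc$. First I would show $(\Spc, \preceq_0)$ is well-founded: an element $s \in \Spc$ is determined by $\supp(s) = \{d_1 > \cdots > d_m\}$, and $\preceq_0$ is coordinatewise domination after padding; a strictly decreasing sequence would give infinite descent in $\omega^{<\omega}$ under this order, which is impossible (e.g.\ by induction on $d_1$ of the first element, or by König-type/Dickson-type reasoning). Then for (\ref{degreediverge}): given an infinite strict chain $s_0 \prec s_1 \prec \cdots$ in $\Sp$, apply (\ref{eventuallyincreasing}) to consecutive triples to get $\floor{s_0} \prec \floor{s_2} \prec \floor{s_4} \prec \cdots$, an infinite strict chain in $\Spc$. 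Since $\preceq_0$ on $\Spc$ dominates $\deg$ (the largest support element) and support sizes are nondecreasing along the chain, and a strictly increasing chain in $\omega^{<\omega}$ under coordinatewise domination must have unbounded maximum coordinate (otherwise infinitely many terms lie in a finite set), $\deg(\floor{s_{2i}}) \to \infty$, hence $\deg(s_i) \to \infty$ since $\deg(s) \geq \deg(\floor{s}) - 1 \geq \deg(\floor s) - 1$... more precisely $\deg(s) = \deg(\ceil s) \geq \deg(\floor s)$ via Lemma~\ref{generalhom} isn't quite it; I should just note $\floor s$ and $s$ have degrees differing by at most $1$ (the floor operation only shifts the bottom of each odd block down by one, never touching the top block's max unless that block has odd length, in which case it adds one below — so $\deg$ is unchanged by $\floor{}$ actually, since $\floor{}$ only adds indices below existing ones). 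So $\deg(s_i) = \deg(\floor{s_i})$, even cleaner. For (\ref{well-foundedness}): an infinite strict descending chain $s_0 \succ s_1 \succ \cdots$ would, by the same triple trick, give an infinite strict descending chain in $\Spc$, contradicting well-foundedness of $(\Spc, \preceq_0)$.

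\textbf{Part (\ref{numberofiterations}).} Here I want a quantitative bound: a strict chain $s_1 \prec \cdots \prec s_l$ in $\Spc$ with $\#\supp(s_l) = n \geq 2$ has length $l \leq n \deg(s_l)$. The idea is that each $s_j \in \Spc$ is coded by its support $\{d_1^{(j)} > \cdots > d_{m_j}^{(j)}\}$ with $m_j \leq n$ and every $d_i^{(j)} \leq \deg(s_l) =: D$. Along a strict chain in $\preceq_0$, either the support size strictly increases (at most $n$ times, well, $m_l \le n$ so at most $n-1$ such increases among $l-1$ steps) or, when the size stays fixed at some $m$, the tuple $(d_1^{(j)}, \ldots, d_m^{(j)})$ strictly increases coordinatewise-dominantly, so the sum $\sum_i d_i^{(j)}$ strictly increases, and this sum is bounded by $mD \leq nD$... but I want a bound on the number of steps, not just the range of the sum. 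The sum increases by at least $1$ each step within a fixed-size block, and ranges over at most $nD$ values, giving at most $nD$ steps within each fixed-size regime; summing over the at most $n$ possible support sizes would give $n \cdot nD = n^2 D$, which is worse than claimed. To get exactly $n D$: observe that the quantity $\Phi(s_j) := \sum_{i=1}^{m_j} d_i^{(j)}$ (sum of support elements) is strictly increasing along the \emph{entire} chain — when the support size grows, the new element is positive (and actually the remaining ones can only grow or stay), so $\Phi$ still strictly increases; wait, need to confirm: if $s_j \prec_0 s_{j+1}$ with $m_j < m_{j+1}$, the first $m_j$ coordinates of $s_{j+1}$ dominate those of $s_j$, and there are extra positive coordinates, so $\Phi(s_{j+1}) > \Phi(s_j)$. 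Hence $\Phi$ is a strictly increasing integer-valued function along the chain, with $\Phi(s_1) \geq 0$ and $\Phi(s_l) \leq \sum_{i=1}^{n} d_i^{(l)} \leq n D$ (the $n$ largest possible values are all $\leq D$). Therefore $l - 1 \leq \Phi(s_l) - \Phi(s_1) \leq nD$, so $l \leq nD + 1$. To land exactly at $l \leq n D = n \deg(s_l)$ I may need to sharpen: $\Phi(s_l) \leq D + (D-1) + \cdots$ isn't available since support elements of $\Spc$ come in blocks, but at minimum $\Phi(s_1) \geq 1 + 2 = 3$ if... hmm, actually $s_1$ could be very small. The honest bound I can cleanly prove is $l \leq n\deg(s_l) + 1$ or so; I will aim for $l \le n \deg(s_l)$ by being slightly more careful — e.g.\ noting $\deg(s_l) \ge n-1$ when $\#\supp(s_l) = n$ is false in general for $\Spc$ (support elements of $\Spc$ are not consecutive), but $\deg(s_l) \geq n - 1$ does hold since $n$ distinct elements of $\omega$ have max $\geq n-1$; then $\Phi(s_1) \geq$ something and a small adjustment closes the gap. \textbf{The main obstacle} is exactly this: getting the clean constant $n$ rather than $n^2$ or $n+1$, which forces me to use the single monovariant $\Phi(s) = \sum \supp(s)$ across the whole chain (not a per-block argument) and to pin down $\Phi(s_l) \leq n \deg(s_l)$ and $\Phi(s_1) \geq 1$ carefully. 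Everything else is routine order-theoretic bookkeeping.
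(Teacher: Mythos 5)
Your overall strategy — the chain $\floor{s_0}\preceq_0\ceil{s_0}\preceq_0\floor{s_1}\preceq_0\ceil{s_1}\preceq_0\floor{s_2}$ for part (1), reduction to $\Spc$ for (2) and (3), and the monovariant $\Phi(s)=\sum_{d\in\supp(s)}d$ for part (4) — is exactly the paper's, but you leave two genuine gaps where you yourself flag uncertainty.

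For part (1), you correctly collapse the chain and deduce $s_1\in\Spc$, but then wander. The missing observation is that the collapse forces \emph{all} five consecutive inequalities to be equalities, so in particular $\floor{s_0}=\ceil{s_0}$ as well, which by Lemma \ref{firstthingsonSpc}(3) gives $s_0\in\Spc$ and $s_0=\ceil{s_0}$. Combined with $s_1=\floor{s_1}$ and $\ceil{s_0}=\floor{s_1}$, this yields $s_0=s_1$, directly contradicting $s_0\prec s_1$. There is no need for a case analysis on whether $s_1\in\Spc$, nor for the transitivity detour you sketch: the point is simply that both endpoints of the middle equalities land in $\Spc$ and coincide.

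For part (4), you identify the right monovariant but explicitly concede you only see $l\leq n\deg(s_l)+1$. The missing piece is the \emph{strict} upper bound $\Phi(s_l)<n\deg(s_l)$. Write $\supp(s_l)=\{d_1>\cdots>d_n\}$ with $d_1=\deg(s_l)$. Since $n\geq 2$, there is at least one index $i\geq 2$, and every such $d_i$ satisfies $d_i<d_1$; hence $\Phi(s_l)=d_1+\sum_{i\geq 2}d_i < d_1+(n-1)d_1 = n\deg(s_l)$. Combined with $\Phi(s_1)\geq 0$ and strict monotonicity of $\Phi$ along the chain, the $l$ values $\Phi(s_1)<\cdots<\Phi(s_l)$ are $l$ distinct integers in $\{0,1,\ldots,n\deg(s_l)-1\}$, a set of size $n\deg(s_l)$, so $l\leq n\deg(s_l)$. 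Your worry about needing $\Phi(s_1)\geq 1$ is unfounded — $\Phi(s_1)\geq 0$ suffices once the upper end is strict. The hypothesis $n\geq 2$ is used precisely to furnish the strict inequality.

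Your treatment of parts (2) and (3) is fine and matches the paper: an infinite strict descending (resp.\ ascending) chain in $\Sp$ yields one in $\Spc$ via part (1) applied to consecutive triples, and $(\Spc,\preceq_0)$ is well-founded while a strict ascending chain in $\Spc$ must have $\max\supp(\cdot)$ unbounded; your remark that $\floor{\cdot}$ preserves degree (it only adds indices below existing blocks) correctly transfers the degree divergence from $\Spc$ back to $\Sp$.
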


\begin{proof}
We first consider (\ref{eventuallyincreasing}).
Indeed,
$\floor{s_0}\preceq \ceil{s_0} \preceq \floor{s_1} \preceq \ceil{s_1} \preceq \floor{s_2}$, hence, if $\floor{s_0}=\floor{s_2}$, then  $s_0=s_1\in \Spc$.
(\ref{well-foundedness}) follows immediately from (\ref{eventuallyincreasing}) and the fact that $(\Spc, \preceq_0)$ is well-founded.
(\ref{degreediverge}) follows from (\ref{eventuallyincreasing}) and the definition of $\preceq_0$.
We prove (\ref{numberofiterations}).
For each $s \in \Spc$, let 
$\mu(s) := \sum_{d \in \supp(s)} d$.
Then we have $0 \leq \mu(s_1) < \cdots < \mu(s_l) < n \deg(s_l)$ by the definition of $\preceq_0$ and the assumption $n \geq 2$.
\end{proof}

The remaining lemmas in this section are immediate consequences of the definitions:
\begin{lemm}\label{increment}
Let $s\in \Spc$ and $\supp(s)=\{d_1>\cdots>d_m\}$.
Assume that $s^{\prime}\in \Sp$ satisfies
\[
\begin{cases}
\supp(s^{\prime})\subsetneq \supp(s), \\
\supp(s) \setminus \supp(s^{\prime})\subseteq \{d_{2i}\mid 1\leq 2i \leq m\}.
\end{cases}
\]
Then $s^{\prime}\notin \Spc$ and $\floor{s^{\prime}}=s$.
In particular, $s \prec s^{\prime}$ and $\mathfrak{d}(s^{\prime})=\mathfrak{d}(s)$.
\end{lemm}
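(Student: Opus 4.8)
\textbf{Proof plan for Lemma \ref{increment}.}
The plan is to verify the three assertions in the order they are stated, reading everything off the interval decomposition of $\supp(s)$ from Proposition \ref{rmk:mucalculation}. Write $\supp(s) = X_0 \amalg X_1 \amalg \cdots \amalg X_k$ for the canonical decomposition into maximal intervals with gaps of size $\geq 2$. Since $s \in \Spc$, each $l_i = \#X_i$ (for $i \in [k]$) is even; only blocks of even length occur, and this is the fact I will use repeatedly. The hypothesis says that $\supp(s')$ is obtained from $\supp(s)$ by deleting a nonempty set of positions, all of which sit in even-indexed slots $d_{2i}$ when $\supp(s) = \{d_1 > \cdots > d_m\}$ is listed in decreasing order.

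First I would show $s' \notin \Spc$. The cleanest route is via $\mathfrak{d}$: I claim $\mathfrak{d}(s') = \mathfrak{d}(s) = m$. The inequality $\mathfrak{d}(s') \leq \mathfrak{d}(s)$ is not quite automatic since $\supp(s') \subseteq \supp(s)$ does not in general imply a $\preceq$-relation, so instead I would argue directly: any sign sequence $t$ obtained from $s'$ by changing entries to $-$ is also obtainable from $s$ by changing entries to $-$ or $0$ (namely, change the deleted positions — which are $0$ in $s'$ but were $+$ in $s$ — to $0$ as well), hence $\SC(t) \leq \mathfrak{d}(s)$; thus $\mathfrak{d}(s') \leq \mathfrak{d}(s) = m$. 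For the reverse, since at least one position was deleted, $\# \supp(s') \leq m - 1 < \mathfrak{d}(s')$ would already give $s' \notin \Spc$ — but I still need $\mathfrak{d}(s') = m$ to conclude $\floor{s'} = s$, so I would instead directly exhibit a $t$ realizing $\mathfrak{d}(s') \geq m$, or equivalently just establish $\floor{s'} = s$ and invoke Lemma \ref{firstthingsonSpc}(2). So the heart of the matter is $\floor{s'} = s$, and $s' \notin \Spc$ then follows because $s' \neq s$ (a position was deleted) while $\floor{s} = s$ for $s \in \Spc$, contradicting $\floor{s'} = s$ unless $s' = s$.

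The key computation, then, is $\floor{s'} = s$. Here I would examine block by block. Deleting an element $d_{2i}$ (an even-indexed position from the top) from within a block of even length either (a) splits that block into two pieces, or (b) removes an endpoint. In case (a), the deleted position is at even distance from the top of the original block, so the upper piece has even length (hence is unaffected by $\floor{\cdot}$, which only prepends a position to odd-length blocks) while the lower piece has odd length, so $\floor{\cdot}$ prepends exactly the deleted position $d_{2i} - ?$ — I need to check carefully that the position $\floor{\cdot}$ re-inserts ($\min$ of the odd block minus $1$) is precisely the just-deleted $d_{2i}$ and that no gap-of-size-$\geq 2$ condition is violated, which forces the deleted positions to be nonadjacent and keeps the reconstruction exact. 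In case (b), removing an endpoint of an even block leaves an odd block whose $\floor{\cdot}$ restores that very endpoint. Summing over all blocks, $\floor{s'}$ restores exactly $\supp(s)$, giving $\floor{s'} = s$. Finally, $s \prec s'$ is then immediate: $s \neq s'$ and $\ceil{s} = s = \floor{s'}$, so $\ceil{s} \preceq_0 \floor{s'}$; and $\mathfrak{d}(s') = \mathfrak{d}(\floor{s'}) = \mathfrak{d}(s)$ by Lemma \ref{firstthingsonSpc}(2).

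\textbf{Main obstacle.} The only genuinely fiddly point is the block-by-block bookkeeping in establishing $\floor{s'} = s$: one must be sure that deleting precisely the even-indexed-from-the-top positions always produces odd-length sub-blocks whose $\floor{\cdot}$-completion re-inserts exactly the deleted indices, without two deletions interacting (they can't, since deleting $d_{2i}$ and $d_{2i\pm 1}$ is forbidden) and without creating or destroying a gap in a way that changes the canonical decomposition. Everything else is bookkeeping with the definitions and Lemma \ref{firstthingsonSpc}.
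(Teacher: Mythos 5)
Your overall plan is sound and, since the paper gives no explicit proof (it files this lemma under ``immediate consequences of the definitions''), it is essentially the natural argument: establish $\floor{s'}=s$ by examining blocks, then read off $s'\notin\Spc$ (from $\floor{s'}=s\neq s'$), $s\prec s'$ (from $\ceil{s}=s=\floor{s'}$) and $\mathfrak{d}(s')=\mathfrak{d}(s)$ (from Lemma \ref{firstthingsonSpc}). The preliminary observation that $\mathfrak{d}(s')\leq\mathfrak{d}(s)$ because any $t$ with $t_i\in\{-,0,s'_i\}$ also has $t_i\in\{-,0,s_i\}$ is correct, though as you note it becomes redundant once $\floor{s'}=s$ is in hand.

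The one genuine problem is in the block-by-block bookkeeping that you yourself flag as the crux. After deleting $d_{2i}$ from a block $X_j$ with $j\geq 1$, the sub-piece of $X_j$ lying \emph{above} $d_{2i}$ in value (i.e.\ with minimum $d_{2i}+1$) has \emph{odd} length, while the sub-piece lying \emph{below} (with minimum $\min X_j$) has \emph{even} length. Under the natural reading where ``top'' means largest value (which is how you use it earlier, calling $d_{2i}$ ``an even-indexed position from the top'' of the list $d_1>\cdots>d_m$), your text states the parities the other way around. Taken literally this is fatal: $\floor{\cdot}$ would then prepend $\min X_j - 1\notin X_j$ rather than the deleted $d_{2i}$. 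With the parities corrected, $\floor{\cdot}$ applied to the odd upper sub-piece does prepend exactly $d_{2i}$, and (since $l_j$ is even) the starting local index of $X_j$ is always odd so the $d_{2i}$'s inside $X_j$ are exactly the positions at odd index-distance from the block's maximum; this also shows multiple deletions inside one block produce alternating odd pieces, all handled the same way.

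You also pass over the block $X_0$ entirely, which does need a separate line: $\floor{\cdot}$ never prepends to $X_0'$ regardless of parity, and when $l_0$ is odd the even-indexed $d$'s in $X_0$ are the odd values $1,3,\dots$ so $X_0$ is not ``a block of even length'' and neither case (a) nor (b) of your dichotomy applies verbatim. The conclusion still holds: whenever $0\in\supp(s')$ the lowest surviving piece is $X_0'$ and $\floor{\cdot}$ correctly ignores it, while every deleted $d_{2i}$ (including those inside $X_0$) is re-inserted by $\floor{\cdot}$ applied to the odd-length piece of $\supp(s')$ whose minimum is $d_{2i}+1$. With the parity flip and the $X_0$ remark added, your proof is complete.
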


\begin{defi}
Let $s\in \Spc$.
Put $d=\min\{i\mid s_i=0\}$.
Define $s^+$ by $\supp(s^+)=\supp(s)\cup\{d\}$.
\end{defi}

\begin{lemm}
Let $s\in \Spc$. 
Then $s^+\in \Spc$, $s \prec s^+$, and $\mathfrak{d}(s^+)=\mathfrak{d}(s)+1$.
\end{lemm}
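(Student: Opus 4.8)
The statement has three parts to verify: that $s^{+}\in\Spc$, that $s\prec s^{+}$, and that $\mathfrak{d}(s^{+})=\mathfrak{d}(s)+1$. All three should follow by tracking how the interval decomposition of $\supp(s)$ from Proposition~\ref{rmk:mucalculation} changes when we adjoin the single index $d=\min\{i\mid s_i=0\}$. The plan is to identify $d$ explicitly in terms of that decomposition and then read off each claim.

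First I would locate $d$. Write $\supp(s)=X_0\amalg\cdots\amalg X_k$ as in Proposition~\ref{rmk:mucalculation}. Since $s\in\Spc$, each $X_i$ with $i\in[k]$ has even cardinality, and $X_0$ is either empty or an initial segment $[0,l_0-1]$. If $X_0\neq\emptyset$, then $d=l_0$, the first gap after the initial block; if $X_0=\emptyset$, then $d=0$. In the first case adjoining $d$ merges $X_0$ with (an extension toward) $X_1$ only if $\min X_1 = l_0+1$; in general adjoining $d$ either extends $X_0$ by one element on the right (if $d=\min X_0'$ is isolated from $X_1$, i.e. $\min X_1\geq d+2$) or fuses $X_0$ and $X_1$ into one interval (if $\min X_1=d+1$). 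Either way I would check directly that $X_0\cup\{d\}$ together with the unchanged $X_1,\ldots,X_k$ (or $X_2,\ldots,X_k$ in the fusion case) is again a valid decomposition of the type in Proposition~\ref{rmk:mucalculation}, and crucially that every block of index $\geq 1$ still has even size: the new $X_0$-block (which may now absorb the old $X_1$) has size $l_0+1+l_1$ in the fusion case — but this is the $0$-indexed block, so parity is irrelevant — or the blocks $X_1,\dots,X_k$ are untouched. Hence each relevant $l_i$ remains even, so $s^{+}\in\Spc$ by the characterization in the definition of $\Spc$.

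For $\mathfrak{d}(s^{+})=\mathfrak{d}(s)+1$: apply the formula $\mathfrak{d}(\cdot)=l_0+2\lceil l_1/2\rceil+\cdots+2\lceil l_k/2\rceil$ of Proposition~\ref{rmk:mucalculation}. In the non-fusion case only $l_0$ changes, from $l_0$ to $l_0+1$, so $\mathfrak{d}$ increases by exactly $1$. In the fusion case the new $0$-block has size $l_0+1+l_1$ and blocks $X_1$ disappears, so the contribution changes from $l_0 + 2\lceil l_1/2\rceil = l_0+l_1$ (using $l_1$ even) to $l_0+1+l_1$, again an increase of exactly $1$. The case $X_0=\emptyset$, $d=0$ is the fusion-type case with $l_0=0$ and works the same way. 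For $s\prec s^{+}$: since $s\in\Spc$ we have $\ceil{s}=s$, so I must show $s=\ceil{s}\preceq_0\floor{s^{+}}$, and note $s\neq s^{+}$ since $\supp$ strictly grows. But $s^{+}\in\Spc$ gives $\floor{s^{+}}=s^{+}$ by Lemma~\ref{firstthingsonSpc}(3), so it suffices to check $s\preceq_0 s^{+}$, which is immediate from $\supp(s)\subseteq\supp(s^{+})$ and the fact that adjoining a \emph{smaller} index to a support only shifts the enumeration-from-the-top weakly upward — concretely, if $\supp(s)=\{d_1>\cdots>d_m\}$ and $\supp(s^{+})=\{d_1>\cdots>d_m>d\}$ with $d<d_m$, then the first $m$ entries agree and the list is longer, so $s\preceq_0 s^{+}$ by definition of $\preceq_0$.

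The only mild subtlety — and the step I would be most careful about — is the bookkeeping of the interval decomposition when adjoining $d$ causes the initial block $X_0$ to fuse with $X_1$ (or, when $X_0=\emptyset$, causes $X_1$ to become the new initial block after $0$ is adjoined and $\min X_1=1$). One must make sure the gap condition $\min X_i-\max X_{i-1}\geq 2$ is preserved for all surviving blocks and that no parity is broken. Since $d$ is by definition the \emph{first} zero entry, nothing to the right of $X_1$ is touched, so this is a finite and local check; there is no real obstacle, just a small case split on whether $\min X_1=d+1$ or $\min X_1\geq d+2$ (and the degenerate possibility $k=0$, where $s^{+}$ simply extends the single block and all claims are trivial).
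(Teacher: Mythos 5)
Your overall approach is the right one, and it is in fact the only sensible reading: the paper offers no proof for this lemma, stating merely that it is an ``immediate consequence of the definitions,'' and your case analysis on how adjoining $d$ affects the interval decomposition of Proposition~\ref{rmk:mucalculation} is exactly the kind of unwinding that phrase invites. The case split (fusion versus non-fusion with $X_1$, degenerate $k=0$, $X_0$ empty or not) is correctly handled, the parity of the blocks $X_1,\dots,X_k$ is correctly seen to be preserved, and the computation $\mathfrak{d}(s^{+})=\mathfrak{d}(s)+1$ via the formula of Proposition~\ref{rmk:mucalculation} is right in every branch.

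There is, however, a concrete error in the argument for $s\prec s^{+}$. You assert that ``$\supp(s^{+})=\{d_1>\cdots>d_m>d\}$ with $d<d_m$,'' so that the first $m$ entries of the top-down enumeration agree. This is false whenever $0\in\supp(s)$, i.e.\ whenever $X_0\neq\emptyset$: in that case $d_m=\min\supp(s)=0$ while $d=l_0\geq 1$, so $d>d_m$ and the new element sits strictly in the middle of the enumeration (just below $X_1$, if $k\geq 1$). Your claim only holds in the complementary case $s_0=0$. The conclusion $s\preceq_0 s^{+}$ is nonetheless correct, and the fix is small: for \emph{any} insertion of a new element $d$ into a finite set $\supp(s)=\{d_1>\cdots>d_m\}$, the resulting top-down enumeration $\{d_1'>\cdots>d_{m+1}'\}$ satisfies $d_i'\geq d_i$ for all $i\in[m]$ (entries above $d$ are unchanged; at the position where $d$ is inserted and below, each $d_i'$ equals $d$ or a previously higher $d_{i-1}$), together with $m\leq m+1$. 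That is the precise content of your ``shifts the enumeration weakly upward'' heuristic; the parenthetical ``$d<d_m$'' should be dropped.
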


\begin{lemm}\label{survivingcoefficientsforceil}
Let $s\in \Sp$ and 
$\supp(\ceil{s}^+)=\{d_1>\cdots >d_m\}$.
Then $d_{2i}\in \supp(s)$ for $2i\in [m]$.
\end{lemm}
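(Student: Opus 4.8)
The plan is to unwind the two operations $s \mapsto \lceil s \rceil$ and $s \mapsto (\,\cdot\,)^{+}$ applied successively and track, in each of the relevant interval blocks, exactly which positions survive as odd-indexed entries in the final descending enumeration. Recall from Proposition \ref{rmk:mucalculation} that $\supp(s)$ decomposes as $X_0 \amalg X_1 \amalg \cdots \amalg X_k$ into maximal runs separated by gaps of length $\geq 2$ (with $X_0$ the possibly-empty run containing $0$), and that $\lceil \cdot \rceil$ pads each $X_i$ with $i \geq 1$ of odd length by one extra position at its top end, so that $\supp(\lceil s \rceil) = X_0 \amalg \lceil X_1 \rceil \amalg \cdots \amalg \lceil X_k \rceil$ has every block of even length.

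First I would observe that Lemma \ref{survivingcoefficients}(2) already handles the case where the extra index introduced by $(\,\cdot\,)^{+}$ does not interfere: if $\ceil{s}^{+}$ were simply $\lceil s' \rceil$ for some $s' \in \Sp$ obtained from $s$ by the $\ceil{\cdot}$-padding on a shifted block structure, then the claim is immediate. The content is therefore in analyzing how appending the new index $d = \min\{i \mid (\lceil s \rceil)_i = 0\}$ changes the block decomposition, and what it does to the parity of block lengths counted from the top. I would split into cases according to where $d$ lands relative to the blocks of $\lceil s \rceil$: either $d$ sits in the gap immediately above some block $\lceil X_j \rceil$ and is at distance $\geq 2$ from the next block up (so it forms a new singleton block, which $\lceil \cdot \rceil$ of $\ceil{s}^{+}$ — wait, $\ceil{s}^+$ is already $\ceil{\cdot}$-closed except at this new index), or $d$ is adjacent to a block, merging with it. Since $\lceil s \rceil \in \Spc$ has all blocks even, $d = \min\{i : (\lceil s\rceil)_i = 0\}$ is the first $0$; because $X_0$ is the block containing $0$ (when $0 \in \supp(s)$), $d$ is typically the position just past $X_0$ (or $\lceil X_0\rceil = X_0$), hence $d$ either extends the bottom block by one or, if that gap has width $\geq 2$, creates a new bottom singleton. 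In both sub-cases the net effect on $\supp(\ceil{s}^{+})$ is to prepend or lengthen-by-one the bottommost block, shifting the descending index $d_1 > \cdots > d_m$ enumeration by exactly one slot below some fixed threshold. Concretely, I would show $\supp(\ceil{s}^{+})$ agrees with $\supp(\lceil s \rceil)$ on all indices $> d$, so the positions $d_1, d_3, d_5, \ldots$ among indices exceeding $d$ are unchanged from the odd-indexed survivors guaranteed by Lemma \ref{survivingcoefficients}(2) — but with the indices shifted down by one because one new element $d$ has been inserted at the bottom. That parity shift is precisely why the conclusion flips from "odd-indexed entries of $\lceil s \rceil$ lie in $\supp(s)$" to "even-indexed entries of $\ceil{s}^{+}$ lie in $\supp(s)$": inserting one element at the bottom turns every former odd rank into an even rank.

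The main obstacle I anticipate is bookkeeping the boundary/edge cases cleanly: when $0 \notin \supp(s)$ so $X_0 = \emptyset$; when $d$ happens to merge with $X_1$ rather than forming its own block, changing $X_1$'s parity and hence whether $\lceil \cdot \rceil$-padding was applied there (but note $\ceil{s}^+$ does not re-apply $\ceil{\cdot}$, it just adjoins $d$ to the already-even blocks of $\lceil s\rceil$); and the degenerate cases $\supp(s) = \emptyset$ or $m$ small. To keep the argument honest I would first prove the key structural claim that $d \leq \min(\supp(\lceil s\rceil) \setminus X_0) $ strictly, equivalently that $d$ lies at or below every block of $\lceil s \rceil$ except $X_0$, so that appending $d$ only ever affects the bottom of the support; then a single parity-counting argument on the tail $\{d_i : d_i > d\}$, combined with Lemma \ref{survivingcoefficients}(2) applied to $\lceil s \rceil$ (whose odd-ranked entries — within blocks — land in $\supp(s)$ since padding is always at the \emph{top} of an odd block and the top of an odd block is the \emph{even}-from-bottom-of-block position), delivers the result. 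I expect the whole proof to be three or four lines once the block picture is drawn, matching the paper's stated expectation that "the remaining lemmas in this section are immediate consequences of the definitions."
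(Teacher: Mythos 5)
Your plan has the right skeleton---split the enumeration into positions above the newly adjoined index $d$, the index $d$ itself, and positions below $d$, then use that $d$ sits between $X_0$ and the remaining blocks and that $X_0\subseteq\supp(s)$---but the two parity claims that carry the argument are both factually wrong, and they are the crux. First, Lemma \ref{survivingcoefficients}(2) says that the \emph{even}-indexed entries $d_{2i}$ of $\supp(\ceil{s})$ lie in $\supp(s)$; you cite it as asserting odd-indexed survivors, which is the statement of part (1) about $\floor{s}$. Second, in the descending enumeration $d_1>\cdots>d_m$ the index $1$ is the \emph{largest} element, so adjoining $d$ near the bottom of the support does \emph{not} shift the ranks of anything above $d$; only the ranks of the $X_0$-elements lying below $d$ increase by one. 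There is no ``parity flip'' between $\ceil{s}$ and $\ceil{s}^+$ for positions above $d$. As written, your two errors happen to cancel and produce the correct target parity, but the argument itself does not go through, and it also never pins down the rank of $d$, which you must know is odd for the lemma to be safe.

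The correct tally is shorter than what you sketch. Write $\supp(\ceil{s}^+)=\ceil{X_1}\amalg\cdots\amalg\ceil{X_k}\amalg\{d\}\amalg X_0$, listed top to bottom, where $d=l_0=\#X_0$ and each $\#\ceil{X_j}$ is even. Above $d$ the descending enumerations of $\supp(\ceil{s}^+)$ and $\supp(\ceil{s})$ coincide rank for rank, so Lemma \ref{survivingcoefficients}(2) applies with no shift at all and gives $d_{2i}\in\supp(s)$ whenever $2i\leq N:=\sum_{j\geq 1}\#\ceil{X_j}$. Since $N$ is even, $d$ occupies rank $N+1$, which is odd, so it is never one of the $d_{2i}$. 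Finally, every rank $\geq N+2$ is occupied by an element of $X_0\subseteq\supp(s)$, which is in $\supp(s)$ regardless of parity. The paper states this lemma without proof (``immediate consequences of the definitions''), and this three-step rank count is presumably what is meant; your proposal needs both corrections above before it reduces to it.
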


\section{An algorithm along $\preceq$}\label{An algorithm}
\quad \cite{Thielcke} gave a linear-programming-free heuristics for finding a minimal polynomial.
However, as far as we see, there has not been a complete analysis of it, and natural trials actually would be complicated.

Thus we take a different approach. In this section, we describe another linear-programming-free algorithm for constructing the minimal polynomial of given $S=\{(a_{i},b_{i})\}_{i=1}^{n}$ ``along'' the order $\preceq$.

Although the resulting Algorithm \ref{findtheminimalpolynomial} itself is of performance equivalent to Algorithm \ref{minimalpolynomialbylinearprogramming}
in the sense that they both run in
$\expt(n,A)\poly(B)$-time, we think it is worth mentioning since it reveals some new facts on the relation between minimal polynomials (and interpolating CMPs) and may contribute to future solutions to some open problems in section \ref{Open Problems}.

We begin with a subroutine ``incrementing'' interpolating CMPs. 

\begin{figure}[t]
  \begin{algorithm}[H]
    \caption{Increment}
    \label{algorithmincrement}
    \begin{algorithmic}[1]
      \Require $f \in \Qnn[X]$, $m \geq 1$, and $a_{1}< \cdots < a_{m} \in \mathbb{Q}_{>0}$ with $\mathfrak{d}(f) = m, m+1$.
      \Ensure $g \in \Qnn[X]$ (see Corollary \ref{inductivestep}).
      \If{$\mathfrak{d}(f) = m$}
      \State $s \leftarrow \ceil{\Sign(f)}^{+}$.
      \Else \Comment{The case when $\mathfrak{d}(f) = m+1$.}
      \State $s \leftarrow \ceil{\Sign(f)}$.
      \EndIf
   \State $\{d_1 > \cdots > d_{m+1}\} \leftarrow \supp(s)$ \Comment{Note that $\# \supp(s)=m+1$ in any case.}
   \State\label{findshift} Find one $[c_{1},\ldots, c_{m+1}]^{t}$ such that
      \[\begin{bmatrix}
 a_{1}^{d_{1}} & \cdots & a_{1}^{d_{m+1}}\\
   \vdots & & \vdots \\
  a_{m}^{d_{1}} & \cdots & a_{m}^{d_{m+1}}\\
\end{bmatrix}
\begin{bmatrix}
c_{1}\\
\vdots \\
c_{m+1}
\end{bmatrix}
=\mathbf{0}, c_{1} > 0.\]
      \State $h \leftarrow c_{1}X^{d_{1}} + \cdots + c_{m+1}X^{d_{m+1}}$\label{theshift} 
      \State Let $f=e_{1}X^{d_{1}} + \cdots + e_{m+1}X^{d_{m+1}}$.
      \State \Comment{Note that $\supp(\Sign(f)) \subseteq \supp(s)$.}
      \State $t \leftarrow \min_{i=1}^{\floor{(m+1)/2}}\left|{e_{2i}/{c_{2i}}}\right|$
      
      \State $g \leftarrow f+th$
      \State {\bf return } $g$
    \end{algorithmic}
  \end{algorithm}
\end{figure}

\begin{prop}\label{termination}
For Algorithm  \ref{algorithmincrement}, the system in line \ref{findshift} has a solution, and the algorithm outputs a CMP $g$.
\end{prop}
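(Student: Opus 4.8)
The plan is to verify the two claims separately. First, that the linear system in line \ref{findshift} has a solution with $c_1 > 0$; second, that the output $g = f + th$ lies in $\Qnn[X]$. For the first claim, I would observe that the matrix in line \ref{findshift} has size $m \times (m+1)$, so its kernel is nontrivial: there is at least a one-dimensional space of solutions $[c_1,\ldots,c_{m+1}]^t$. The only thing to check is that we can normalize so that $c_1 > 0$, i.e.\ that $c_1 \neq 0$ for some kernel vector. This follows because the $m \times m$ minor obtained by deleting the first column is a Vandermonde-type minor in the distinct positive reals $a_1,\ldots,a_m$ with exponents $d_2 > \cdots > d_{m+1}$, hence nonzero; so in any kernel vector with $c_1 = 0$ all the remaining $c_i$ would vanish too. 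Thus every nonzero kernel vector has $c_1 \neq 0$, and after scaling we get $c_1 > 0$.

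For the second claim I would use the structure of $s$. In both branches $s \in \Spc$ (either $s = \ceil{\Sign(f)}^+$, which lies in $\Spc$ by Lemma \ref{survivingcoefficientsforceil} together with the lemma preceding it, or $s = \ceil{\Sign(f)} \in \Spc$ by Lemma \ref{firstthingsonSpc}), so $\#\supp(s) = \mathfrak{d}(s) = m+1$ in both cases, and moreover $\supp(\Sign(f)) \subseteq \supp(s)$ so writing $f = e_1 X^{d_1} + \cdots + e_{m+1} X^{d_{m+1}}$ makes sense with all $e_i \geq 0$. The key combinatorial input is that the \emph{odd-indexed} exponents $d_1, d_3, \ldots$ belong to $\supp(s)$ in such a way that — by Lemma \ref{survivingcoefficients} (resp.\ Lemma \ref{survivingcoefficientsforceil}) — the coefficients of $f$ at those positions are already strictly positive, i.e.\ $e_{2i-1} > 0$ whenever $2i-1 \in [m+1]$; whereas the even-indexed positions $d_2, d_4, \ldots$ are exactly the "slack" positions that $\lceil \cdot \rceil$ and $(\cdot)^+$ may have added. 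Now $g = f + th$ has coefficient $e_j + t c_j$ at $X^{d_j}$. I would argue sign by sign: $c_1 > 0$ by construction, so iterating the linear relations (an argument parallel to the alternating-sign analysis in the proof of Proposition \ref{rmk:mucalculation}) forces $\Sign(c_j) = (-1)^{j-1}$ for all $j$, i.e.\ $h$ has alternating signs. Hence at odd positions $e_{2i-1} + t c_{2i-1} > 0$ automatically (both terms nonnegative, the first strictly positive), and at even positions $c_{2i} < 0$, so $e_{2i} + t c_{2i} \geq 0$ precisely when $t \leq e_{2i}/|c_{2i}|$; the choice $t = \min_i |e_{2i}/c_{2i}|$ is exactly the largest value making all of these nonnegative. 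Therefore $g \in \Qnn[X]$.

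The main obstacle I expect is pinning down the sign pattern of $h$ rigorously: one must show that the single constraint $c_1 > 0$ together with the $m$ equations propagates to force strict alternation $\Sign(c_j) = (-1)^{j-1}$, using that $\supp(s) = \{d_1 > \cdots > d_{m+1}\}$ is a union of even-length blocks with gaps of size $\geq 2$ (the membership in $\Spc$), so that $h$ can be viewed as essentially a product of factors $(X - a_i)$ times a monomial times block-local corrections, and Descartes' rule of signs (or a direct Vandermonde/cofactor computation as in the proof of Theorem \ref{inductiveupperbound}) controls the signs. I would lean on a Cramer's-rule expression for the $c_j$: each $c_j$ is, up to a common positive factor, $(-1)^{j-1}$ times a positive Vandermonde minor, which gives the alternation immediately. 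Once that is in hand, the nonnegativity of $g$ is the routine verification sketched above, and termination of the algorithm is clear since every step is a finite linear-algebra computation.
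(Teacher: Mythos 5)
Your argument is correct, but it takes a genuinely different route from the paper's. The paper obtains both the solvability of the system and the alternating sign pattern $\tau_{d_i}=(-1)^{i+1}$ of $h$ by appealing to the optimality of Descartes' rule of signs (Corollary \ref{realization}): since $s\in\Spc$ with $\#\supp(s)=m+1$, one can prescribe an alternating sign sequence $\sigma$ supported on $\supp(s)$ with $\SC(\sigma)=m$, and the realizability theorem produces $h$ vanishing exactly at $a_1,\ldots,a_m$ with $\Sign(h)=\sigma$; the CMP-ness of $g=f+th$ then follows via Lemmas \ref{survivingcoefficients} and \ref{survivingcoefficientsforceil}. You instead argue purely by linear algebra: the $m\times(m+1)$ matrix has a $1$-dimensional kernel, the maximal Vandermonde-type minors are all nonzero of a fixed sign, and the cofactor formula for the kernel vector gives $\Sign(c_j)=(-1)^{j-1}$ after normalizing $c_1>0$. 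This is more elementary and self-contained (no appeal to the realizability theorem), which is a nice trade-off; the paper's route, by contrast, reuses machinery it needs anyway and keeps all sign-pattern reasoning inside the $\Sp$/$\mathfrak{d}$ framework. One small correction to your sketch: Lemmas \ref{survivingcoefficients}(2) and \ref{survivingcoefficientsforceil} guarantee that the \emph{even}-indexed exponents $d_{2i}$ (not the odd ones) already lie in $\supp(\Sign(f))$, so it is $e_{2i}$ that is strictly positive, while the odd-indexed $d_{2i-1}$ are the possibly newly-added ``slack'' positions. This swap does not damage your CMP-ness verification (at odd positions $e_{2i-1}\geq 0$ suffices, and the choice of $t$ handles the even positions regardless), but the correct reading is what makes $t>0$ and hence $\Sign(f)\prec\Sign(g)$ in Corollary \ref{inductivestep}.
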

\begin{proof}
That the system in line \ref{findshift} has a solution follows from the optimality of Descartes' rule of signs.

Next, we show that $g \in \Qnn[X]$.
Consider $h$ in line \ref{theshift}, and let $\tau := \Sign(h)$.
By the argument above, $h(a_i)=0$ for $i \in [m]$, and  $\tau_{d_{i}}=(-1)^{i+1}$ for $i \in [m+1]$.

  Hence, we obtain $f+t h \in \bb{Q}_{\geq 0}[X]$ by Lemma \ref{survivingcoefficients} and \ref{survivingcoefficientsforceil}.
\end{proof}



By the analysis of $h$ above and the fact that $t$ is the maximum scalar such that $f+th \in \Qnn[X]$;
\begin{cor}\label{inductivestep}
For Algorithm \ref{algorithmincrement}, the following hold:
\begin{enumerate}
 \item $g(a_{i})=f(a_{i})$ for $i \in [m]$.
 \item\label{supportsize} Put $s^{\prime} := \Sign(g)$. The pair $(s,s^{\prime})$ satisfies the assumption of Lemma \ref{increment}.
 In particular, $\Sign(f) \preceq s \prec s^{\prime}$,
 \[\mathfrak{d}(s^{\prime})=\mathfrak{d}(s)=m+1,\ \mbox{and} \ \#\supp(s') \leq m.\]
 \item $g(a)>f(a)$ for $a > a_{m}$.
 \item $\Sign((1-u)f+ug)=s$ for $0<u<1$.
\end{enumerate}
\end{cor}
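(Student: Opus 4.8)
\textbf{Proof proposal for Corollary \ref{inductivestep}.}

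The plan is to read off all four items from the structure of $h$ established in the proof of Proposition \ref{termination} together with the choice of $t$, keeping careful track of which case ($\mathfrak{d}(f)=m$ versus $\mathfrak{d}(f)=m+1$) we are in only where it matters. Recall from that proof that $h = c_1 X^{d_1} + \cdots + c_{m+1}X^{d_{m+1}}$ vanishes at $a_1,\dots,a_m$, has $\Sign(h)_{d_i} = (-1)^{i+1}$, and in particular has positive leading coefficient $c_1 > 0$; and that $t = \min_{i=1}^{\lfloor (m+1)/2\rfloor} |e_{2i}/c_{2i}|$ is by construction the largest scalar with $f + th \in \Qnn[X]$ (the odd-indexed coefficients $e_{2i-1} + t c_{2i-1}$ stay nonnegative automatically since $c_{2i-1} > 0$ and $e_{2i-1}\ge 0$, while each even-indexed coefficient $e_{2i} + t c_{2i}$ is nonnegative precisely because $t \le |e_{2i}/c_{2i}|$ and $c_{2i} < 0$, i.e. $\Sign(h)_{d_{2i}} = -$).

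For item (1), since $h(a_i) = 0$ for $i \in [m]$ and $g = f + th$, we get $g(a_i) = f(a_i)$ immediately. For item (2): $g = f + th$ has support contained in $\supp(s) = \{d_1 > \cdots > d_{m+1}\}$, and the coefficient of $X^{d_i}$ in $g$ is $e_i + t c_i$; this is strictly positive for odd $i$ (as just noted), so $d_1, d_3, d_5, \ldots$ all survive in $\supp(s')$, while for at least one even index $2i_0$ achieving the minimum in the definition of $t$ the coefficient $e_{2i_0} + t c_{2i_0}$ vanishes. Hence $\supp(s') \subsetneq \supp(s)$ and $\supp(s)\setminus\supp(s') \subseteq \{d_{2i} \mid 1 \le 2i \le m+1\}$, which is exactly the hypothesis of Lemma \ref{increment} with the roles of $s, s'$ there played by $s', s$ (note $s \in \Spc$: in the first case $s = \lceil \Sign(f)\rceil^+ \in \Spc$, in the second $s = \lceil \Sign(f)\rceil \in \Spc$, using Lemma \ref{firstthingsonSpc} and the lemma preceding Lemma \ref{survivingcoefficientsforceil}). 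Lemma \ref{increment} then yields $s \prec s'$, $\lfloor s'\rfloor = s$, and $\mathfrak{d}(s') = \mathfrak{d}(s) = m+1$; the bound $\#\supp(s') \le m$ follows since $\#\supp(s') < \#\supp(s) = m+1$. Finally $\Sign(f) \preceq s$ holds because $s$ is either $\lceil \Sign(f)\rceil$ or $\lceil\Sign(f)\rceil^+ \succeq \lceil \Sign(f)\rceil \succeq \Sign(f)$ by Lemma \ref{generalhom}.

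For item (3), both $f$ and $h$ have $d_1$ as their top exponent with positive leading coefficients $e_1 \ge 0$ (in fact we may note $e_1 > 0$ when relevant, but nonnegativity suffices) and $c_1 > 0$, so $g = f + th$ has leading coefficient $e_1 + tc_1 > 0$ at exponent $d_1 \ge \deg(f)$; since $h(a_i) = 0$ for $i \le m$ and $h$ has no positive roots beyond... — more carefully: $g - f = th$ vanishes at $a_1 < \cdots < a_m$ and $\SC(\Sign(th)) = m$ by the sign pattern of $h$, so by Descartes' rule $th$ has no further positive root and a positive leading coefficient, hence $th(a) > 0$, i.e. $g(a) > f(a)$, for all $a > a_m$. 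For item (4), $(1-u)f + ug = f + ut\,h$ for $0 < u < 1$, and $ut \in (0, t)$ lies strictly below the threshold $t$, so every even-indexed coefficient $e_{2i} + ut\,c_{2i}$ is strictly positive while the odd-indexed ones remain positive; thus the support of $(1-u)f + ug$ is exactly $\supp(s)$, i.e. $\Sign((1-u)f+ug) = s$.

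The only genuinely delicate point is the bookkeeping in item (2) — confirming that the surviving exponents are precisely the odd-indexed ones and that at least one even-indexed coefficient is annihilated — together with verifying that $s \in \Spc$ in both branches so that Lemma \ref{increment} applies; everything else is a direct consequence of Descartes' rule of signs and the maximality of $t$.
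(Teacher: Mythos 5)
Your proposal carries out exactly the expansion that the paper's one-line justification gestures at --- read everything off the sign pattern of $h$ established in Proposition~\ref{termination} together with the maximality of $t$ --- so the approach matches. Two points need fixing, though.

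First, the parenthetical ``with the roles of $s,s'$ there played by $s',s$'' is a slip: no swap occurs. Lemma~\ref{increment} names the larger-support sequence $s$ and the smaller one $s'$, which is exactly the labelling you already have ($\supp(s')\subsetneq\supp(s)$), and the conclusion you then draw ($s\prec s'$, $\lfloor s'\rfloor = s$, $\mathfrak{d}(s')=\mathfrak{d}(s)$) is the one without any role swap; the parenthetical contradicts the rest of your own paragraph. Second, and more substantively, strict positivity of $t$ is used in several places but never established. The claim that the odd-indexed coefficients $e_{2i-1}+tc_{2i-1}$ of $g$ are ``strictly positive (as just noted)'' does not follow from what you actually noted, which was only nonnegativity: $e_{2i-1}$ can be $0$ --- for instance $e_1=0$ whenever $\lceil\cdot\rceil$ or $(\cdot)^+$ pushes the top index above $\deg f$ --- so you genuinely need $t>0$. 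The same hidden assumption appears in item~(3) (for $\SC(\Sign(th))=m$ and $th(a)>0$) and in item~(4) (for the even-indexed coefficients $e_{2i}+utc_{2i}$ to be strictly positive). The missing ingredient is that $e_{2i}>0$ for every even $2i\in[m+1]$, which is exactly Lemma~\ref{survivingcoefficients}(2) in the branch $s=\lceil\Sign(f)\rceil$ and Lemma~\ref{survivingcoefficientsforceil} in the branch $s=\lceil\Sign(f)\rceil^{+}$ --- the same two lemmas the paper cites at the end of the proof of Proposition~\ref{termination}. Once you invoke them, $t=\min_i|e_{2i}/c_{2i}|>0$ and every step of your argument goes through.
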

Now, we can describe our algorithm:
\begin{figure}[t]
  \begin{algorithm}[H]
    \caption{Find the minimal polynomial}
    \label{findtheminimalpolynomial}
    \begin{algorithmic}[1]
      \Require $n \geq 1$, $S=\{(a_i,b_i)\}_{i=1}^n \subset \bb{Q}_{>0} \times \Qnn$ with $a_{1}<\cdots<a_{n}$.
      \Ensure The minimal polynomial of $S$ if it exists. Otherwise, ``none."
      \State $g_1 \leftarrow b_1 \in \Qnn[X]$\label{initialg}
      \For{{$k=2, \ldots, n$}} \label{inductiveconstruction}
      \If{$b_k=g_{k-1}(a_k)$} \label{digression} 
      \State\label{formerg} $g_k \leftarrow g_{k-1}$.
      \ElsIf{$b_k < g_{k-1}(a_k)$ or $\mathfrak{d}(g_{k-1}) < k-1$}
      \State {\bf return} ``none" \Comment{We stop the computation in this case.} \label{nonecase} 
      \Else \Comment{The case when $g_{k-1}(a_k)<b_k$ and $\mathfrak{d}(g_{k-1}) = k-1$.}
      \State $g_{k,0} \leftarrow g_{k-1}$\label{abovecriterionapplies}
      \State $i \leftarrow 0$
      \While{$g_{k,i}(a_k) < b_k$} \label{iteratedincrements}
      \State\label{recursivecall} $g_{k,i+1} \leftarrow {\rm Increment}(g_{k,i}, k-1, a_1, \ldots, a_{k-1})$
      \State\label{pushthecounter} $i \leftarrow i+1$
      \EndWhile
      \State $t \leftarrow (b_k-g_{k,i-1}(a_k))/(g_{k,i}(a_k)-g_{k,i-1}(a_k))$
      \State$g_k \leftarrow (1-t)g_{k,i-1} + t g_{k,i}$\label{newg} 
     \EndIf
      \EndFor
      \State {\bf return} $g_n$
    \end{algorithmic}
  \end{algorithm}
\end{figure}
\begin{thm}
Algorithm \ref{findtheminimalpolynomial} outputs the minimal polynomial $f \in \bb{Q}_{\geq 0}[X]$ of $S$ if it exists and ``none'' if there is none. 
\end{thm}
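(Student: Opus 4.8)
The plan is to prove correctness of Algorithm~\ref{findtheminimalpolynomial} by induction on the loop index $k$ in line~\ref{inductiveconstruction}, maintaining the invariant that after processing $(a_k,b_k)$ the polynomial $g_k$ is the minimal polynomial of $S_k := \{(a_i,b_i)\}_{i=1}^k$ (provided the algorithm has not already returned ``none'', and provided a minimal polynomial of $S_k$ exists). The base case $k=1$ is immediate: $g_1 = b_1$ interpolates $\{(a_1,b_1)\}$ and $\mathfrak{d}(b_1) \le 1 = \#S_1$, so $g_1$ is the minimal polynomial of $S_1$. For the inductive step I would assume $g_{k-1}$ is the minimal polynomial of $S_{k-1}$ and split into the three branches of the conditional.

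First I would handle the two easy branches. If $b_k = g_{k-1}(a_k)$ (line~\ref{digression}), then $g_{k-1}$ already interpolates $S_k$, and since $\mathfrak{d}(g_{k-1}) \le k-1 \le k = \#S_k$, it is the minimal polynomial of $S_k$; so setting $g_k := g_{k-1}$ is correct. If $b_k < g_{k-1}(a_k)$, then by Theorem~\ref{equivalence}(\ref{weakminimality}) (minimality of $g_{k-1}$, applied with $t = a_k > a_{k-1}$) \emph{every} CMP interpolating $S_{k-1}$ takes a value $\ge g_{k-1}(a_k) > b_k$ at $a_k$, so no CMP interpolates $S_k$ and by Theorem~\ref{existenceofminimalpolynomial} there is no minimal polynomial; returning ``none'' is correct. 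If instead $\mathfrak{d}(g_{k-1}) < k-1$, then $g_{k-1}$ is in fact the minimal polynomial of $S_{k-2}$ as well (its $\mathfrak{d}$ is $\le k-2$), and $g_{k-1}(a_k) < b_k$; I would invoke Theorem~\ref{abovecriterion} in the contrapositive --- or rather note that here $g_{k-1}$ interpolating $S_{k-1}$ with $\mathfrak{d}(g_{k-1}) = k-2 < k - 1$ means $g_{k-1}$ is the \emph{unique} CMP interpolating $S_{k-1}$ by Corollary~\ref{meaningofmu}(\ref{uniqueness}), hence $g_{k-1}(a_k) < b_k$ forces non-interpolability of $S_k$ --- so returning ``none'' is again correct. (I should double-check that this second sub-case of line~\ref{nonecase} is exactly captured by the uniqueness argument; that is the one place where the bookkeeping needs care.)

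The substantive branch is the last one: $g_{k-1}(a_k) < b_k$ and $\mathfrak{d}(g_{k-1}) = k-1$. Here Theorem~\ref{abovecriterion} guarantees that $S_k$ has a minimal polynomial. I would analyze the inner while-loop (lines~\ref{iteratedincrements}--\ref{pushthecounter}). Using Corollary~\ref{inductivestep}: each call to $\mathrm{Increment}$ produces $g_{k,i+1}$ with $g_{k,i+1}(a_j) = g_{k,i}(a_j) = b_j$ for $j \in [k-1]$, with $g_{k,i+1}(a) > g_{k,i}(a)$ for $a > a_{k-1}$ (in particular the sequence $g_{k,i}(a_k)$ is strictly increasing), with $\mathfrak{d}(g_{k,i+1}) = k$, and with $\Sign(g_{k,i}) \prec \Sign(g_{k,i+1})$ strictly ascending in $(\Sp,\preceq)$. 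Termination of the while-loop: I would argue that $g_{k,i}(a_k) \to \infty$ as $i \to \infty$ --- this needs that the degrees $\deg(g_{k,i})$ diverge (Lemma~\ref{well-founded}(\ref{degreediverge}), applied to the strictly $\prec$-ascending chain of signs), together with the lower bound on leading coefficients coming from the Increment construction and the fact that $a_k$ is the largest node; actually the cleanest route is: $\mathfrak{d}(g_{k,i}) = k$ throughout and $g_{k,i}$ is the minimal polynomial of the $k$-point set $S_{k-1} \cup \{(a_k, g_{k,i}(a_k))\}$ (it interpolates it and $\mathfrak{d} = k = $ size), so since the leading monomials strictly grow, $g_{k,i}(a_k)$ is unbounded; hence the loop halts. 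On exit at index $i$ we have $g_{k,i-1}(a_k) < b_k \le g_{k,i}(a_k)$, and line~\ref{newg} takes the convex combination $g_k = (1-t)g_{k,i-1} + t g_{k,i}$ with $t \in (0,1]$ chosen so that $g_k(a_k) = b_k$; for $j \in [k-1]$, $g_k(a_j) = b_j$ since both endpoints agree there. Finally I must show $g_k$ is the minimal polynomial of $S_k$, i.e.\ $g_k \in \Qnn[X]$ and $\mathfrak{d}(g_k) \le k$. Non-negativity: convex combinations of CMPs are CMPs. For $\mathfrak{d}(g_k) \le k$: if $t < 1$, by Corollary~\ref{inductivestep}(4) $\Sign(g_k) = \Sign((1-t)g_{k,i-1} + t g_{k,i}) = \Sign(g_{k,i-1}\text{'s increment target }s)$, wait --- more precisely $\Sign$ of the convex combination equals the intermediate sign sequence $s$ from the Increment call, which satisfies $\mathfrak{d}(s) = k$; if $t = 1$ then $g_k = g_{k,i}$ with $\mathfrak{d} = k$. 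Either way $\mathfrak{d}(g_k) = k \le k = \#S_k$, so $g_k$ is the minimal polynomial of $S_k$. This closes the induction, and the case $k = n$ gives the theorem; the ``none'' outputs are justified above. I expect the main obstacle to be the termination argument for the inner while-loop --- pinning down rigorously why $g_{k,i}(a_k) \to \infty$ rather than converging to some finite limit below $b_k$ --- together with carefully matching the second condition in line~\ref{nonecase} to the right impossibility statement.
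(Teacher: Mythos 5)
Your overall structure — induction on $k$, case analysis on the three branches — mirrors the paper's proof, and your handling of the branches that return ``none'' is sound (you route through Theorem~\ref{equivalence}(\ref{weakminimality}), Theorem~\ref{existenceofminimalpolynomial}, and Corollary~\ref{meaningofmu}, where the paper cites Corollary~\ref{meaningofmu} and Theorem~\ref{abovecriterion}; both are valid). The post-loop bookkeeping (interpolation preserved by the convex combination, $\mathfrak{d}(g_k)\le k$ via Corollary~\ref{inductivestep}) is also fine.

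The genuine gap is exactly where you yourself flag it: the termination of the inner \textbf{while}-loop. You aim to show $g_{k,i}(a_k)\to\infty$, but neither of your two attempted routes is completed, and in fact the ``cleanest route'' you sketch does not work as stated: observing that $g_{k,i}$ is the minimal polynomial of the $k$-point set $S_{k-1}\cup\{(a_k,g_{k,i}(a_k))\}$ together with the divergence of $\deg(g_{k,i})$ does \emph{not} by itself force the values $g_{k,i}(a_k)$ to be unbounded (Example~\ref{baseexample} already shows that minimal polynomials of bounded data can have arbitrarily large degree). What you are missing is that you should not try to prove unboundedness at all; you should compare $g_{k,i}$ against the minimal polynomial $h$ of $S_k$, whose existence is already guaranteed in this branch by Theorem~\ref{abovecriterion}. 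Concretely: suppose for contradiction that the loop never ends, so $g_{k,i}(a_k)<b_k$ for all $i$. By Lemma~\ref{well-founded}(\ref{degreediverge}) applied to the strictly $\prec$-ascending chain $\Sign(g_{k,i})$, eventually $\deg(g_{k,i})>\deg(h)$. For such an $i$, the leading coefficient of $g_{k,i}-h$ is that of $g_{k,i}$, hence positive; by Lemma~\ref{achieve} a sign sequence attaining $\mathfrak{d}(g_{k,i})$ must end in a minus, so $\SC(\Sign(g_{k,i}-h))\le\mathfrak{d}(g_{k,i})-1\le k-1$. Descartes' rule then forces the positive roots of $g_{k,i}-h$ to be exactly the simple roots $a_1,\dots,a_{k-1}$, and positivity of the leading coefficient gives $(g_{k,i}-h)(a_k)>0$, i.e.\ $g_{k,i}(a_k)>h(a_k)=b_k$, a contradiction. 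This comparison with $h$ is the key step your proposal lacks.
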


\begin{proof}
For each $k \in [n]$, define $S_k := \{(a_i,b_i)\}_{i=1}^k$.

Inductively on $k \in [n]$, we show the following:
\begin{itemize}
 \item if $S_k$ has a minimal polynomial $h$, then $g_k$ is the one.
 \item otherwise, the algorithm returns ``none."
\end{itemize}

The base case $k=1$ is clear.
Assume that $k \geq 2$.
If $S_{k-1}$ does not have a minimal polynomial, then neither does $S_k$ by Theorem \ref{existenceofminimalpolynomial}, and the algorithm returns ``none" by induction hypothesis.
Suppose $S_{k-1}$ has a minimal polynomial, which is $g_{k-1}$ by induction hypothesis, but $S_k$ does not, then there are two cases by Corollary \ref{meaningofmu} and Theorem \ref{abovecriterion}:
\begin{itemize}
\item $\mathfrak{d}(g_{k-1})<k-1$ and $b_k \neq g_{k-1}(a_k)$.
\item $\mathfrak{d}(g_{k-1})=k-1$ and $b_k < g_{k-1}(a_k)$.
\end{itemize}

Hence, the algorithm returns ``none" by line \ref{nonecase}.
%

Assume that $S_{k-1}$ has a minimal polynomial, which is $g_{k-1}$, and that $S_k$ also has a minimal polynomial $h$.
If $\mathfrak{d}(g_{k-1})<k-1$, then $g_{k-1}=h$ and $g_{k-1}(a_k)=b_k$ by Corollary \ref{meaningofmu}, and $g_k=g_{k-1}=h$ by line \ref{formerg}.
If $\mathfrak{d}(g_{k-1})=k-1$, then we have $b_k \geq g_{k-1}(a_k)$ by Theorem \ref{abovecriterion}.
The case when $b_k = g_{k-1}(a_k)$ is similar to above.
We consider the case $b_k > g_{k-1}(a_k)$.
By Corollary \ref{inductivestep}, it holds that:
 \begin{align*}
     &\Sign(g_{k,0})<\Sign(g_{k,1}) < \cdots,\\
 &\mathfrak{d}(g_{k,0})=k-1, \quad \mathfrak{d}(g_{k,i}) = k, \quad (i =1,2,\ldots) \\
 &g_{k,0}(a_{k}) < g_{k,1}(a_{k}) < g_{k,2}(a_{k}) < \cdots .
 \end{align*}

  For the sake of contradiction, assume there is no critical $i$ which ends the \textbf{while}-loop in line \ref{iteratedincrements}.
  Since $\Sign(g_{k,i}) < \Sign(g_{k,i+1})$, we have
 $\deg(g_{k,i}) \rightarrow \infty$ as $i \rightarrow \infty$
by Lemma \ref{well-founded}.
 Consider sufficiently large $i$ such that $\deg(g_{k,i}) > \deg(h)$.
 Then we have $\SC(g_{k,i}-h) \leq k-1$ by
 $\mathfrak{d}(g_{k,i}) \leq k$ and Lemma \ref{achieve}.
 Hence, by Descartes' rule of signs, all the roots of $g_{k,i}-h$ are $a_{1}, \ldots, a_{k-1}$, which implies $g_{k,i}(a_{k}) > h(a_{k})=b_{k}$, a contradiction.
 Thus the algorithm finds a polynomial $g_k$.
 
 Lastly, $\mathfrak{d}(g_{k})\leq k$ by the definition of $g_{k}$ and Corollary \ref{inductivestep}.
\end{proof}

In order to consider the time complexity of the two algorithms, we measure the size of $f$ as follows:
\begin{defi}
 For $f:= c_1 X^{d_1} + \cdots + c_n X^{d_n} \in \bb{Q}[X]$, where $c_i \neq 0$ for $i \in [n]$, set $\sz(f) := \sum_{i=1}^n \sz(c_i)$.
\end{defi}

\begin{lemm}\label{complexityofalgorithmincrement}
    Algorithm \ref{algorithmincrement} runs in
    $\poly(m,\sz(f), \sum_{i=1}^m \sz(a_i),\deg(f))$-time. 
    Note that $m$ is redundant since $\sum_{i=1}^m \sz(a_i) \geq m$. We put it here for readability.
\end{lemm}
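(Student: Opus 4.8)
The plan is to observe that Algorithm \ref{algorithmincrement} consists of a constant number of steps and to bound, step by step, both the running time and the bit-size $\sz$ (and, where relevant, the degree) of the data each step produces, uniformly in the four parameters. Write $L := \sum_{i=1}^{m}\sz(a_i)$ and $D := \deg(f)$. Once every intermediate object is shown to have $\sz$ bounded by a polynomial in $(m,\sz(f),L,D)$ and to be computable in polynomial time from its predecessors, composing finitely many polynomial bounds gives the lemma.

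First I would dispose of the combinatorial bookkeeping. Computing $\mathfrak{d}(f) = \mathfrak{d}(\Sign(f))$ only requires reading off the block decomposition $X_0,\dots,X_k$ of $\supp(\Sign(f))$ and plugging it into the formula of Proposition \ref{rmk:mucalculation}, which takes $\poly(\deg(f),\#\{\text{terms of }f\}) = \poly(m,D)$ time (recall $\mathfrak{d}(f)\in\{m,m+1\}$ by hypothesis); producing $s = \ceil{\Sign(f)}^{+}$ or $s=\ceil{\Sign(f)}$ from the explicit definitions is equally cheap. The key point here is the degree bound $\deg(s)\le \deg(f)+1$: the operation $\ceil{\cdot}$ appends $\max X_i+1$ only for odd blocks, so it raises the degree by at most one, while $(\cdot)^{+}$ fills the lowest gap and leaves the degree unchanged. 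Hence in $\supp(s)=\{d_1>\dots>d_{m+1}\}$ we have $d_1\le D+1$, so every entry of $M=(a_i^{d_j})$ satisfies $\sz(a_i^{d_j}) = O(d_j\,\sz(a_i)) = O(D\cdot L)$, and $M$ is written down in polynomial time.

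The crux is the homogeneous system in line \ref{findshift}. Since the nodes $a_1<\dots<a_m$ are distinct and the exponents $d_1>\dots>d_{m+1}$ are distinct, $M$ is a generalized Vandermonde matrix of full row rank $m$ (this is precisely what makes Proposition \ref{termination} work), so its kernel is spanned by the vector of signed maximal minors $c_j := (-1)^{j-1}\det\widehat{M}_j$, where $\widehat{M}_j$ is $M$ with column $j$ removed; each $\widehat{M}_j$ is itself a generalized Vandermonde matrix with positive nodes, hence $\det\widehat{M}_j\ne 0$, and an overall sign makes $c_1>0$ as the algorithm demands. By Hadamard's inequality (or by clearing denominators row-by-row and bounding an integer Vandermonde-type determinant), $\sz(\det\widehat{M}_j) = \poly(m,D,L)$, and each determinant is evaluated in polynomial time; alternatively one may simply invoke the classical fact that Gaussian elimination over $\bb{Q}$ runs in polynomial time with polynomially bounded intermediate and output data. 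Either way $[c_1,\dots,c_{m+1}]^{t}$, and therefore $h=\sum_j c_j X^{d_j}$ with $\deg(h)=d_1\le D+1$, are produced in $\poly(m,D,L)$-time with size bounded by the same.

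It remains to assemble $g$. Writing $f=\sum_j e_j X^{d_j}$ is free, since $\supp(\Sign(f))\subseteq\supp(s)$ by construction and $\sz(e_j)\le\sz(f)$. The scalar $t=\min_{i}\lvert e_{2i}/c_{2i}\rvert$ is the minimum of at most $m$ rationals each of size $\le\sz(f)+\poly(m,D,L)$ (all $c_{2i}$ are nonzero, again by the minor computation), so $\sz(t)=\poly(m,\sz(f),D,L)$. Then the coefficient of $X^{d_j}$ in $g=f+th$ equals $e_j+t c_j$, of size $\poly(\sz(f),\sz(t),\sz(c_j))=\poly(m,\sz(f),D,L)$, and $\deg(g)\le d_1\le D+1$; summing the $m+1$ terms takes polynomial time. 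Chaining the polynomially many polynomial bounds yields the stated complexity. The only genuine obstacle is the bit-size control in the linear-algebra step — one must check that the maximal minors of $M$, not just its entries, remain polynomially bounded — but this is routine once one recognizes them as Vandermonde-type determinants, or sidesteps it via the standard polynomial-time guarantee for rational Gaussian elimination.
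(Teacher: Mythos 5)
Your proposal is correct and takes essentially the same approach as the paper's one-line proof, which simply notes that each line of the loop-free algorithm is a polynomial-time operation; you supply the detailed bit-size accounting (the bound $\deg(s)\le\deg(f)+1$, the Vandermonde-minor description of the kernel vector, the polynomial bound on $\sz(t)$) that the paper leaves implicit. One small imprecision to flag: $(\cdot)^{+}$ can in fact raise the degree by one, namely when $\supp(\ceil{\Sign(f)})$ is an initial segment of $\omega$, but in precisely that case $\ceil{\cdot}$ was degree-preserving, so the combined bound $d_1\le\deg(f)+1$ you conclude still holds.
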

\begin{proof}
    Each line of the algorithm refers a polynomial time (with respect to the parameters above) function.
    Since there is no loop or iteration in Algorithm \ref{algorithmincrement}, it runs in polynomial time.
\end{proof}

\begin{thm}\label{complexityoffindtheminimalpolynomial}
    For Algorithm \ref{findtheminimalpolynomial}, let $M$ be the maximum index such that $S_M=\{(a_i,b_i)\}_{i=1}^M$ has a minimal polynomial $g_M$. 
    Let $d := \deg(g_M)$.
    Let $A= \max_i \sz(a_i)$ and $B=\max_i \sz(b_i)$.
    Then Algorithm \ref{findtheminimalpolynomial} runs in $\poly(n,A,B,d)$-time. 
    In particular, it runs in $\expt(n,A)\poly(B)$-time.
\end{thm}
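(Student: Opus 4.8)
The plan is to bound the total work of Algorithm \ref{findtheminimalpolynomial} by tracking two things: the number of times the inner \textbf{while}-loop in line \ref{iteratedincrements} calls \texttt{Increment}, and the size (that is, $\sz$ and $\deg$) of every intermediate polynomial $g_{k,i}$. The outer \textbf{for}-loop runs $n-1$ times, and each iteration outside the \textbf{while}-loop costs only polynomial time (a single evaluation $g_{k-1}(a_k)$ and one convex combination), so the crux is the \textbf{while}-loop. For each $k \leq M$, during the $k$-th iteration the sign sequences satisfy $\Sign(g_{k,1}) \prec \Sign(g_{k,2}) \prec \cdots$ in $\Spc$ by Corollary \ref{inductivestep}, and $\#\supp(\Sign(g_{k,i})) \leq k-1 \leq M - 1$; moreover $\deg(g_{k,i}) \leq \deg(g_{k,i+1}) \leq \cdots$. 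The loop terminates at some index $i_k$, and the final $g_k$ has $\mathfrak{d}(g_k) \leq k$, so $\deg(g_k) \leq \deg(g_M) = d$ once we know $g_M$ is reached; in fact since $\Sign(g_{k,i})$ is increasing along $\prec$ and $g_M$ eventually majorizes, one shows $\deg(g_{k,i}) \leq d$ throughout (a cofactor / Descartes argument as in the termination proof, comparing $g_{k,i}$ with $g_k$ or $g_M$). Hence Lemma \ref{well-founded}(\ref{numberofiterations}) applied to the chain $\Sign(g_{k,1}) \prec \cdots \prec \Sign(g_{k,i_k})$ in $\Spc$ with support size $\leq k \leq n$ and degree $\leq d$ gives $i_k \leq nd$. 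Summing over $k$, the total number of \texttt{Increment} calls is $O(n^2 d)$.

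Next I would control the sizes. Each call to \texttt{Increment} solves a linear system with a Vandermonde-type coefficient matrix whose entries are $a_j^{d_\ell}$ with $d_\ell \leq d$ and $\sum_j \sz(a_j) \leq nA$; by Cramer's rule the solution coordinates $c_\ell$ have $\sz$ bounded by $\poly(n, A, d)$, and similarly the scalar $t = \min_i |e_{2i}/c_{2i}|$ contributes only $\poly$ of the sizes of $f$'s coefficients. So $\sz(g_{k,i+1}) \leq \sz(g_{k,i}) + \poly(n,A,d)$. Iterating this over the $O(nd)$ calls within iteration $k$ and then over $k$, every intermediate $g_{k,i}$ has $\sz(g_{k,i}) \leq \poly(n, A, B, d)$ (the $B$ enters through $g_1 = b_1$ and the convex-combination steps). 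With these size bounds in hand, Lemma \ref{complexityofalgorithmincrement} says each \texttt{Increment} call costs $\poly(m, \sz(f), \sum \sz(a_i), \deg(f)) \leq \poly(n, A, B, d)$; the evaluations $g_{k,i}(a_k)$ and the convex combinations are likewise $\poly(n,A,B,d)$. Multiplying by the $O(n^2 d)$ total calls gives an overall bound of $\poly(n, A, B, d)$.

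For the ``in particular'' clause, I would invoke Theorem \ref{discreteupperbound}: if $S_M$ has a minimal polynomial then $d = \deg(g_M) \leq U$ where $U = (M-1)! \log_\gamma((M+1)! \beta\nu)(\log_\gamma(\alpha\mu))^{M-2}$, and as computed just after Corollary \ref{degreeupperboundseparateversion} this is $\expt(n, A)\poly(B)$ (using $\log_2 \gamma \geq 2^{-2A}$, $\mu, \nu \leq 2^{nA}$ or $2^{nB}$ respectively, and $\alpha \leq 2^A$). Substituting $d \leq \expt(n,A)\poly(B)$ into $\poly(n,A,B,d)$ collapses to $\expt(n,A)\poly(B)$, which is the claimed complexity; and in the ``none'' case the algorithm stops at line \ref{nonecase} no later than iteration $M+1$, so the same bound applies a fortiori.

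The main obstacle I expect is the degree control of the intermediate polynomials $g_{k,i}$: a priori the \textbf{while}-loop could run a long time and inflate the degree before terminating, and Lemma \ref{well-founded}(\ref{numberofiterations}) only bounds the chain length in terms of $\deg(s_l)$, which is $\deg(g_{k,i_k})$ and not obviously $\leq d$. Resolving this requires the same kind of Descartes'-rule comparison used in the termination proof — comparing $g_{k,i}$ against the eventual minimal polynomial $g_k$ (or $g_M$) to force $\SC(\Sign(g_{k,i} - g_k)) \leq k$ and hence $\deg(g_{k,i}) \leq \max(\deg(g_k), \text{stuff})$ — and then feeding that back to close the circular-looking dependence between ``few iterations'' and ``small degree.'' Once that is pinned down, everything else is bookkeeping with Cramer's rule and Lemma \ref{complexityofalgorithmincrement}.
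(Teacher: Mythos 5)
Your proposal gets the outer bookkeeping right (outer loop $\leq n$ iterations, inner \texttt{Increment} chain of length $O(nd)$ via Lemma \ref{well-founded}, then Lemma \ref{complexityofalgorithmincrement}, then Theorem \ref{discreteupperbound} for the ``in particular'' clause), and you are right to flag degree control as a concern. But the size-control step contains a genuine gap that the paper's proof avoids by a different idea.

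You assert $\sz(g_{k,i+1}) \leq \sz(g_{k,i}) + \poly(n,A,d)$ and iterate additively. This cannot hold as stated. In Algorithm \ref{algorithmincrement} one computes $g = f + th$ where $t = \min_j |e_{2j}/c_{2j}|$ depends on the coefficients $e_{2j}$ of $f = g_{k,i}$ itself, so $\sz(t) \leq \sz(g_{k,i}) + \poly(n,A,d)$. Each new coefficient is $e_\ell + t c_\ell$, and $\sz(e_\ell + t c_\ell) \leq \sz(e_\ell) + \sz(t) + \sz(c_\ell) + O(1)$, which is about $\sz(e_\ell) + \sz(g_{k,i}) + \poly(n,A,d)$. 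Summing over the at most $n$ terms yields a recurrence of the shape $\sz(g_{k,i+1}) \leq O(n)\,\sz(g_{k,i}) + \poly(n,A,d)$, and iterating this $O(nd)$ times produces a bound that is exponential, not polynomial, in $nd$. Your additive claim silently drops the $\sz(g_{k,i})$-dependence of $t$, so the circular dependence you worried about for degree control actually bites in the size control.

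The paper closes this gap by a direct, non-iterative argument: every intermediate $g_{k,i}$ produced by \texttt{Increment} has at most $k-1 \leq n$ terms (Corollary \ref{inductivestep}(\ref{supportsize})), all exponents at most $d$ (by the monotonicity of $\Sign(g_{k,i})$ under $\prec$ across the whole run, with $\deg(g_k) = \deg(g_{k,i_k})$ because the top support element survives the convex combination), and it interpolates $S_{k-1}$ (Corollary \ref{inductivestep}). Therefore, writing $g_{k,i} = c_1 X^{d_1} + \cdots + c_l X^{d_l}$, its coefficients are the unique solution of the Vandermonde-type system $\bigl[a_j^{d_\ell}\bigr]_{j,\ell\in[l]} [c_1,\ldots,c_l]^t = [b_1,\ldots,b_l]^t$, whose data have size $\poly(n,A,B,d)$. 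Cramer's rule then gives $\sz(g_{k,i}) \leq \poly(n,A,B,d)$ with no reference to how many \texttt{Increment} calls produced it. This ``solve for $g_{k,i}$ directly from the input'' observation is the key step missing from your proposal. On degree control, your Descartes-rule comparison with $g_M$ would work, but it is unnecessary; the paper's monotonicity argument is simpler and suffices.
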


\begin{proof}
    The dominant part of the algorithm is the iteration started in line \ref{inductiveconstruction}.
    The number of iterations is at most $n$.
    Each iteration includes a loop started in line \ref{iteratedincrements}.
    We have to be careful with line \ref{recursivecall}; the $(i+1)$-th step recursively calls the result of the $i$-th step, and also we need to bound the number of steps before the end of a loop.

    By Lemma \ref{well-founded} (\ref{eventuallyincreasing}) and (\ref{numberofiterations}), the number of steps we need is at most $O(dn)$.

    Despite (at most) polynomially many recursive applications of a polynomial-time algorithm, each $\sz(g_{k,i})$ can be bounded sufficiently; 
    first, by the monotonicity of $\Sign(g_{k,i})$ with respect to $<$, we have $\deg(g_{k,i}) \leq d$. 
    Furthermore, $\sz(g_{k,i}) \leq \poly(n,A,B,d)$.
    Indeed, $g_{k,i}$ is a result of Algorithm \ref{algorithmincrement}, thus
    $\#\supp(g_{k,i}) \leq k-1$ by Corollary \ref{inductivestep} (\ref{supportsize}).
    Hence, if 
    \[g_{k,i} = c_1 X^{d_1} + \cdots + c_{l}X^{d_l} \quad (d_1 < \cdots < d_l,\ l \leq k-1),\]
    then the coefficients form the solution of the following system:
     \[\begin{bmatrix}
 a_{1}^{d_{1}} & \cdots & a_{1}^{d_{l}}\\
   \vdots & & \vdots \\
  a_{l}^{d_{1}} & \cdots & a_{l}^{d_{l}}\\
\end{bmatrix}
\begin{bmatrix}
c_{1}\\
\vdots \\
c_{l}
\end{bmatrix}
=\begin{bmatrix}
b_{1}\\
\vdots \\
b_{l}
\end{bmatrix}.\]
Note that the coefficient matrix is a submatrix of a Vandermonde matrix, and therefore non-singular.

Now, the complexity of Algorithm \ref{findtheminimalpolynomial} follows from Lemma \ref{complexityofalgorithmincrement}.

The last stated complexity follows from Theorem \ref{discreteupperbound}.
\end{proof}

For $\{(a_i,b_i)\}_{i=1}^n$ as above, Bottema--Halpern--Moran constructed in \cite{Interpolationbypolynomialswithnonnegativecoefficients} an algorithm which tells whether there exists an interpolating CMP.
We can describe the relation between our algorithm and the algorithm in \cite{Interpolationbypolynomialswithnonnegativecoefficients} as follows:\\
In \cite{Interpolationbypolynomialswithnonnegativecoefficients}, they use the normalization $a_n=b_n=1$ and defined a clam $\mathcal{C}_{n-1}\subset \mathbb{R}^{n-1}$ as the convex hull of the set
\[
\{(a_{n-1}^i,\ldots, a_1^i)\}_{i=0}^{\infty}.
\]
Note that the order of coordinates adopted in \cite{Interpolationbypolynomialswithnonnegativecoefficients} is opposite to ours.
For $n\in \omega$, put
\begin{align*}
\scr{S}_{+,n}&:=\{s\in \scr{S}_{+}\mid \mathfrak{d}(s)=n\}\\
\scr{S}_{+,\leq n}&:=\{s\in \scr{S}_{+}\mid \mathfrak{d}(s)\leq n\}.
\end{align*}
For $s \in \scr{S}_{+,\leq n}$, let $\Delta_s^{(n-1)}\subset \mathbb{R}^{n-1}$ be an open simplex defined as the interior of the convex hull of $\{(a_{n-1}^i,\ldots, a_1^i)\mid i\in \supp(s)\}$ in its affine hull.
Then we have a simplicial decomposition
\[
\mathcal{C}_{n-1}=\coprod_{s\in \scr{S}_{+,\leq n}} \Delta_s^{(n-1)}.
\]
Moreover, $\{\Delta_s^{(n-1)} \mid s\in \scr{S}_{+,n-1}\}$ is precisely the set of surfaces of $\mathcal{C}_{n-1}$.

In the algorithm in \cite{Interpolationbypolynomialswithnonnegativecoefficients}, they choose an initial surface at first, and move to an adjacent surface one after another until they reach the surface intersecting the line through $(b_{n-1},...,b_2,b_1)$ parallel to the axis for the last coordinate.
(Note that surfaces are only on the top or only on the bottom according to the parity of $n$. See also \cite{Interpolationbypolynomialswithnonnegativecoefficients} \S 2 and its Definition 5 for their treatment of TOP and BOT.)
One can see that the surface they finally reach corresponds to the support of the minimal polynomial of $\{(a_i,b_i)\}_{i=2}^n$.
Therefore, the algorithm in \cite{Interpolationbypolynomialswithnonnegativecoefficients} for $\{(a_i,b_i)\}_{i=1}^n \cup \{(a_0,b_0)\}$ with $0<a_0<a_1$ and $0<b_0<b_1$ gives the minimal polynomial of $\{(a_i,b_i)\}_{i=1}^n$.

Our algorithm involves varying $s\in \scr{S}_{+}$ one after another by Algorithm 2, which corresponds to moving from one surface to another.
Thus the algorithms in \cite{Interpolationbypolynomialswithnonnegativecoefficients} and in this paper are similar in some way.
However, while the algorithm in \cite{Interpolationbypolynomialswithnonnegativecoefficients} determines the next surface by a greedy way, ours describes a more concrete form of the next surface.
Our algorithm makes it clear that the path is along the order in \S \ref{An order structure of sign sequences}.
In particular, it is clear from the construction that our method only involves shifts of sign sequences \textbf{to the right} (e.g. a shift like $++0\to 0++$), while it is not clear in \cite{Interpolationbypolynomialswithnonnegativecoefficients}. 
See also \cite{Interpolationbypolynomialswithnonnegativecoefficients} \S 3 for their analysis on ``adjacent hyperplanes."
\begin{rmk}
In the algorithm in this paper, the next surface we are moving to is not necessarily the adjacent one.
We may skip some surfaces.
\end{rmk}
\section{Open Problems}\label{Open Problems}
Theorem \ref{discreteupperbound} is optimal in the following sense: there is a sequence $S_{k}=\{(a_{k,i},b_{k,i})\}_{i=1}^{n_{k}}$ such that: 
\begin{itemize}
\item $n_{k}, A_{k}:=\max_{i=1}^{n_{k}}{\sz(a_{k,i})}$, and $B_{k}:= \max_{i=1}^{n_{k}}{\sz(b_{k,i})}$ are all $\leq \poly(k)$,
\item Each $S_{k}$ has a minimal polynomial $f_{k}$ with $\deg(f_{k}) = 2^{k^{\Omega(1)}}$.
\end{itemize}
Indeed, if we set 
\[n_{k}:=2,\ (a_{k,1},b_{k,1}):=(1,1) \ \mbox{and}\  (a_{k,2},b_{k,2}):=(1+2^{-k}, 2),\]
then, by Example \ref{baseexample}, each $S_{k}$ has a minimal polynomial $f_{k}$, and
$\deg f_{k} = \lceil \log2 / \log(1+2^{-k}) \rceil \geq 2^{k-1}$.

However, we do not know whether the dependence of the upper bound above on $n$ is crucial or not:
\begin{question}
Is there an upper bound of $\deg(f)$ of the form $2^{\poly(A)}\poly(B)$ (independent from $n$)?
Or, is there a sequence $S_{k}=\{(a_{k,i},b_{k,i})\}_{i=1}^{n_{k}}$ satisfying the following?: $n_{k},B_k = k^{O(1)}$, $A_{k}= O(\log(k))$, and each $S_{k}$ has a minimal polynomial $f_{k}$ with $\deg(f_{k}) = 2^{k^{\Omega(1)}}$.

\end{question}

Furthermore, the known algorithms require exponential time to decide whether a given $S$ has a minimal polynomial or not.
Moreover, because of the exponential degree upper bound (Theorem \ref{discreteupperbound}), the binary representation of the degree of a minimal polynomial is bounded by $\poly(n,A,B)$.
Therefore, the following are interesting:

\begin{question}
    Is there an algorithm to decide whether a given $S$ has a minimal polynomial or not in $\poly(n,A,B)$-time?
\end{question}

\begin{question}
     Given $S$ having a minimal polynomial, can we find the degree of the minimal polynomial in $\poly(n,A,B)$-time?
\end{question}


\section{Acknowledgement}
    The authors would like to thank Takahiro Ueoro and Masahiro Kurisaki for helpful discussions at the early stage of our research. 
    Further, we deeply appreciate anonymous referees' leading our attention to \cite{degreeupperboundforsparseinterpolation} and \cite{Interpolationbypolynomialswithnonnegativecoefficients}.
    We are also grateful to Kazuhiro Yokoyama for suggesting us to consider degree upper bounds for minimal polynomials, Pavel Hrube\v{s} for his sincere comments on the first version of our draft. 
    We also thank Yuki Ishihara, Yoshiyuki Sekiguchi, Toshiyasu Arai, Zden\v{e}k Strako\v{s}, and Mykyta Narusevych for their comments.

    This work is supported by JSPS KAKENHI Grant Number 22J22110 and 22J22505. The first author was supported by Grant-in-Aid for JSPS Fellows and FMSP, WINGS Program, the University of Tokyo. The second author was also supported by Grant-in-Aid for JSPS Fellows and FoPM, WINGS Program, the University of Tokyo.



\appendix




\end{document}